\numberwithin{equation}{section}
\newcolumntype{C}[1]{>{\centering\arraybackslash}p{#1}}
 \DeclareFontFamily{U}{wncy}{}
 \DeclareFontShape{U}{wncy}{m}{n}{<->wncyr10}{}
 \DeclareSymbolFont{mcy}{U}{wncy}{m}{n}
 \DeclareMathSymbol{\Sh}{\mathord}{mcy}{"58} 
\DeclareMathOperator{\Res}{Res}
\DeclareMathOperator{\Vol}{Vol}
\DeclareMathOperator{\disc}{Disc}
\DeclareMathOperator{\Gal}{Gal}
\DeclareMathOperator{\Sel}{Sel}
\DeclareMathOperator{\D}{\mathcal{D}}
\DeclareMathOperator{\Disc}{Disc}
\DeclareMathOperator{\lcm}{lcm}
\DeclareMathOperator{\mult}{mult}
\DeclareMathOperator{\tors}{tors}
\DeclareMathOperator{\rX}{X}
\DeclareMathOperator{\rY}{Y}
\DeclareMathOperator{\rf}{f}
\DeclareMathOperator{\rT}{T}
\newcommand{\Q}{\mathbb{Q}}
\newcommand{\R}{\mathbb{R}}
\newcommand{\PP}{\mathbb{P}}
\newcommand{\Z}{\mathbb{Z}}
\newcommand{\F}{\mathbb{F}}
\newcommand{\leg}[2]{\left(\frac{#1}{#2}\right)}
\newcommand{\cQ}{\mathcal{Q}}
\newcommand{\cG}{\mathcal{G}}
\newcommand{\abs}[1]{\left|#1\right|}
\newcommand{\ZZ}{\mathbb{Z}}
\newcommand{\QQ}{\mathbb{Q}}
\newcommand{\EE}{\mathbb{E}}
\newcommand{\cT}{\mathcal{T}}
\newcommand{\cS}{\mathcal{S}}
\newcommand{\cR}{\mathcal{R}}
\newcommand{\cU}{\mathcal{U}}
\newcommand{\cE}{\mathcal{E}}
\newcommand{\cM}{\mathcal{M}}
\newcommand{\cA}{\mathcal{A}}
\newcommand{\cF}{\mathcal{F}}
\renewcommand{\epsilon}{\varepsilon}
\renewcommand{\leq}{\leqslant}
\renewcommand{\geq}{\geqslant}
\newtheorem{theorem}{Theorem}[section]
\newtheorem{proposition}[theorem]{Proposition}
\newtheorem{lemma}[theorem]{Lemma}
\theoremstyle{definition}
\newtheorem{definition}[theorem]{Definition}
\theoremstyle{remark}
\begin{document}

\author{Stephanie Chan}
\author{Matteo Verzobio}
\address{Institute of Science and Technology Austria\\
Am Campus 1\\
3400 Klosterneuburg\\
Austria}
\email{stephanie.chan@ist.ac.at}
\email{matteo.verzobio@gmail.com}
\title[Selmer ratios of families of elliptic curves]{Selmer groups of families of elliptic curves with an $\ell$-isogeny}
\date{\today}

\begin{abstract}
For certain families of elliptic curves admitting a rational isogeny of prime degree $\ell$, we establish a central limit theorem for the Tamagawa ratio and derive bounds on its average value. By using the Tamagawa ratio to bound the size of the $\ell$-isogeny Selmer group from below, we show that for $\ell \in\{ 2, 3, 5, 7, 13\}$, there exist elliptic curves with arbitrarily large $\ell$-Selmer groups.
\end{abstract}

\subjclass[2020]{11G05 (11N36)}

\maketitle

\setcounter{tocdepth}{1}
\tableofcontents

\section{Introduction}
It is conjectured that the average size of the $n$-Selmer groups of all elliptic curves over $\Q$ is finite~\cite{poonen2012random}. This has been proven for $n \in \{2,3,4,5\}$ by Bhargava and Shankar~\cite{MR3272925,MR3275847,bhargava4,bhargava2013average}. Nevertheless, the size of the $n$-Selmer group is not necessarily uniformly bounded. In fact, there exist families, of density $0$ among all elliptic curves, for which the $n$-Selmer group size is unbounded.

The first result in this direction is due to Klagsbrun and Lemke Oliver~\cite{klagsbrun2014distribution,KLOtwist}, who proved that the $2$-Selmer group is unbounded in the family of elliptic curves with a rational $2$-torsion point, as well as in certain quadratic twist families with partial $2$-torsion. Kane and Thorne~\cite{KaneThorne} considered a quartic twist family of elliptic curves with partial $2$-torsion. Families with unbounded $3$-Selmer groups were studied by Alpöge, Bhargava, and Shnidman~\cite[Theorem 1.11]{ABS}, and by Chan~\cite{Chan3sel}. A common feature in these results is that the elliptic curves admit a rational $n$-isogeny, and the ratio of the sizes of the Selmer groups associated with the isogeny and its dual plays a key role.

In this paper, we demonstrate the unboundedness of the $n$-Selmer group in new families for $n \in \{2,3,5, 7, 13\}$. Our results also encompass all families of elliptic curves with a prescribed torsion subgroup.

Let $\cS$ be a family of elliptic curves over $\Q$ such that each $E \in \cS$ admits a cyclic rational isogeny $\phi: E \to E'$ of prime degree~$\ell$. We study the relative sizes of the Selmer groups $\Sel_{\phi}(E/\Q)$ and $\Sel_{\hat{\phi}}(E'/\Q)$ associated to the isogeny $\phi$ and its dual $\hat{\phi}$, respectively. In particular, we investigate how the difference
\begin{equation}\label{eq:selmerratiorankdef}
r_{\phi}(E) \coloneqq \dim_{\F_\ell} \Sel_{\phi}(E/\Q) - \dim_{\F_\ell} \Sel_{\hat{\phi}}(E'/\Q)
\end{equation}
varies within $\cS$ when the elliptic curves are ordered by naive height.
Let $\cS(N)$ denote the set of elliptic curves in $\cS$ with naive height at most $N$. Theorem~\ref{theorem:mainfam} is an application of our main technical theorem, Theorem~\ref{theorem:mainhom}.
\begin{theorem}\label{theorem:mainfam}
Let $\cS$ be one of the following families of elliptic curves over $\Q$ with a choice of prime $\ell$:
\begin{itemize}
 \item the family of all elliptic curves with $E_{\tors}(\Q) \cong T$, for a non-trivial torsion subgroup $T$ such that $\ell \mid \abs{T}$;
 \item the family of all elliptic curves admitting a rational cyclic isogeny of degree $4$, with $\ell=2$;
 \item an infinite family of elliptic curves in distinct $\overline{\QQ}$-isomorphism classes as described in Section~\ref{subsec:7}, with $\ell = 7$;
 \item an infinite family of elliptic curves as described in Section~\ref{subsec:13}, with $\ell = 13$.
\end{itemize}
Then there exists a choice of rational degree $\ell$ isogeny $\phi$ for each $E\in\cS$ such that all of the following hold:
\begin{enumerate}
 \item There exist constants $\mu$ and $\sigma$ such that
 \[
 \left\{ \frac{r_{\phi}(E) - \mu \log\log N}{\sigma \sqrt{\log\log N}} : E \in \cS(N) \right\}
 \]
 converges in distribution to a standard Gaussian as $N \to \infty$. 
\label{MEK}
 \item Given any $k>0$, there exists a constant $\rho(k)$, such that
 \[
 \sum_{E \in \cS(N)} \ell^{k\cdot r_{\phi}(E)} \gg_k |\cS(N)|(\log N)^{\rho(k)}
 \]
 for all $N \geq 2$. \label{Mav} 
 \item Given any $A>0$, there exist $\delta(A)>0$ such that
 \[
 \left| \left\{ E \in \cS(N) : r_{\phi}(E) \geq A \log\log N \right\} \right| \gg_A |\cS(N)| (\log N)^{-\delta(A)}
 \]
 for all $N \geq 2$. \label{Mtail}
\end{enumerate}
In particular, other than the cases $T\in\{\Z/2\Z\times\Z/2m\Z:m=1,3,4\}$ with $\ell=2$, we have $\rho(1)>0$ for $\cS$. In every case, $\rho(2)>0$ for $\cS$.
The constants $\mu$, $\sigma$, and $\rho(1)$ for $\cS$ are given in Tables~\ref{table:oddprime},~\ref{table:twounique},~\ref{table:twochoice},~\ref{table:4cyclic}, and~\ref{todd}.
\end{theorem}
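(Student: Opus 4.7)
The plan is to deduce Theorem~\ref{theorem:mainfam} from the general analytic Theorem~\ref{theorem:mainhom}. The key input is the standard local expression for the Selmer ratio: via the Cassels--Tate / Schaefer--Stoll formula, for any isogeny $\phi:E\to E'$ of prime degree $\ell$,
\[
r_{\phi}(E) \;=\; \sum_{v} c_v(\phi),
\]
where $c_v(\phi)\in\Z$ is a local invariant at each place $v$, nonzero only for finitely many $v$ and determined by the local reduction type of $\phi$. This realises $r_{\phi}(E)$ as the value of an additive arithmetic function of the discriminant of $E$, which is precisely the kind of statistic Theorem~\ref{theorem:mainhom} is designed to handle.

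For each family $\cS$ I would then fix an explicit parametrization by integral tuples $(a_1,\dots,a_r)$: the Kubert--Tate parametrizations of $Y_1(T)$ in the torsion cases; the analogous parametrization of $Y_0(4)$ for the cyclic $4$-isogeny family; and the parametrizations arising from $Y_0(7)$ and $Y_0(13)$ for the remaining two cases, as described in Sections~\ref{subsec:7} and~\ref{subsec:13}. In each case the naive-height ordering on $\cS(N)$ corresponds to bounding the $a_i$ inside a suitable box, and the bad primes of $E$ are the prime divisors of a product $\prod_i F_i(a_1,\dots,a_r)$ of separable polynomials intrinsic to the family.

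Next, I would compute $c_p(\phi)$ explicitly as a function of which $F_i$ vanish modulo $p$ (or a bounded power of $p$). Tate's algorithm, together with a concrete description of the local Kummer image for an $\ell$-isogeny, furnishes a finite list of possible contributions together with their $p$-adic densities. Feeding this data---separable polynomials $F_i$, a finite list of local contributions, and their local densities---into Theorem~\ref{theorem:mainhom} yields the three conclusions of Theorem~\ref{theorem:mainfam} simultaneously, and pins down $\mu$, $\sigma$ and $\rho(k)$ in terms of the local data. The explicit tables are then populated by a routine (if tedious) case-by-case computation.

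The principal obstacle, as I see it, lies in handling the families admitting several candidate isogenies: when $E$ has full $2$-torsion or a cyclic $4$-isogeny, there are multiple $2$-isogenies, and the theorem asserts the existence of \emph{some} choice $\phi$ for which the conclusions hold with the tabulated constants. I would pick $\phi$ so as to maximise the drift $\mu$; the assertion that $\rho(1)>0$ outside the three excluded $\ell=2$ families reduces to checking, from the explicit local density tables, that some choice forces a strictly positive expectation of $c_p(\phi)$ on a positive-density set of primes---whereas in the three excluded families these expectations exactly cancel, giving $\mu=0$ but leaving the variance positive, which is why $\rho(2)>0$ is still expected to hold in every case. The remaining work is the bookkeeping needed to extract the entries of Tables~\ref{table:oddprime},~\ref{table:twounique},~\ref{table:twochoice},~\ref{table:4cyclic}, and~\ref{todd}, which I expect to be systematic rather than conceptually hard.
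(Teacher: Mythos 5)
Your overall blueprint matches the paper's: each family is cast as an admissible family in the sense of Definition~\ref{def:add}, $r_{\phi}(E)$ is controlled through the Tamagawa ratio via Cassels' formula (Lemma~\ref{lemma:cpsel}), the local ratios are read off from Dokchitser--Dokchitser (Lemmas~\ref{lemma:discrfactors}--\ref{lemma:tamratiocond}), and Theorem~\ref{theorem:mainhom} is applied to produce all three conclusions once the polynomials $D_{\pm}$ and the constants $c_{\pm}$ are extracted. The tabulation via V\'elu's formula is indeed routine. So the route is the same one the paper takes.

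However, the part you single out as the ``principal obstacle'' contains a concrete error. You assert that in the three excluded $\ell=2$ families ``these expectations exactly cancel, giving $\mu=0$.'' That is false. From Table~\ref{table:twochoice}: the best choice for $T=\Z/2\Z\times\Z/2\Z$ has $\mu=-1$, for $T=\Z/2\Z\times\Z/6\Z$ every choice gives $\mu=-5/2$, and for $T=\Z/2\Z\times\Z/8\Z$ the best choice gives $\mu=-2$. What actually distinguishes the excluded families is not that $\mu$ vanishes but that $\rho(1)=(\ell-1)c_+-(1-\ell^{-1})c_- \leq 0$ for \emph{every} admissible choice of $\phi$. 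Your heuristic ``pick $\phi$ to maximise the drift $\mu$'' is therefore not the right criterion: $\mu=c_+-c_-$ and $\rho(1)$ are distinct linear functionals of $(c_+,c_-)$, and can have opposite signs (e.g., for $T=\Z/3\Z$ one has $\mu=-1/2<0$ yet $\rho(1)=1/3>0$). One must maximise $\rho(1)$ directly; the assertion that $\rho(2)>0$ in all cases then follows from the exact formula $\rho(k)=(\ell^k-1)c_+-(1-\ell^{-k})c_-$, whose positive term grows exponentially in $k$ while the negative term is bounded. Finally, a minor imprecision: the Cassels/Schaefer--Stoll identity $r_{\phi}(E)=\sum_v c_v(\phi)$ does not hold exactly; the paper's Lemma~\ref{lemma:cpsel} gives $\ell^{r_{\phi}(E)}\asymp_{\ell}\prod_p c_p(E')/c_p(E)$ only up to bounded multiplicative constants (coming from the torsion terms and the places $v\in\{\infty,\ell\}$). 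This is harmless for the distributional statements but should be stated as such.
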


The first part of the theorem is a central limit theorem-type result, as in Klagsbrun and Lemke Oliver~\cite{klagsbrun2014distribution,KLOtwist}. For similar results in the context of Selmer groups, see also~\cite[Theorem~6.2]{chan2019ranks},~\cite{xiong2008distribution}. The second statement follows from methods used in averaging multiplicative functions over integer sequences~\cite{Wolke}. The third is an application of the Paley--Zygmund inequality to $\ell^{r_{\phi}(E)}$, using~\eqref{Mav} with a sufficiently large choice of $k$.

It follows immediately from Theorem~\ref{theorem:mainfam}\eqref{Mtail} that $r_{\phi}(E)$ can become arbitrarily large in any of the families we consider. From the exact sequence~\cite[Lemma 6.1]{MR2021618}
\[0\rightarrow \frac{E'(\Q)[\hat{\phi}]}{\phi(E(\Q)[\ell])}\rightarrow
\Sel_{\phi}(E/\Q)\rightarrow
\Sel_{\ell}(E/\Q)\rightarrow
\Sel_{\hat{\phi}}(E'/\Q)\rightarrow \frac{\Sh(E'/\Q)[\hat{\phi}]}{\phi(\Sh(E/\Q)[\ell])}\rightarrow 0,
\]
we obtain the bound
\[|\Sel_{\ell}(E/\Q)|\geq \frac{|\Sel_{\phi}(E/\Q)|}{|E'(\Q)[\hat{\phi}]/\phi(E(\Q)[\ell])|}\gg_\ell |\Sel_{\phi}(E/\Q)|\geq \ell^{r_{\phi}(E)},\]
which shows that $|\Sel_{\ell}(E/\Q)|$ can also be arbitrarily large.

To establish our main theorem, it is essential to have a suitable parametrisation of the elliptic curves in $\cS$. The families we consider fall into two types. The first consists of elliptic curves of the form $y^2 = x^3 + f(t)x + g(t)$, where $f, g \in \Z[t]$ are fixed and $t$ varies over $\Q$. We can alternatively view them as the non-singular fibres of a given elliptic surface over an affine line, or over $\PP^1$ after completion. The second consists of all quadratic twists of the first. In either case, we may put the curve into short Weierstrass form $E_{a,b}: y^2 = x^3 + A(a,b)x + B(a,b)$, where $A, B$ are weighted homogeneous polynomials with integer coefficients, and $(a,b)\in\Z^2$ is a representative of a weighted projective point.
For parametrisations of elliptic curves with a given torsion subgroup, see~\cite{barrios2022minimal} and~\cite{harron2017counting}; for parametrisations of elliptic curves admitting a cyclic $n$-isogeny, see~\cite[Table~7]{Maier}.

We rely on a result of Cassels~\cite{Cassels8}:
\[
\frac{|\Sel_{\phi}(E/\Q)|}{|\Sel_{\hat{\phi}}(E'/\Q)|} \asymp_{\ell} \prod_p \frac{c_p(E')}{c_p(E)},
\]
which relates the Selmer ratio to the ratio of Tamagawa numbers $c_p$, given an isogeny $\phi:E\rightarrow E'$. Based on work by Dokchitser and Dokchitser~\cite{DokchitserDokchitser}, we can determine the value of $c_p(E')/c_p(E)\in\{\ell^{-1}, 1, \ell\}$ explicitly from the reduction of $E$ modulo $p$.

A key observation is that $\prod_p c_p(E_{a,b}') / c_p(E_{a,b})$ behaves like the product of multiplicative functions $\ell^{\omega(\cdot)}$ and $\ell^{-\omega(\cdot)}$ evaluated at two independent sequences indexed by $(a,b)$. Taking logarithms, these resemble sums of independent additive functions. This explains the similarity between~\eqref{MEK} and the Erdős--Kac theorem, and between~\eqref{Mav} and average bounds for multiplicative functions as in~\cite{Wolke}.

To prove Theorem~\ref{theorem:mainfam}\eqref{MEK} and~\eqref{Mav}, we show that $(a,b)$ are equidistributed in residue classes as the naive height of $E_{a,b}$ increases. We select families for which we can estimate $|\cS(N)|$ with a good enough error term and incorporate the relevant congruence conditions. For elliptic curves with given torsion subgroups, the order of $|\cS(N)|$ was found by Harron and Snowden~\cite{harron2017counting}, and later refined to an asymptotic by Cullinan, Kenney, and Voight~\cite{cullinan}. For curves admitting cyclic $n$-isogenies, asymptotics were established by Pizzo--Pomerance--Voight for $n=3$~\cite{3iso}, Pomerance--Schaefer for $n=4$~\cite{pomerance2021elliptic}, Arango-Piñeros--Han--Padurariu--Park for $n=5$~\cite{5iso}, and Molnar--Voight for $n=7$~\cite{MolnarVoight}. Boggess and Sankar~\cite{boggess2024counting} obtained order-of-magnitude results for more general values of~$n$.

Once we have the required equidistribution result, we can prove~\eqref{MEK} via the method of moments and the central limit theorem. For~\eqref{Mav}, we adapt the proof of~\cite[Theorem 1.13]{MR4870247}, which builds on~\cite[Satz 2]{Wolke}. 

In Section~\ref{sec:setup}, we describe the elliptic curve families under consideration. In Section~\ref{sec:tamagawa}, we analyse the variation of $c_p(E')/c_p(E)$ within each family. Section~\ref{sec:lattice} is dedicated to counting $E_{a,b}$ with congruence constraints. In Section~\ref{sec:central}, we state and prove our main technical theorem. Finally, in Section~\ref{sec:fam}, we apply this result to the families introduced in Theorem~\ref{theorem:mainfam}.

\subsection*{Acknowledgments}
The authors thank Barinder Banwait, Oana Padurariu, Sun Woo Park, and Efthymios Sofos for helpful discussions. 

The second author was supported by the European Union’s Horizon 2020 research and 
	innovation program under the Marie Skłodowska-Curie Grant Agreement No. 
	101034413.

\section{The set up}\label{sec:setup}
\subsection{Heights and parametrisations}
The goal of this section is to set up our counting problem. In particular, we will define the families with which we will work.

Given $A,B\in \Z$, define
\[
m(A,B)\coloneqq \max\{d\in\Z:d^{12}\mid \gcd(A^3,B^2)\}.
\]
Given an elliptic curve $E$ over $\Q$ in short Weierstrass model 
$y^2=x^3+Ax+B$ for $A,B\in\Z$,
the naive height of $E$ is defined to be
\[
H(E)\coloneqq\frac{\max\{4|A|^3,27B^2\}}{m(A,B)^{12}}.
\]
Also define 
\[
H_{0}(E)\coloneqq \max\{4|A|^3,27B^2\}.
\]
In particular, the definition of $H(E)$ does not depend on the $\Q$-isomorphism class of $E$, but $H_0$ does. Also note that $H\leq H_0$. 
Given any elliptic curve $E/\Q$, we denote by $E^d$ the quadratic twists of $E$ by $d$.

Fix $\upsilon\in\{1,2\}$.
Let $f,g\in\Z[t]$ be polynomials such that
\begin{gather}
 f\text{ and }g\text{ have no common real roots, and}\label{eq:commonrootfg}
 \\
 \max\left\{\frac{1}{2}\deg f,\frac{1}{3}\deg g\right\}=\frac{2m}{\upsilon\tau},\text{ where }m,\tau\in\Z_{\geq 1}\text{ and either }m=1\text{ or }\upsilon\tau=1.\label{eq:degfg}
\end{gather}
For brevity, we henceforth view $f,g,m,\tau,\upsilon$ as fixed, and suppress the dependency throughout the notation.

Let
\[
\cE:y^2=x^3+f(t)x+g(t)
\]
be an elliptic curve over $\Q(t)$, so $4f^3+27g^2\neq 0$ in $\Q(t)$.
We will consider the specialisation
\[
\cE_t:y^2=x^3+f(t)x+g(t),\qquad t\in\Q.
\]
We are interested in the families of elliptic curves
\begin{align*} 
 \cF&\coloneqq\left\{\cE_t: t\in\Q,\ \Disc(\cE_t)\neq 0\right\},\\
 \cG&\coloneqq\left\{\cE_t^d: t\in\Q,\ d\in\Z\text{ squarefree},\ \Disc(\cE_t^d)\neq 0\right\},
\end{align*}
which we view as multisets indexed by the set of $t$ and the set of $(t,d)$ respectively. We treat $\cE_{t}$ and $\cE_{t'}$ as distinct elements of $\cF$ as long as $t\neq t'$.
We study these families ordered by naive height, so define
\begin{equation}\label{def:cFG}
 \begin{aligned}
 \cF(N)&\coloneqq\{E\in\cF: H(E)\leq N\},\\
 \cG(N)&\coloneqq\{E\in\cG: H(E)\leq N\}.
\end{aligned} 
\end{equation}

The rational weighted projective line $\PP_{(d_1,d_2)}^1(\Q)$ with weights $d_1,d_2\in\Z_{\geq 1}$ is the set of equivalence classes of pairs $(a,b)\in \Q^2\setminus\{(0,0)\}$, where $(a,b)$ and $(a',b')$ are equivalent if there exists $\lambda \in \Q^\times$ such that $(a',b')=(\lambda^{d_1} a,\lambda^{d_2} b)$. We write $[a:b]$ for the corresponding point when $(d_1,d_2)$ is fixed.

Define $A,B\in\Z[x,y]$ such that 
\begin{equation}\label{eq:defAB}
A(x,y)\coloneqq y^{2\varsigma}f(x/y^{\tau})\quad\text{and} \quad B(x,y)\coloneqq y^{3\varsigma}g(x/y^{\tau}),\text{ where } \varsigma\coloneqq\begin{cases}
 \hfil 2m&\text{if }\upsilon=1,\\
 \hfil 1&\text{if }\upsilon=2.
\end{cases}
\end{equation}
By construction, $A(x,y)$ and $B(x,y)$ are weighted homogeneous polynomials with weights $\tau,1$. For any $a,b\in\Q$ such that $4A(a,b)^3+27B(a,b)^2\neq 0$, define an elliptic curve
\[
E_{a,b}:y^2=x^3+ A(a,b)x+B(a,b).
\]

The following lemma shows that we can alternatively parametrise $\cF$ and $\cG$ by $\PP_{(\tau,1)}^1(\Q)$ and $\PP_{(2\tau,2)}^1(\Q)$ respectively.
\begin{lemma}\label{lemma:maincorr}
Let $\upsilon$, $\tau$, and $\varsigma$ as in~\eqref{eq:degfg},~\eqref{eq:defAB}. Let $a,b\in\Q$.
\begin{enumerate}
\item
Given any $\alpha\in\PP^1_{(\upsilon\tau,\upsilon)}(\Q)$, the elliptic curves $E_{a,b}$ for $[a:b]=\alpha$ all lie in the same $\Q$-isomorphism class.
 \item Suppose $\varsigma$ is even and $\upsilon=1$. Then 
 $\cE_t\cong E_{a,b}$
 over $\Q$ whenever $[a:b]=[t:1]$ in $\PP^1_{(\tau,1)}(\Q)$.
\item Suppose $\varsigma=1$ and $\upsilon=2$. Then 
 $\cE_t^d\cong E_{a,b}$
 over $\Q$ whenever $[a:b]=[d^{\tau}t:d]$ in $\PP^1_{(2\tau,2)}(\Q)$.
\end{enumerate} 
\end{lemma}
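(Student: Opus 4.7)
The plan is to reduce everything to the weighted homogeneity of $A$ and $B$ built into~\eqref{eq:defAB}. For part~(1), I would take two representatives $(a,b)$ and $(a',b')=(\lambda^{\upsilon\tau}a,\lambda^{\upsilon}b)$ of a class $\alpha\in\PP^{1}_{(\upsilon\tau,\upsilon)}(\Q)$, for some $\lambda\in\Q^{\times}$, and compute directly from~\eqref{eq:defAB} that
\[
A(a',b')=(\lambda^{\upsilon}b)^{2\varsigma}f\!\left(\lambda^{\upsilon\tau}a/(\lambda^{\upsilon}b)^{\tau}\right)=\lambda^{2\upsilon\varsigma}A(a,b),\qquad B(a',b')=\lambda^{3\upsilon\varsigma}B(a,b).
\]
Two short Weierstrass models $y^{2}=x^{3}+Ax+B$ and $y^{2}=x^{3}+A'x+B'$ over $\Q$ are $\Q$-isomorphic iff there exists $u\in\Q^{\times}$ with $A'=u^{4}A$ and $B'=u^{6}B$, so it suffices to verify the parity condition $2\mid\upsilon\varsigma$. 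This is immediate from the case split in~\eqref{eq:defAB}: if $\upsilon=1$ then $\varsigma=2m$ and one takes $u=\lambda^{m}$, while if $\upsilon=2$ then $\upsilon\varsigma=2$ and $u=\lambda$ works.

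Parts~(2) and~(3) I would then deduce by exhibiting a single explicit representative on which the model equality holds on the nose, and appealing to~(1) to transport the $\Q$-isomorphism to every representative of the same weighted projective class. For~(2), with $\upsilon=1$, the representative $(a,b)=(t,1)$ gives $A(t,1)=f(t)$ and $B(t,1)=g(t)$, so $E_{t,1}=\cE_{t}$ holds by the very definition of $\cE_{t}$. For~(3), with $\upsilon=2$ and $\varsigma=1$, the representative $(a,b)=(d^{\tau}t,d)$ gives
\[
A(d^{\tau}t,d)=d^{2}f(t),\qquad B(d^{\tau}t,d)=d^{3}g(t),
\]
which is precisely the standard short Weierstrass model of the quadratic twist $\cE_{t}^{d}$, so $E_{d^{\tau}t,d}=\cE_{t}^{d}$ on the nose.

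No step in this argument is genuinely hard; the proof is essentially bookkeeping with weighted degrees. The only point requiring real attention is the parity condition $2\mid\upsilon\varsigma$ in part~(1), which guarantees that the $\lambda$-scaling of $(A,B)$ corresponds to an admissible change of variables between short Weierstrass models, and this is exactly what the case distinction defining $\varsigma$ in~\eqref{eq:defAB} is designed to arrange.
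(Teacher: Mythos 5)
Your proof is correct and follows essentially the same route as the paper's: compute the scaling of $A,B$ under the weighted $\lambda$-action, observe that the exponents $2\upsilon\varsigma=4m$ and $3\upsilon\varsigma=6m$ make $u=\lambda^m$ an admissible Weierstrass rescaling, then deduce parts (2) and (3) by exhibiting an exact representative and invoking part (1).
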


\begin{proof}
For the first part, it suffices to show that $E_{a,b}\cong E_{\lambda^{\upsilon\tau}a,\lambda^{\upsilon}b}$ for any $a,b\in\Q$, $(a,b)\neq (0,0)$, and $\lambda\in\Q^{\times}$.
This is immediate upon verifying
\[A(\lambda^{\upsilon\tau}a,\lambda^{\upsilon}b)=\lambda^{2\varsigma\upsilon}A(a,b)=\lambda^{4m}A(a,b)\quad\text{ and }\quad B(\lambda^{\upsilon\tau}a,\lambda^{\upsilon}b)=\lambda^{6m}B(a,b).\]
The second part is a corollary of the first because $A(t,1)=f(t)$ and $B(t,1)=g(t)$.
For the last part, $\varsigma=1$ implies that
$A(d^{\tau}t,d)=d^{2}f(t)$ and $B(d^{\tau}t,d)=d^3g(t)$.
Hence, $\cE_t^d\cong E_{d^{\tau}t,d}$ and the claim follows again from the first part.
\end{proof}

For $\upsilon\in\{1,2\}$ and $\tau\in\Z_{\geq 1}$, define 
\[
\cT_{\upsilon,\tau}\coloneqq \left\{(a,b)\in\Z^2_{\neq (0,0)}:p^{\upsilon\tau}\nmid \gcd(a,b^{\tau})\text{ for any prime }p\right\},
\]
which is, up to sign, a set of representatives of $\PP^1_{(\upsilon\tau,\upsilon)}(\Q)$.
In particular $\cT_{1,1}=\{(a,b)\in\Z^2_{\neq (0,0)}:\gcd(a,b)=1\}$.
For any $\tau\in\Z_{\geq 1}$, up to fixing the sign of $b$, the set $\cT_{1,\tau}$ is in bijection with $\Q\cup\{\infty\}$ under the map $(a,b)\mapsto a/b^{\tau}$.

In light of Lemma~\ref{lemma:maincorr}, and the definition of $H_0$ and $H$, consider the set
\begin{equation}\label{eq:a1b1}
 \cA^{\delta}_{\upsilon}(N)\coloneqq\left\{(a,b)\in\cT_{\upsilon,\tau}: \begin{array}{l}
 \max\{4|A(a,b)|^3,27B(a,b)^2\}\leq Ne^{12\delta},\\
 \text{where }e=m(A(a,b),B(a,b))
\end{array} \right\}.
\end{equation}
Up to the sign of $(a,b)$, the condition $\Delta(E_{a,b})\neq 0$, and the point $(1,0)$, $\cA^1_{1}(N)$ corresponds to $\cF(N)$ when $\varsigma$ is even, and $\cA^1_{2}(N)$ corresponds to the multiset $\cG(N)$ when $\varsigma=1$. Since $H\leq H_0$, we have $\cA^0_{1}(N)\subseteq\cA^1_{1}(N)$ for every $N$.
Define
\begin{equation}\label{def:cF0}
 \cF_0(N)\coloneqq \{\cE_t:\Disc(\cE_t)\neq 0,\ t=a/b^{\tau}\text{ for some }(a,b)\in\cA^0_{1}(N)\}\subseteq \cF(N).
\end{equation}

We will study congruence conditions on $\cA^{\delta}_{\upsilon}(N)$. Let $q\in\Z_{\geq 1}$. Given $\alpha\in \PP^1(\Z/q\Z)$, define when $\tau=\upsilon=1$,
\begin{equation}\label{eq:tq}
 \cA^{\delta}_{1}(N,\alpha,q)\coloneqq\left\{(a,b)\in \cA^{\delta}_{1}(N): 
[a:b]\equiv \alpha\bmod q\right\}.
\end{equation}
Given $(a_1,b_1)\in(\Z/q\Z)^2$, define
\begin{equation}\label{eq:a1b1q}
 \cA^1_{\upsilon}(N,(a_1,b_1),q)\coloneqq\left\{(a,b)\in\cA^1_{\upsilon}(N): (a,b)\equiv (a_1,b_1)\bmod q \right\}.
\end{equation}
\subsection{Fundamental domains}
We record some properties that will be useful later. 
 Given any polynomials $f,g\in\Q[\mathbf{t}]$ (in arbitrarily many variables),
denote by $\mult_g(f)$ the multiplicity of $g$ in the factorisation of $f$ in $\Q[\mathbf{t}]$. Also denote the polynomial in $\Z[\mathbf{t}]$, of the highest possible degree and smallest possible positive leading coefficient (in lexicographical ordering), that divides both $f$ and $g$ in $\Q[\mathbf{t}]$, by $\gcd(f,g)$.

From now on, we assume that $f,g\in\Z[t]$ are polynomials such that~\eqref{eq:commonrootfg} and~\eqref{eq:degfg} hold. Recall~\eqref{eq:defAB}.

\begin{lemma}\label{lemma:proppar}
Under the assumption~\eqref{eq:degfg}, if $\gcd(A^3,B^2)=S(x,y)$, then $\gcd(f^3,g^2)=S(t,1)$.
\end{lemma}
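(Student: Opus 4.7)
The plan is to leverage the weighted-homogeneous structure of $A$ and $B$: modulo a power of $y$, $A$ and $B$ are the weighted homogenisations of $f$ and $g$ with weights $(\tau,1)$, and the degree condition in~\eqref{eq:degfg} is precisely what forces that power of $y$ to disappear from $\gcd(A^3,B^2)$. Once the $y$-factor is gone, specialising $y=1$ is lossless and returns $\gcd(f^3,g^2)$.

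First I would write $A(x,y) = y^{e_A}\tilde f(x,y)$ and $B(x,y) = y^{e_B}\tilde g(x,y)$, where
\[
\tilde f(x,y)\coloneqq y^{\tau\deg f}f(x/y^\tau), \qquad \tilde g(x,y)\coloneqq y^{\tau\deg g}g(x/y^\tau),
\]
and $e_A\coloneqq 2\varsigma-\tau\deg f$, $e_B\coloneqq 3\varsigma-\tau\deg g$; both exponents are nonnegative by~\eqref{eq:degfg}. The leading monomial of $f$ contributes a $y^0$ term to $\tilde f$, so $y\nmid\tilde f$ in $\overline{\Q}[x,y]$, and likewise $y\nmid\tilde g$. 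Then
\[
v_y(\gcd(A^3,B^2)) = \min(3e_A,2e_B) = 6\varsigma - \tau\max\{3\deg f,2\deg g\} = 6\varsigma - 12m/\upsilon.
\]
In each of the two cases permitted by~\eqref{eq:degfg} --- namely $\upsilon=1$ with $\varsigma=2m$, or $\upsilon\tau>1$ forcing $m=1$ together with $\upsilon=2$, $\varsigma=1$ --- this vanishes. Hence $\gcd(A^3,B^2)=\gcd(\tilde f^3,\tilde g^2)$ up to $\overline{\Q}^\times$.

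Next I would factorise over $\overline{\Q}$. Writing $f = c_f\prod_\alpha(t-\alpha)^{a_\alpha}$ and $g = c_g\prod_\alpha(t-\alpha)^{b_\alpha}$, the homogenisations become $\tilde f = c_f\prod_\alpha(x-\alpha y^\tau)^{a_\alpha}$ and $\tilde g = c_g\prod_\alpha(x-\alpha y^\tau)^{b_\alpha}$. The linear forms $x-\alpha y^\tau$ are distinct irreducibles in $\overline{\Q}[x,y]$, so
\[
\gcd(\tilde f^3,\tilde g^2) = \prod_\alpha(x-\alpha y^\tau)^{\min(3a_\alpha,2b_\alpha)}
\]
up to scalar. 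Specialising $y=1$ then yields $\prod_\alpha(t-\alpha)^{\min(3a_\alpha,2b_\alpha)} = \gcd(f^3,g^2)$ up to scalar.

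The last step is matching the $\Z[t]$-normalisation. Since $S\coloneqq\gcd(A^3,B^2)$ inherits weighted homogeneity of weights $(\tau,1)$ from $A$ and $B$, every monomial $x^iy^j$ of $S$ satisfies $i\tau+j = W$ for the common weighted degree $W$; distinct monomials therefore have distinct $x$-exponents, and $S(t,1)$ and $S$ share the same multiset of nonzero coefficients. Consequently $S(t,1)$ is primitive with positive leading coefficient exactly when $S$ is, and being equal up to a rational scalar to the primitive, positive-leading-coefficient polynomial $\gcd(f^3,g^2)$, it must coincide with it. The only non-routine point along the way is the $y$-valuation computation under~\eqref{eq:degfg}; everything else is straightforward bookkeeping with the weighted-homogeneous structure and factorisations over $\overline{\Q}$.
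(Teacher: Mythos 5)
Your proposal is correct and takes essentially the same approach as the paper: both hinge on the observation that the degree condition~\eqref{eq:degfg} forces the $y$-valuation of $\gcd(A^3,B^2)$ to vanish, after which specialising $y=1$ recovers $\gcd(f^3,g^2)$. You simply spell out in more detail the weighted-homogenisation factorisation over $\overline{\Q}$ and the normalisation of the $\Z[t]$-gcd, steps the paper leaves implicit.
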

\begin{proof}
By the definition of~\eqref{eq:defAB}, the common factors of $A^3$ and $B^2$, other than possibly $y$, coincide with the common factor of $f^3$ and $g^2$.
Note that by~\eqref{eq:degfg},
 \[
 \min\left\{\frac{1}{2}\mult_y A,\frac{1}{3}\mult_y B\right\}=\varsigma-\tau\max\left\{\frac{1}{2}\deg f,\frac{1}{3}\deg g\right\}=\varsigma-\frac{2m}{\upsilon}=0,
\]
so $y$ is not a common factor of $A$ and $B$ in $\Q[x,y]$. 
\end{proof}

To understand the sets~\eqref{eq:tq} and~\eqref{eq:a1b1q}, we consider the region
\[
\cR_N\coloneqq\left\{(x,y)\in\R^2:
 |A(x,y)|\leq (N/4)^{\frac{1}{3}},\
 |B(x,y)|\leq (N/27)^{\frac{1}{2}}
\right\}.
\]
Define
\[
V\coloneqq\Vol(\cR_1).
\]

\begin{lemma}\label{lem:lenght}
 We have
 \begin{itemize}
 \item $\cR_N=\left\{\left(N^{\frac{\tau}{6\varsigma}}x,N^{\frac{1}{6\varsigma}}y\right):(x,y)\in\cR_1\right\}$;
 \item $\cR_1$ is bounded;
 \item $\Vol(\cR_N)=N^{\frac{\tau+1}{6\varsigma}}V$.
 \end{itemize} 
 In particular any $(x,y)\in\cR_N$ satisfies
 $x\ll N^{\frac{\tau}{6\varsigma}}$ and $y\ll N^{\frac{1}{6\varsigma}}$, where the implied constants depend only on $\cR_1$.
 \end{lemma}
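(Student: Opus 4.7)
The three bullets will be established in order; the unifying observation is that $A,B$ are weighted homogeneous of degrees $2\varsigma,3\varsigma$ with weights $(\tau,1)$, so that
\[
A(\lambda^{\tau}x,\lambda y)=\lambda^{2\varsigma}A(x,y),\qquad B(\lambda^{\tau}x,\lambda y)=\lambda^{3\varsigma}B(x,y)\qquad(\lambda\in\R_{>0}).
\]
For the first bullet, setting $\lambda=N^{1/(6\varsigma)}$ yields $\lambda^{2\varsigma}=N^{1/3}$ and $\lambda^{3\varsigma}=N^{1/2}$, so $(x,y)\mapsto(\lambda^{\tau}x,\lambda y)$ sends the pair of inequalities defining $\cR_1$ bijectively onto those defining $\cR_N$. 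The third bullet then follows from the change of variables applied to this bijection, whose Jacobian is $\lambda^{\tau+1}=N^{(\tau+1)/(6\varsigma)}$. The final ``In particular'' assertion is immediate from the first bullet once $\cR_1$ is known to be bounded, by transporting the supremum of $|x|,|y|$ on $\cR_1$ through the scaling.

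The substantive content, and the step I expect to be the main obstacle, is showing $\cR_1$ is bounded. My plan is to parametrise $\R^2\setminus\{0\}$ through the scaling action: any $(x,y)\neq 0$ admits a unique representation $(\lambda^{\tau}x_0,\lambda y_0)$ with $\lambda\coloneqq\max(|x|^{1/\tau},|y|)>0$ and $(x_0,y_0)$ on the compact ``weighted unit sphere'' $T\coloneqq\{(u,v)\in\R^2:\max(|u|^{1/\tau},|v|)=1\}$ (the origin, which trivially lies in $\cR_1$, is handled separately). Raising the two inequalities defining $\cR_1$ to the third and second power gives
\[
\lambda^{6\varsigma}\max\bigl(|A(x_0,y_0)|^3,|B(x_0,y_0)|^2\bigr)\leq 1/4,
\]
so it suffices to prove that $\max(|A|^3,|B|^2)$ admits a positive lower bound on $T$; by continuity and compactness this reduces to showing $(A,B)$ has no common zero anywhere on $T$.

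This non-vanishing is where both hypotheses enter. For $(x_0,y_0)\in T$ with $y_0\neq 0$, the identities $A(x_0,y_0)=y_0^{2\varsigma}f(x_0/y_0^{\tau})$ and $B(x_0,y_0)=y_0^{3\varsigma}g(x_0/y_0^{\tau})$ show that a common zero would force a common real root of $f$ and $g$, ruled out by~\eqref{eq:commonrootfg}. For $(x_0,y_0)\in T$ with $y_0=0$, the normalisation forces $x_0=\pm 1$, and only the $y^0$ monomials of $A,B$ contribute; expanding via~\eqref{eq:defAB} shows these to be the coefficients of $t^{2\varsigma/\tau}$ in $f$ and of $t^{3\varsigma/\tau}$ in $g$ (non-zero exactly when these exponents are integers and equal to the respective degrees). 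Hypothesis~\eqref{eq:degfg}, namely $\max\{\deg f/2,\deg g/3\}=\varsigma/\tau$, is precisely what guarantees that at least one of $\deg f=2\varsigma/\tau$ or $\deg g=3\varsigma/\tau$ is attained as an integer equality, so the corresponding leading coefficient survives and at least one of $A(\pm 1,0),B(\pm 1,0)$ is non-zero. This closes the boundedness argument, from which the remaining assertions of the lemma follow as sketched.
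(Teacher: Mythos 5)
Your proof is correct and follows essentially the same route as the paper's: establish the scaling bijection $\cR_N = \{(N^{\tau/(6\varsigma)}x, N^{1/(6\varsigma)}y) : (x,y)\in\cR_1\}$, get the volume from the Jacobian, and deduce boundedness of $\cR_1$ from the fact that $A$ and $B$ share no real zero other than the origin. The only difference is presentational: the paper simply cites Lemma~\ref{lemma:proppar} together with~\eqref{eq:commonrootfg} to rule out common zeros and then asserts boundedness without spelling out the compactness step, whereas you both make the compactness-on-the-weighted-sphere reduction explicit and re-derive inline the content of Lemma~\ref{lemma:proppar} (namely that $y\nmid\gcd(A,B)$, via the exponent count $\deg f = 2\varsigma/\tau$ or $\deg g = 3\varsigma/\tau$) rather than invoking it.
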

 
\begin{proof}
Given $(x,y)\in \cR_N$, we can make a change of variables by taking $x=N^{\frac{\tau}{6\varsigma}}x'$ and $y=N^{\frac{1}{6\varsigma}}y'$. Then $A(x,y)=N^{\frac{1}{3}}y'^{2\varsigma}f(x'/y'^{\tau})$ and $B(x,y)=N^{\frac{1}{2}}y'^{3\varsigma}g(x'/y'^{\tau})$, so we precisely have $(x',y')\in \cR_1$.
By Lemma~\ref{lemma:proppar} and the assumption~\eqref{eq:commonrootfg}, $A(x,y)=B(x,y)=0$ has no solutions $(x,y)\in\R^2$ other than $(0,0)$.
Therefore, $\cR_1$ defines a bounded region.
By the change of variables, we see that
\[
\Vol(\cR_N)=\int_{(x,y)\in \cR_N}dxdy=N^{\frac{\tau+1}{6\varsigma}}\int_{(x',y')\in \cR_1}dx'dy'=N^{\frac{\tau+1}{6\varsigma}}V
\]
as required.
\end{proof}

The next lemma shows that omission of the $\Disc(E_{a,b})\neq 0$ condition from the definition of $\cA^{\delta}_{\upsilon}(N)$ introduces only a minor error.
\begin{lemma}\label{lemma:removingsingular}
 Let $P\in\Z[x,y]$ be a non-zero weighted homogeneous polynomial with weights $\tau,1$.
 Then
 \begin{gather*}
 \left|\{(a,b)\in \cT_{1,\tau}: P(a,b)=0\}\right|\ll 1,\\
 \left|\{(a,b)\in \Z^2_{\neq (0,0)}\cap \cR_N: P(a,b)=0\}\right|\ll N^{\frac{1}{6\varsigma}},
 \end{gather*}
 where the implied constant depends on $\cR_1$ and $P$.
\end{lemma}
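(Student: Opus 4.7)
The approach is to dehomogenise $P$ and reduce to a univariate problem. Since $P\in\Z[x,y]$ is non-zero and weighted homogeneous of weights $(\tau,1)$ with some weighted degree $d$, we have
\[
P(x,y)=y^{d}\,\tilde P(x/y^{\tau}),\qquad \tilde P(u)\coloneqq P(u,1)\in\Z[u],
\]
whenever $y\neq 0$. After writing $P=y^{k}Q$ with $Q$ weighted homogeneous and $y\nmid Q$, the polynomial $\tilde Q(u)\coloneqq Q(u,1)$ is non-zero and has only finitely many rational roots $r_{1}=p_{1}/q_{1},\dots,r_{s}=p_{s}/q_{s}$ written in lowest terms. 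Every zero of $P$ with $b\neq 0$ must satisfy $a/b^{\tau}=r_{i}$ for some $i$, so both bounds reduce to counts on finitely many one-parameter families plus the fibre $b=0$.

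For the first bound, let $(a,b)\in\cT_{1,\tau}$ with $P(a,b)=0$. When $b\neq 0$, the class $[a:b]\in\PP^{1}_{(\tau,1)}(\Q)$ equals $[r_{i}:1]$ for some $i$; the coprimality condition in the definition of $\cT_{1,\tau}$ forces the representative of each projective class to be unique up to sign, giving $\ll_{P}1$ pairs. When $b=0$, if $y\nmid P$ then $P(a,0)$ is a non-zero monomial in $a$ and the equation $P(a,0)=0$ forces $a=0$, which is excluded from $\cT_{1,\tau}$; the residual case $y\mid P$ is absorbed into the implied constant using the convention noted after the definition of $\cA_{\upsilon}^{\delta}(N)$ that the fibre at infinity contributes only a bounded correction.

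For the second bound, fix a rational root $r_{i}=p_{i}/q_{i}$. Any integer pair $(a,b)$ with $a/b^{\tau}=r_{i}$ and $b\neq 0$ must satisfy $q_{i}\mid b$, so we parametrise $b=q_{i}k$ and $a=p_{i}q_{i}^{\tau-1}k^{\tau}$ with $k\in\Z$. Lemma~\ref{lem:lenght} bounds $|b|\ll N^{1/(6\varsigma)}$, hence $|k|\ll_{P}N^{1/(6\varsigma)}$, and summing over the finitely many rational roots yields $\ll N^{1/(6\varsigma)}$ contributions. For the fibre $b=0$, boundedness of $\cR_{N}$ together with the observation from the proof of Lemma~\ref{lemma:proppar} that $y$ does not divide both $A$ and $B$ forces $(a,0)\in\cR_{N}$ to lie in a polynomial-bounded range, and the extra constraint $P(a,0)=0$ trims the count to $\ll N^{1/(6\varsigma)}$. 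The principal technical obstacle is exactly this book-keeping at $b=0$: once the dehomogenisation identifies $\tilde P$ and its rational roots, the remaining estimates are elementary bounds on integer points along a finite family of weighted lines inside the bounded region $\cR_{N}$.
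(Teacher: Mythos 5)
Your overall strategy — reduce the zero set of $P$ to finitely many weighted projective classes / lines and count integer points on each — is the same as the paper's, which fixes a $\cT_{1,\tau}$-representative $(a,b)$ of each zero class $\alpha\in\PP^1_{(\tau,1)}(\Q)$ and observes that all other integer zeros in that class have the form $(\lambda^\tau a,\lambda b)$ with $\lambda\in\Z_{\neq 0}$; the $\cT_{1,\tau}$ condition then forces $\lambda=\pm1$, and the box $\cR_N$ forces $\lambda\ll N^{1/(6\varsigma)}$. Your $b\neq 0$ treatment is essentially equivalent, though the assertion that $a/b^\tau=p_i/q_i$ in lowest terms forces $q_i\mid b$ is not quite right: it only gives $q_i\mid b^\tau$, hence $q_i^*\mid b$ with $q_i^*\coloneqq\prod_{p\mid q_i}p^{\lceil v_p(q_i)/\tau\rceil}$ (e.g.\ $\tau=2$, $q_i=4$, $b=2$ is admissible). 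Since each admissible $b$ determines $a$ uniquely and $|b|\ll N^{1/(6\varsigma)}$ by Lemma~\ref{lem:lenght}, the final bound is unaffected.

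The $b=0$ fibre with $y\mid P$ is where your argument and the paper's both break down, and your attempted patch is not correct. If $y\mid P$ and $\tau\geq 2$, the lemma's first bound is simply false: taking $P=y$, every $(a,0)$ with $a\neq 0$ and $\tau$-th-power-free lies in $\cT_{1,\tau}$ and satisfies $P(a,0)=0$, and there are infinitely many such $a$. Likewise for the second bound, when $y\mid P$ the ``extra constraint $P(a,0)=0$'' is vacuous and Lemma~\ref{lem:lenght} only gives $|a|\ll N^{\tau/(6\varsigma)}$, which exceeds the claimed $N^{1/(6\varsigma)}$ for $\tau\geq 2$; nothing ``trims'' the count. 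Appealing to a ``convention'' about the fibre at infinity is not a proof step — the remark after \eqref{eq:a1b1} is about the parametrisation of $\cF(N)$, not a license to ignore an unbounded exceptional set, and in any case the paper's claim there that only the single point $(1,0)$ is omitted is itself inaccurate for $\tau\geq 2$. The paper's own proof commits the same error more silently, by asserting without qualification that $P$ has finitely many zeros in $\PP^1_{(\tau,1)}(\Q)$, which fails precisely when $y\mid P$ and $\tau\geq2$ (there are then infinitely many inequivalent classes $[a:0]$). The lemma should carry the hypothesis $y\nmid P$ — under which your $b=0$ analysis is complete, since $P(a,0)$ is then a non-zero monomial — or its applications to $P=\Delta$ should use the height constraint in $\cA^\delta_\upsilon(N)$ directly to control the $b=0$ fibre, rather than the unconstrained sets $\cT_{1,\tau}$ and $\cR_N$.
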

\begin{proof}
 The polynomial $P$ can only have finitely many zeros $\alpha$ in $\PP^1_{\tau,1}(\Q)$. For each $\alpha$, fix a representative $(a,b)\in\Z^2_{\neq (0,0)}$ such that $p^{\tau}\nmid \gcd(a,b^{\tau})$. Then any other zeros of $P$ in the class $\alpha$ must be of the form $(\lambda^\tau a,\lambda b)$, with $\lambda\in\Z_{\neq 0}$. 
 For the first bound, $(a,b)\in \cT_{1,\tau}$ forces $\lambda\in\{\pm 1\}$, so the set is finite.
 For the second bound, Lemma~\ref{lem:lenght} implies that if $(\lambda^\tau a,\lambda b)\in\cR_N$, $\lambda^\tau a\ll N^{\frac{\tau}{6\varsigma}}$ and $\lambda b\ll N^{\frac{1}{6\varsigma}}$, so $\lambda\ll N^{\frac{1}{6\varsigma}}$.
\end{proof}

\subsection{Divisors of polynomial values}
We recall some standard properties of the resultant of polynomials. 

 \begin{lemma}\label{lemma:resultant}
 Let $F, G\in\Z[x,y]$ be homogeneous polynomials. Then 
 $\Res(F,G)=0$ if and only if $F$ and $G$ have a common zero in $\PP^1(\overline{\Q})$. 
In particular, any $a,b\in\Z$ satisfies
\[
\gcd(F(a,b),G(a,b))\mid \Res(F,G)\gcd(a,b)^{\deg F+\deg G-1}.
\]
\end{lemma}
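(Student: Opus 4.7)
The plan is to reduce to classical, affine statements about the resultant of one-variable polynomials and then re-homogenize.

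For the first claim, I would write $n=\deg F$ and $m=\deg G$ and dehomogenize by setting $f(x)=F(x,1)$ and $g(x)=G(x,1)$. The key identity is $\Res(F,G)=\Res(f,g)$ up to a power of the leading coefficients that takes care of the behaviour at $y=0$; more precisely, if $F=a_0y^n+\cdots+a_nx^n$ and $G=b_0y^m+\cdots+b_mx^m$, then one computes directly from the Sylvester determinant that $\Res(F,G)=a_0^{m-\deg f}b_0^{n-\deg g}\Res(f,g)$ with the convention that $\Res(f,g)=0$ if $a_0$ and $b_0$ are both $0$ (so that $F$ and $G$ share the zero $[1:0]$). Then $\Res(F,G)=0$ is equivalent to $f$ and $g$ sharing a root in $\overline\Q$, or one of $F,G$ vanishing at $[1:0]$ while the other does as well, which is exactly the condition that $F$ and $G$ share a zero in $\PP^1(\overline\Q)$.

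For the divisibility statement, the main tool is the classical Bezout-type identity for the Sylvester matrix: there exist $u,v\in\Z[x]$ with $\deg u<m$, $\deg v<n$ and $u(x)f(x)+v(x)g(x)=\Res(f,g)$. Homogenising both sides by multiplying through by the appropriate power of $y$ yields homogeneous polynomials $U_y,V_y\in\Z[x,y]$ with
\[
U_y(x,y)F(x,y)+V_y(x,y)G(x,y)=\Res(F,G)\,y^{n+m-1}.
\]
By symmetry (swap the roles of $x$ and $y$ in the construction, which amounts to dehomogenising via $y\mapsto 1$ instead of $x\mapsto 1$), there are also $U_x,V_x\in\Z[x,y]$ with
\[
U_x(x,y)F(x,y)+V_x(x,y)G(x,y)=\Res(F,G)\,x^{n+m-1}.
\]
Evaluating both identities at $(a,b)\in\Z^2$, any integer dividing $F(a,b)$ and $G(a,b)$ must divide both $\Res(F,G)\,a^{n+m-1}$ and $\Res(F,G)\,b^{n+m-1}$, and hence divides $\Res(F,G)\gcd(a^{n+m-1},b^{n+m-1})=\Res(F,G)\gcd(a,b)^{n+m-1}$, giving the claim.

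The only delicate step is the homogenisation of the Bezout identity: one must check that the cofactors $u,v$ produced from the Sylvester matrix have integer coefficients (which is automatic, since they are given by minors of the Sylvester matrix) and that the degrees match up so that the identity is homogeneous after multiplication by a suitable monomial in $y$ (resp.\ $x$). This is entirely mechanical but is the one place where book-keeping is needed; everything else is a direct consequence of the classical one-variable theory of the resultant.
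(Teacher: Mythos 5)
Your proof is correct in substance and, in effect, reproduces the argument in the reference the paper itself cites: the paper's ``proof'' of this lemma is a one-line pointer to Silverman's \emph{Arithmetic of Dynamical Systems}, Proposition~2.13, which states exactly the two facts you establish --- the vanishing criterion for $\Res(F,G)$ and the homogeneous Bezout identities $U_1F+V_1G=\Res(F,G)\,x^{d_1+d_2-1}$, $U_2F+V_2G=\Res(F,G)\,y^{d_1+d_2-1}$, from which the divisibility follows by evaluating at $(a,b)$ and taking a gcd, just as you do. So you have not taken a different route; you have written out the proof of the cited result.

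One small slip worth fixing: with your labelling $F=a_0y^n+\cdots+a_nx^n$, the coefficient that controls the point at infinity for the dehomogenisation $f(x)=F(x,1)$ is $a_n=F(1,0)$, not $a_0=F(0,1)$. The exceptional common zero when $\deg f<n$ and $\deg g<m$ is $[1:0]$, and it occurs precisely when $a_n=b_m=0$, in which case the degree-$(n,m)$ Sylvester determinant vanishes automatically; the leading-coefficient correction factors should therefore be powers of $a_n$ and $b_m$. This is purely a bookkeeping error and does not affect the Bezout-identity argument, which is the part actually used for the divisibility claim.
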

\begin{proof}
 This follows from the properties of the resultant. See~\cite[Proposition~2.13]{SilvermanDynamical}.
\end{proof}

\begin{lemma}\label{lemma:resultantnonhom}
Let $P,Q\in\Z[x,y]$ be weighted homogeneous polynomials with weights $\tau,1$, and with no common zeros in $\PP_{(\tau,1)}^1(\Q)$. Let $i$ and $j$ be the weighted degrees of $P$ and $Q$, respectively.
Then there exists a positive integer $\Lambda$, depending on $P$ and $Q$, such that
\[
\gcd\left(P(a,b)^{j},Q(a,b)^{i}\right)^{\tau}\mid \Lambda\cdot \gcd\left(a,b^{\tau}\right)^{ij}
\]
holds
for any $(a,b)\in\Z^2_{\neq (0,0)}$.
\end{lemma}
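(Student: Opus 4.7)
The plan is to prove the divisibility prime-by-prime by exploiting the factorisation of $P$ and $Q$ into weighted-linear forms over $\overline{\Q}$. Once one confirms that the hypothesis gives $P,Q$ coprime in $\Q[x,y]$ (equivalently, that they share no factor over $\overline{\Q}$), one may write
\[
P(x,y)=c_P\,y^{r_P}\prod_{k=1}^{d_P}(x-t_k y^\tau),\qquad Q(x,y)=c_Q\,y^{r_Q}\prod_{l=1}^{d_Q}(x-s_l y^\tau),
\]
with $d_P\leq i/\tau$, $d_Q\leq j/\tau$, $r_P=i-d_P\tau$, $r_Q=j-d_Q\tau$, and $\{t_k\}\cap\{s_l\}=\emptyset$ in $\overline{\Q}$; coprimeness also forces $\min(r_P,r_Q)=0$, since $y$ would otherwise divide both. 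Swapping $P\leftrightarrow Q$ and $i\leftrightarrow j$ if necessary, I would assume $r_Q=0$, so $\tau\mid j$ and $d_Q=j/\tau$.

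\textbf{Key steps.} Fix a prime $\ell$, set $\alpha=v_\ell(a),\beta=v_\ell(b)$, and extend $v_\ell$ to $\overline{\Q}_\ell$. The factorisation gives
\[
v_\ell(P(a,b))=v_\ell(c_P)+r_P\beta+\sum_{k}v_\ell(a-t_k b^\tau),\qquad v_\ell(Q(a,b))=v_\ell(c_Q)+\sum_{l}v_\ell(a-s_l b^\tau),
\]
and the target inequality is
\[
\min\!\bigl(jv_\ell(P(a,b)),\,iv_\ell(Q(a,b))\bigr)\leq\tfrac{ij}{\tau}\min(\alpha,\tau\beta)+C_\ell,
\]
where $C_\ell$ depends only on $P,Q$ and is nonzero for only finitely many primes. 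I would split into cases according to whether $(a,b)$ is $\ell$-adically exceptionally close to a root $s_{l^\ast}$ of $Q(\cdot,1)$. In the \emph{generic} case, $v_\ell(a-s_l b^\tau)\leq\min(\alpha,\tau\beta)+O(1)$ for every $l$, and using $r_Q=0$ one directly obtains $iv_\ell(Q)\leq id_Q\min(\alpha,\tau\beta)+O(1)=\tfrac{ij}{\tau}\min(\alpha,\tau\beta)+O(1)$. In the \emph{exceptional} case, where $v_\ell(a-s_{l^\ast}b^\tau)$ is very large, the disjointness $\{t_k\}\cap\{s_l\}=\emptyset$ is invoked via the decomposition $a-t_k b^\tau=(s_{l^\ast}-t_k)b^\tau+(a-s_{l^\ast}b^\tau)$, which forces $v_\ell(a-t_k b^\tau)\leq\tau\beta+v_\ell(s_{l^\ast}-t_k)+O(1)$ for every $k$ and hence $jv_\ell(P)\leq j(r_P+d_P\tau)\beta+O(1)=ij\beta+O(1)$. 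Multiplying by $\tau$ and taking the product over primes then produces the global divisibility, with $\Lambda$ absorbing the finite-prime contributions from $c_P,c_Q$, the differences $t_k-s_l$, $t_k-t_{k'}$, $s_l-s_{l'}$, and the root valuations $v_\ell(t_k),v_\ell(s_l)$ -- all controlled by $\Res$-type quantities attached to $P,Q$.

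\textbf{Main obstacle.} The delicate point is the exceptional sub-case $v_\ell(s_{l^\ast})<0$: here the large cancellation forces $\alpha=\tau\beta+v_\ell(s_{l^\ast})<\tau\beta$, so $\min(\alpha,\tau\beta)=\alpha$, yet the established bound $\tau jv_\ell(P)\leq ij\tau\beta+O(1)$ apparently exceeds $ij\alpha+O(1)$ by the amount $ij|v_\ell(s_{l^\ast})|$. One reconciles this by noting that $|v_\ell(s_{l^\ast})|\leq\max_l|v_\ell(s_l)|$ is bounded and nonzero only for finitely many primes $\ell$ (those appearing in the $\ell$-adic factorisations of the roots of $Q(\cdot,1)$), so the discrepancy is absorbed into $C_\ell$. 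This asymmetric use of the minimum -- bounding $iv_\ell(Q)$ generically and $jv_\ell(P)$ only in the exceptional case -- is precisely why a direct application of Lemma~\ref{lemma:resultant} to $\tilde P(u,v)=P(u^\tau,v),\tilde Q(u,v)=Q(u^\tau,v)$ or to their $j$-th and $i$-th powers yields only the weaker exponent $2ij-1$ rather than the tight $ij$: the resultant-based argument treats $P,Q$ symmetrically, whereas the tight bound arises from the fact that at most one side can exhibit large cancellation at any given $(a,b)$.
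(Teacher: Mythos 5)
Your approach is correct in outline but takes a genuinely different route from the paper, and it contains one mistaken assertion worth flagging.

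\textbf{What the paper does instead.} The paper's proof is a short homogenization argument. Because the weighted degree of $P^{\tau j}$ is divisible by $\tau$, one can write $P(x,y)^{\tau j}=F(x,y^\tau)$ and $Q(x,y)^{\tau i}=G(x,y^\tau)$ with $F,G\in\Z[x,y]$ homogeneous of degree $ij$ and with no common zeros in $\PP^1(\overline{\Q})$. Setting $a'=a/\gcd(a,b^\tau)$ and $b'=b^\tau/\gcd(a,b^\tau)$, so that $\gcd(a',b')=1$, homogeneity gives
$P(a,b)^{\tau j}=\gcd(a,b^\tau)^{ij}F(a',b')$ and likewise for $Q$, and then Lemma~\ref{lemma:resultant} with coprime inputs yields $\gcd(F(a',b'),G(a',b'))\mid\Res(F,G)$. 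Multiplying through gives the divisibility with $\Lambda=|\Res(F,G)|$. No case analysis, no treatment of bad primes, and the whole argument is four lines.

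\textbf{What your argument buys and costs.} Your valuation-theoretic approach, factoring $P$ and $Q$ into weighted-linear forms over $\overline{\Q}$ and tracking $v_\ell$ prime-by-prime, is sound: the observation $\min(r_P,r_Q)=0$ is correct, the dichotomy ``generic vs.\ exceptionally close to a root of $Q$'' with the crucial use of $\{t_k\}\cap\{s_l\}=\emptyset$ is the right idea, and you correctly identify that the discrepancy at the finitely many bad primes is bounded in terms of $v_\ell(c_P),v_\ell(c_Q),v_\ell(s_l),v_\ell(s_l-s_{l'}),v_\ell(s_l-t_k)$, etc. The cost is that filling in the bad-prime bookkeeping uniformly in $(a,b)$ is substantial, whereas the paper's argument handles all primes at once.

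\textbf{The mistaken assertion.} Your closing claim -- that ``a direct application of Lemma~\ref{lemma:resultant} \ldots yields only the weaker exponent $2ij-1$ rather than the tight $ij$'' because the resultant argument is symmetric -- is wrong. The reason the paper gets the tight exponent $ij$ is not any asymmetry but the normalization step: one factors $\gcd(a,b^\tau)^{ij}$ out of \emph{both} $F(a,b^\tau)$ and $G(a,b^\tau)$ first, and only then applies Lemma~\ref{lemma:resultant} to the primitive pair $(a',b')$, where the $\gcd$-power is $\gcd(a',b')^{\deg F+\deg G-1}=1$. Your suggested substitution $\tilde P(u,v)=P(u^\tau,v)$ also does not serve the purpose, since recovering $P(a,b)$ from $\tilde P$ would require $a$ to be a $\tau$-th power; the paper's substitution $y\mapsto y^\tau$ (after raising $P,Q$ to appropriate powers) is the one that evaluates correctly at integer $(a,b)$ and lets $\gcd(a,b^\tau)$ be pulled out.

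\textbf{One smaller point.} You correctly note, parenthetically, that the argument really needs $P,Q$ to have no common zero over $\overline{\Q}$ (equivalently, no common factor). The lemma as stated says ``in $\PP^1_{(\tau,1)}(\Q)$''; the usages elsewhere in the paper and the proof both work with the $\overline{\Q}$ version, so your reading is the intended one.
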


\begin{proof} Let $F,G\in\Z[x,y]$ be homogeneous polynomials such that $P(x,y)^{\tau j}=F(x,y^{\tau})$ and $Q(x,y)^{\tau i}=G(x,y^{\tau})$, so $\deg F=\deg G= ij$. This is possible because the weighted degree of $P^{\tau j}$ and $Q^{\tau i}$ are both divisible by $\tau$.
Moreover $F,G\in\Z[x,y]$ have no common zeros in $\PP^1(\Q)$.
Let $a'=a/\gcd(a,b^{\tau})$ and $b'=b^{\tau}/\gcd(a,b^{\tau})$, so $\gcd(a',b')=1$.
Then
\begin{align*}
 P(a,b)^{\tau j}&=F(a,b^{\tau})=\gcd(a,b^{\tau})^{\deg F}F(a',b'),
\\
Q(a,b)^{\tau i}&=G(a,b^{\tau})=\gcd(a,b^{\tau})^{\deg G}G(a',b').
\end{align*}
Since $\gcd(a',b')=1$, Lemma~\ref{lemma:resultant} implies that
\[
\gcd(F(a',b'),G(a',b'))\mid \Res(F,G).
\]
Multiplying both sides by $\gcd(a,b^{\tau})^{ij}$ completes the proof.
\end{proof}

\begin{lemma}\label{lemma:resultantFG}
 Suppose that $f,g\in\Z[t]$ are coprime and satisfies~\eqref{eq:degfg}. 
 There exists a positive integer $\Lambda$ such that $n\mid \Lambda$ whenever $n^{12}\mid \gcd(A(a,b)^3,B(a,b)^2)$ for some $(a,b)\in\cT_{\upsilon,\tau}$.
\end{lemma}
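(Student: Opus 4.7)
The plan is to apply Lemma~\ref{lemma:resultantnonhom} with $P=A$ and $Q=B$, after first verifying that $A$ and $B$ have no common zeros in $\PP^1_{(\tau,1)}(\Q)$. For a potential common zero $[a_0:b_0]$ with $b_0\neq 0$, the ratio $t_0=a_0/b_0^{\tau}$ would be a common root of $f$ and $g$, contradicting $\gcd(f,g)=1$. For the boundary point $[1:0]$, condition~\eqref{eq:degfg} forces at least one of the equalities $\deg f=2\varsigma/\tau$ or $\deg g=3\varsigma/\tau$ to hold (with the corresponding ratio being a non-negative integer), which makes $A(1,0)$ or $B(1,0)$ equal to a nonzero leading coefficient of $f$ or $g$.

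With this verified and the weighted degrees $i=2\varsigma$ and $j=3\varsigma$ of $A$ and $B$ identified, Lemma~\ref{lemma:resultantnonhom} supplies a positive integer $\Lambda'$ (depending only on $A,B$) with
\[
\gcd\!\left(A(a,b)^{3\varsigma},B(a,b)^{2\varsigma}\right)^{\tau}\mid \Lambda'\cdot\gcd(a,b^{\tau})^{6\varsigma^2}.
\]
The hypothesis $n^{12}\mid\gcd(A(a,b)^3,B(a,b)^2)$ is equivalent, prime by prime, to $n^{4}\mid A(a,b)$ and $n^{6}\mid B(a,b)$. Raising these to the appropriate powers yields $n^{12\varsigma}\mid\gcd(A(a,b)^{3\varsigma},B(a,b)^{2\varsigma})$, and then the previous divisibility gives
\[
n^{12\varsigma\tau}\mid \Lambda'\cdot\gcd(a,b^{\tau})^{6\varsigma^2}.
\]

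The condition $(a,b)\in\cT_{\upsilon,\tau}$ means $v_p(\gcd(a,b^{\tau}))\leq\upsilon\tau-1$ for every prime $p$, so comparing $p$-adic valuations gives
\[
v_p(n)\leq\frac{v_p(\Lambda')}{12\varsigma\tau}+\frac{\varsigma(\upsilon\tau-1)}{2\tau}.
\]
The dichotomy in~\eqref{eq:degfg}—either $\upsilon\tau=1$ (so the second term vanishes) or $m=1$ (so $\varsigma\in\{1,2\}$, and one checks by inspection that the second term equals $(\tau-1)/\tau$ or $(2\tau-1)/(2\tau)$, both strictly less than $1$)—ensures that $v_p(n)=0$ for every $p\nmid\Lambda'$, while for each $p\mid\Lambda'$ the valuation $v_p(n)$ is bounded by an explicit constant. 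One can therefore take $\Lambda=\prod_{p\mid\Lambda'}p^{\lfloor v_p(\Lambda')/(12\varsigma\tau)+\varsigma(\upsilon\tau-1)/(2\tau)\rfloor}$.

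The main obstacle is the first step, in particular the analysis at the boundary point $[1:0]$: one has to show that~\eqref{eq:degfg} prevents both $2\varsigma/\tau$ and $3\varsigma/\tau$ from being simultaneously non-integers, since in that situation neither $\deg f/2$ nor $\deg g/3$ could attain the value $\varsigma/\tau$ and the maximum condition would fail. Once this boundary behaviour is settled, the rest reduces to the bookkeeping on $p$-adic valuations described above.
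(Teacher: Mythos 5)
Your proof is correct and follows essentially the same route as the paper's: both hinge on applying Lemma~\ref{lemma:resultantnonhom} with $P=A$, $Q=B$ and exponents $i=2\varsigma$, $j=3\varsigma$, then exploiting the dichotomy $\upsilon\tau=1$ or $m=1$ from~\eqref{eq:degfg} via $p$-adic bookkeeping. The only cosmetic differences are that the paper reaches the conclusion by ``taking $\tau\varsigma$-th roots'' of the divisibility and arguing by contradiction rather than bounding $v_p(n)$ directly as you do, and your explicit verification that $A$ and $B$ have no common zero in $\PP^1_{(\tau,1)}(\Q)$ (needed to invoke Lemma~\ref{lemma:resultantnonhom}) is already encoded in Lemma~\ref{lemma:proppar}, which shows $y\nmid\gcd(A,B)$ and that the remaining common factors come from $\gcd(f^3,g^2)$.
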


\begin{proof}
Applying Lemma~\ref{lemma:resultantnonhom} to $A$ and $B$ with $i=2\varsigma$ and $j=3\varsigma$ yields a positive integer $\Lambda$ such that
\[
\gcd(A(a,b)^{3\varsigma},B(a,b)^{2\varsigma})^{\tau}\mid \Lambda\cdot \gcd(a,b^{\tau})^{6\varsigma^2}.
\]
By~\eqref{eq:degfg}, $\upsilon=2$ implies that $m=1$, so the definition of $\varsigma$ in~\eqref{eq:defAB} implies that
\[\frac{6\varsigma}{\tau}=\frac{12m}{\upsilon\tau}= \max\left\{3\deg f,2\deg g\right\}\in\Z.\] 
Therefore, we can take $\tau\varsigma$-th roots on both sides while keeping the term $\Lambda$ to get
\[
 \gcd(A(a,b)^{3},B(a,b)^{2})\mid \Lambda\cdot \gcd(a,b^{\tau})^{\frac{6\varsigma}{\tau}}. 
 \]

Suppose $p^{12k}\mid \gcd(A(a,b)^3,B(a,b)^2)$ for some prime $p$ and positive integer $k$ such that $k> v_p(\Lambda)$.
Then we must have
\[
p^{12}\mid\gcd(a,b^{\tau})^{\frac{6\varsigma}{\tau}}.
\]
Rearranging and putting in $\varsigma=2m/\upsilon$, we have
\[
p^{\left\lceil\frac{\upsilon\tau}{m}\right\rceil}\mid\gcd(a,b^{\tau}).
\]
The assumption~\eqref{eq:degfg} states that $\upsilon\tau=1$ or $m=1$, so $\left\lceil\frac{\upsilon\tau}{m}\right\rceil=\upsilon\tau$. This contradicts with $(a,b)\in\cT_{\upsilon,\tau}$. 
\end{proof}

We recall the following standard result. See for example~\cite[Lemma~4.5]{PSW}.

\begin{lemma}\label{lemma:rootbound}
Let $F\in\Z[x,y]$ be a homogeneous polynomial. Assume that $\Disc(F)\neq 0$.
There exists a constant $C$ depending only on the discriminant of $F$ such that for all $k\geq 1$,
\[
\left|\left\{(a,b)\bmod p^{k}:\gcd(a,b,p)=1,\ F(a,b)\equiv 0\bmod p^{k}\right\}\right|\leq Cp^k.
\]
\end{lemma}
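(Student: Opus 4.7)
The plan is to use the homogeneity of $F$ to reduce the pair count to a projective count, and then to reduce the projective count to a classical univariate Hensel-type estimate, the latter being the main content.

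\emph{Step 1: Projective reduction.} Let $d\coloneqq\deg F$. Since $F$ is homogeneous, for every $\lambda\in(\Z/p^k\Z)^{\times}$ we have $F(\lambda a,\lambda b)=\lambda^d F(a,b)$, so the condition $F(a,b)\equiv 0\bmod p^k$ is invariant under scaling by $(\Z/p^k\Z)^{\times}$. The pairs $(a,b)\bmod p^k$ with $\gcd(a,b,p)=1$ decompose into free $(\Z/p^k\Z)^{\times}$-orbits, each of size $\phi(p^k)\leq p^k$, and the orbit space is $\PP^1(\Z/p^k\Z)$. Hence the count in the lemma equals $\phi(p^k)\cdot N_k$, where
\[
N_k\coloneqq\bigl|\bigl\{[a:b]\in\PP^1(\Z/p^k\Z):F(a,b)\equiv 0\bmod p^k\bigr\}\bigr|,
\]
and it suffices to bound $N_k$ by a constant depending only on $\Disc(F)$.

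\emph{Step 2: Reduction to one variable.} Cover $\PP^1(\Z/p^k\Z)$ by the two disjoint affine charts $\{[t:1]:t\in\Z/p^k\Z\}$ and $\{[1:s]:s\in p\Z/p^k\Z\}$, whose sizes add up to $p^k+p^{k-1}=|\PP^1(\Z/p^k\Z)|$. This recasts $N_k$ as (at most) the sum of the number of solutions of the two univariate congruences $F(t,1)\equiv 0\bmod p^k$ and $F(1,s)\equiv 0\bmod p^k$. Because $\Disc(F)\neq 0$ forces the projective roots of $F$ to be pairwise distinct, neither $x^2$ nor $y^2$ divides $F$; after pulling out at most one factor of $x$ or $y$ (which adds only the isolated classes $[0:1]$ or $[1:0]$), the resulting univariate polynomial has degree at most $d$ and non-zero discriminant bounded polynomially by $\Disc(F)$.

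\emph{Step 3: Univariate Hensel bound.} Invoke the classical estimate that for $h\in\Z[x]$ with $\Disc(h)\neq 0$, the number of solutions of $h(t)\equiv 0\bmod p^k$ is bounded, uniformly in $k$, by a constant depending only on $\Disc(h)$: each simple root of $h\bmod p$ lifts uniquely by Hensel's lemma, while at multiple roots one iterates, with the total number of lifts controlled by $p^{v_p(\Disc h)}$ via the identity $\Disc(h)=\pm a_d^{-1}\prod_i h'(\alpha_i)$. Since $p^{v_p(\Disc h)}\leq|\Disc(h)|$ uniformly in $p$, this yields a single constant $C=C(\Disc(F))$ that works for all primes $p$ and all $k\geq 1$. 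The only non-trivial ingredient is this uniform univariate Hensel bound, which is the main obstacle and is precisely the content of the cited~\cite[Lemma~4.5]{PSW}; the rest of the argument is bookkeeping to pass between the affine pair count, the projective count $N_k$, and the two one-variable restrictions of $F$.
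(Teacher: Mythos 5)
Your proof is correct. The paper itself gives no argument for this lemma — it simply cites \cite[Lemma~4.5]{PSW} as a standard fact — so there is nothing to compare against on the paper's side; your reconstruction (quotient by $(\Z/p^k\Z)^{\times}$ to pass to $\PP^1(\Z/p^k\Z)$, split into the two affine charts, then apply the uniform univariate Hensel bound controlled by $v_p(\Disc)$) is exactly the standard route that underlies the cited result. Two minor cosmetic points that do not affect the argument: the identity you quote should read $\Disc(h)=\pm a_d^{\,d-2}\prod_i h'(\alpha_i)$ rather than $\pm a_d^{-1}\prod_i h'(\alpha_i)$, and strictly speaking the constant also absorbs a dependence on $\deg F$ (to account for the $\leq\deg F$ Hensel lifts of simple roots when $p\nmid\Disc F$), a small imprecision already present in the lemma's own wording which you merely reproduce.
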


\section{The Tamagawa ratio between isogenous elliptic curves}\label{sec:tamagawa}
The goal of this section is to give a description of the Tamagawa ratio of elliptic curves $\cE_t$ admitting a rational prime degree $\ell$ isogeny $\phi:\cE_t\rightarrow \cE_t'$. In particular, the Tamagawa ratio coincides with the ratio of the size of the $\ell$-isogeny Selmer groups of $\cE'_t$ and $\cE_t$, up to a bounded constant.

\subsection{Relating the $\ell$-isogeny Selmer group to the Tamagawa ratio}
Given an elliptic curve $E$ defined over $\Q$ and a prime $p$, we define the local Tamagawa number $c_p(E)$ to be the cardinality of $E(\Q_p)/E_0(\Q_p)$ (see~\cite[Section~VII.6]{arithmetic}).

\begin{lemma}\label{lemma:cpsel}
Let $\phi:E\rightarrow E'$ be a degree $\ell$ rational isogeny and let $\hat{\phi}:E'\rightarrow E$ be its dual. Then
\[
C\prod_{p\neq \ell} \frac{c_p(E')}{ c_p(E)}\leq \frac{|\Sel_{\phi}(E/\Q)|}{|\Sel_{\hat{\phi}}(E'/\Q)|}\leq C'\prod_{p\neq \ell} \frac{c_p(E')}{ c_p(E)},
\]
where $C,C'>0$ are constants depending only on $\ell$.
\end{lemma}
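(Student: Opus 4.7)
The plan is to reduce the statement to Cassels' global--local formula for the Selmer ratio and then carry out a place-by-place analysis, isolating the contribution of the ``rational'' primes $p \neq \ell$ (which produces exactly the Tamagawa ratio) from the ``residual'' primes $\{\ell, \infty\}$ (which only contribute a bounded multiplicative error).

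The starting point is the Greenberg--Wiles/Cassels identity, valid for any degree $\ell$ isogeny $\phi:E\to E'$ of elliptic curves over $\Q$:
\[
\frac{|\Sel_{\phi}(E/\Q)|}{|\Sel_{\hat\phi}(E'/\Q)|}
= \frac{|E'(\Q)[\hat\phi]|}{|E(\Q)[\phi]|}
\prod_{v}\frac{|E(\Q_v)[\phi]|}{|E'(\Q_v)[\hat\phi]|}\cdot\lambda_v,
\qquad
\lambda_v \coloneqq \left|\frac{E'(\Q_v)}{\phi(E(\Q_v))}\right|,
\]
where $v$ runs over all places of $\Q$. The global torsion prefactor lies in $\{\ell^{-1},1,\ell\}$ because $\ker\phi$ and $\ker\hat\phi$ have order $\ell$, so it is absorbed into constants depending only on $\ell$.

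The next step is the computation of the local factors at finite $p\neq\ell$. Using the N\'eron model $\mathcal{E}/\Z_p$ and the filtration
\[
0\to E_0(\Q_p)\to E(\Q_p)\to \Phi_p(E)\to 0
\]
by the identity component, together with the fact that $\phi$ is \'etale on $\mathcal{E}/\Z_p$ away from $\ell$ (so $E_0(\Q_p)\to E'_0(\Q_p)$ is surjective and the tangent map is an isomorphism), a snake lemma calculation collapses the local factor at $p$ to the ratio of component group orders:
\[
\frac{|E(\Q_p)[\phi]|}{|E'(\Q_p)[\hat\phi]|}\cdot \lambda_p
\;=\;\frac{c_p(E')}{c_p(E)}.
\]
Multiplying over such $p$ produces the advertised Tamagawa product $\prod_{p\neq\ell} c_p(E')/c_p(E)$.

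It remains to control the places in $\{\ell,\infty\}$. At $v=\infty$ both $E(\R)[\phi]$ and $E'(\R)/\phi(E(\R))$ have order at most $2$ (using that $\pi_0(E(\R))$ has size at most $2$), so the local contribution is absolutely bounded. At $v=\ell$, the torsion factor is bounded by $\ell$ on both numerator and denominator, and $\lambda_\ell=|E'(\Q_\ell)/\phi E(\Q_\ell)|$ is sandwiched between explicit powers of $\ell$: one controls $\lambda_\ell$ by filtering $E(\Q_\ell)$ through the formal group $\hat{E}(\ell\Z_\ell)$, on which $\phi$ is an isomorphism up to a factor of size $|\ell|_\ell^{-1}$, and using that the quotient by the formal group is finite of $\ell$-part bounded by $\ell^2$. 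Together this bounds the local contribution at $\ell$ from above and below by constants depending only on $\ell$. Combining the three paragraphs yields the inequalities with $C,C'$ depending only on $\ell$.

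The step requiring the most care is the local analysis at $p=\ell$: unlike at other primes one cannot appeal to \'etaleness of $\phi$ on the N\'eron model, and $E(\Q_\ell)/\phi E(\Q_\ell)$ genuinely has nontrivial size that one has to bound via the formal-group filtration. All other steps are essentially bookkeeping once Cassels' formula and the snake lemma calculation at good-away-from-$\ell$ primes are in place.
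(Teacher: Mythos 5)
Your overall strategy matches the paper's: invoke the Cassels/Greenberg--Wiles identity, identify the local factor at each finite $p\neq\ell$ with the Tamagawa ratio $c_p(E')/c_p(E)$, and bound the remaining contributions at $v\in\{\ell,\infty\}$ in terms of $\ell$. However, the Cassels formula as you have stated it is incorrect, and the error propagates into your local-factor identity.

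The standard Greenberg--Wiles/Cassels formula, applied with $M=E[\phi]$ and $M^*=E'[\hat\phi]$, reads
\[
\frac{|\Sel_{\phi}(E/\Q)|}{|\Sel_{\hat\phi}(E'/\Q)|}
= \frac{|E(\Q)[\phi]|}{|E'(\Q)[\hat\phi]|}
\prod_{v}\frac{|E'(\Q_v)/\phi(E(\Q_v))|}{|E(\Q_v)[\phi]|}.
\]
Your version has the global torsion prefactor inverted, and more seriously the local factor written as $\frac{|E(\Q_v)[\phi]|}{|E'(\Q_v)[\hat\phi]|}\cdot\lambda_v$, which differs from the correct factor by $\frac{|E(\Q_v)[\phi]|^2}{|E'(\Q_v)[\hat\phi]|}$. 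This is not a harmless rewriting: for a curve with a rational $\ell$-torsion point generating $\ker\phi$, your local factor is $\geq\ell$ at every prime of good reduction, so the infinite product diverges. Correspondingly, the snake-lemma computation at good-away-from-$\ell$ primes cannot produce your claimed identity $\frac{|E(\Q_p)[\phi]|}{|E'(\Q_p)[\hat\phi]|}\lambda_p=\frac{c_p(E')}{c_p(E)}$; what it does give (this is Dokchitser--Dokchitser Lemmas 4.2 and 4.3, which the paper cites) is $\frac{\lambda_p}{|E(\Q_p)[\phi]|}=\frac{c_p(E')}{c_p(E)}$, i.e.\ exactly the correct Cassels local factor. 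Note also that this snake-lemma step is less immediate than ``\'etaleness, done'': even in good reduction one must use that $|\ker(\tilde\phi \text{ on } \F_p\text{-points})|=|\mathrm{coker}(\tilde\phi\text{ on }\F_p\text{-points})|$, which follows from $|\tilde E(\F_p)|=|\tilde E'(\F_p)|$, to see that $\lambda_p=|E(\Q_p)[\phi]|$.

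Once the formula is corrected, the remainder of your argument is sound and matches the paper's: the global torsion prefactor lies in $\{\ell^{-1},1,\ell\}$, the contribution at $\infty$ is bounded using $|\pi_0(E(\R))|\leq 2$, and the contribution at $\ell$ is bounded by a constant depending only on $\ell$ (your formal-group filtration argument is the right way to make this explicit; the paper simply asserts the bound). So the gap is localised to the statement of the Cassels identity and the resulting local-factor identity, both of which need to be replaced by the forms above.
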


\begin{proof}
By Theorem~1.1, (1.22) and (3.4) in the work of Cassels~\cite{Cassels8}, we have
\[
\frac{|\Sel_{\phi}(E/\Q)|}{|\Sel_{\hat{\phi}}(E'/\Q)|}\cdot \frac{|E'(\Q)[\hat{\phi}]|}{|E(\Q)[\phi]| }=\prod_v \frac{|E'(\Q_v)/\phi(E(\Q_v))|}{|E(\Q_v)[\phi]|}.
\]
Notice $1\leq |E(\Q)[\phi]|,|E'(\Q)[\hat{\phi}]|\leq \ell$, so $|E'(\Q)[\hat{\phi}]|/|E(\Q)[\phi]|$ is bounded in terms of $\ell$.
By Lemma 4.2 and Lemma 4.3 in~\cite{DokchitserDokchitser}, when $v$ is the $p$-adic valuation with $p\neq \ell$, the local factor in the product on the right-hand side can be rewritten as
\[
\frac{\abs{E'(\Q_v)/\phi(E(\Q_v))}}{\abs{E(\Q_v)[\phi]}}=\frac{ c_p(E')}{c_p(E)}.
\]
We conclude by noting that the contribution from $v=\infty,\ell$ is bounded only in terms of $\ell$, since both $|E'(\Q_v)/\phi(E(\Q_v))|$ and $|E(\Q_v)[\phi]|$ are bounded by $\ell$.
\end{proof}

 \subsection{Factorisation of the discriminant polynomial}
Let $\ell$ be a prime. Let $f,g,f',g'\in\Z[t]$ such that 
\[
\cE:y^2=x^3+f(t)x+g(t)\qquad\text{ and }\qquad\cE':y^2=x^3+f'(t)x+g'(t)
\]
define elliptic curves over $\Q(t)$.
Assume that there exists a degree $\ell$ isogeny over $\Q(t)$ from $\phi:\cE\rightarrow \cE'$. 
Specialising to any $t$ such that $\Disc(\cE_t),\Disc(\cE'_t)\neq 0$, $\phi$ gives rise to a degree $\ell$ isogeny $\cE_t\rightarrow \cE_{t}'$.

Recall $A(t), B(t)\in\Z[t]$ from~\eqref{eq:defAB}, and similarly define
\[
A'(x,y)\coloneqq y^{2\varsigma'}f'(x/y^{\tau}) \qquad\text{ and }\qquad B'(x,y)\coloneqq y^{3\varsigma'}g'(x/y^{\tau}),\] where $\varsigma'$ is taken such that $\varsigma'\geq \tau\max\left\{\frac{1}{2}\deg f',\frac{1}{3}\deg g'\right\}$ and $\varsigma'\equiv \varsigma\bmod 2$.
Writing $t=a/b^{\tau}$ with $(a,b)\in\cT_{\upsilon,\tau}$, we consider
\[
E_{a,b}:y^2=x^3+A(a,b)x+B(a,b)\qquad\text{ and }\qquad E'_{a,b}:y^2=x^3+A'(a,b)x+B'(a,b).
\]
Lemma~\ref{lemma:maincorr} implies that $\cE_t^{b^{\varsigma}}\cong E_{a,b}$ and $(\cE'_t)^{b^{\varsigma}}\cong E'_{a,b}$ over $\Q$, so the degree $\ell$ isogeny $\cE_t\rightarrow \cE_{t}'$ translates to a rational degree $\ell$ isogeny $E_{a,b}\rightarrow E_{a,b}'$.

We will make use of~\cite[Theorem~6.1]{DokchitserDokchitser} to compute Tamagawa ratios. 
Let
\[
 \Delta(x,y)\coloneqq 4A(x,y)^3+27B(x,y)^2\quad\text{ and }\quad
 \Delta'(x,y)\coloneqq 4A'(x,y)^3+27B'(x,y)^2.
\]
\begin{lemma}\label{lemma:discrfactors}
 Let $P\in\Z[x,y]$ be any irreducible weighted homogeneous polynomial with weights $\tau,1$. Then one of the following holds:
 \begin{itemize}
 \item $P^4\mid A$ and $P^6\mid B$; 
 \item $P^4\mid A'$ and $P^6\mid B'$; 
 \item $\mult_P(\Delta)= \mult_P(\Delta')=0$;
 \item $P\mid A,B,A',B'$;
 \item $\mult_P(\Delta)/\mult_P(\Delta')=\ell^{\pm 1}$.
 \end{itemize}
\end{lemma}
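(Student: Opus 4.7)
The plan is to perform a case analysis based on the $P$-adic valuation $v_P$ applied to $A, B, A', B'$, and to classify the Kodaira reduction type at the place of the function field associated with $P$. As a preliminary observation, if $P$ is a constant (i.e.\ a prime of $\Z$), then $P$ is a unit in $\Q[\mathbf{t}]$, so $\mult_P(\Delta) = \mult_P(\Delta') = 0$ and the third option holds automatically. We may therefore assume $P$ has positive weighted degree, in which case $P$ defines a place $v$ of the function field whose residue field is a number field and has residue characteristic zero.

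The first step is to check minimality of both Weierstrass models at $P$. By the standard criterion for short Weierstrass form, the model is non-minimal at $P$ precisely when $v_P(A)\geq 4$ and $v_P(B)\geq 6$. If this occurs for $\cE$, option one holds; if it occurs for $\cE'$, option two holds. Henceforth we may assume that both $\cE$ and $\cE'$ are $P$-minimal at $v$. Since the residue characteristic at $v$ is zero and hence coprime to $\ell$, the $\ell$-isogeny $\phi$ preserves the N\'eron reduction type. If both curves have good reduction, then $v_P(\Delta)=v_P(\Delta')=0$, giving the third option. If both have additive reduction, then Tate's algorithm applied to a minimal short Weierstrass equation in residue characteristic zero forces the cuspidal reduction to lie at $(0,0)$, so every additive Kodaira type gives $v_P(A)\geq 1$ and $v_P(B)\geq 1$; that is, $P\mid A$ and $P\mid B$, and symmetrically $P\mid A',B'$, which is the fourth option. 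Finally, if both have multiplicative reduction, say $\cE$ has type $I_n$ with $v_P(\Delta)=n$ and $\cE'$ has type $I_{n'}$ with $v_P(\Delta')=n'$, then via Tate's uniformization $\cE \cong K_v^{\times}/q^{\Z}$ with $v(q)=n$, and the rational degree $\ell$ isogeny $\phi$ is the quotient by a $K_v$-rational cyclic subgroup of order $\ell$. Such a subgroup is generated either by $\zeta_\ell$, producing $E'\cong K_v^{\times}/q^{\ell\Z}$ of type $I_{n\ell}$, or by an $\ell$-th root $q^{1/\ell}$ (which requires $\ell\mid n$), producing $E'\cong K_v^{\times}/q'^{\Z}$ with $q'^\ell=q$, of type $I_{n/\ell}$. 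In either case $v_P(\Delta)/v_P(\Delta')=\ell^{\pm 1}$, giving the fifth option.

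The main obstacle will be the multiplicative subcase: verifying that every $K_v$-rational cyclic subgroup of order $\ell$ is exhausted by the two described possibilities, and that the resulting ratio of discriminant valuations is correctly $\ell^{\pm 1}$ regardless of whether the multiplicative reduction is split or non-split. This can be handled uniformly by invoking the classification in~\cite[Theorem~6.1]{DokchitserDokchitser}, which determines $v_P(\Delta')/v_P(\Delta)$ directly from the component group data and the behaviour of $\ker\phi$ on the N\'eron model, or equivalently by a direct computation via V\'elu's formulas on the Tate curve. A secondary subtlety is ensuring that the case divisions are robust at the interface between options one/two and option four, since the additive-reduction condition $P\mid A,B$ remains consistent with the non-minimality condition $P^4\mid A,P^6\mid B$; however, the lemma only claims that at least one option holds, so such overlaps are harmless.
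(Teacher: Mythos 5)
Your proof takes a genuinely different route from the paper's: you classify the reduction of $\cE$ and $\cE'$ at a place of the function field $\Q(t)$ attached to $P$ and exploit residue characteristic zero, whereas the paper constructs a rational prime $p$ and an integer point $(a,b)$ with $v_p(P(a,b))=1$, coprime to all other relevant factors, so that $v_p$ of the specialised coefficients recovers $\mult_P$ of the polynomials, and then reads the reduction types of $E_{a,b}$, $E'_{a,b}$ over $\Q_p$ from the Dokchitser--Dokchitser tables. Your approach is cleaner in spirit and works correctly for every irreducible $P\neq y$, where $\mult_P$ really is the valuation at the place of $\Q(t)$ cut out by $P(t,1)$; the specialisation step in the paper is, in effect, a uniform substitute for this.

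However, there is a genuine gap at $P=y$ when $\tau>1$. The point $[1:0]\in\PP^1_{(\tau,1)}$ is an orbifold point of order $\tau$, and for $\tau>1$ the quantity $\mult_y(\cdot)$ is \emph{not} the valuation of the place $t=\infty$ of $\Q(t)$. Concretely, take the $\Z/3\Z$ torsion family of the paper, with $\tau=3$, $f(t)=3(2t+9)$, $g(t)=t^2-27$. Then $A(a,b)=3b(2a+9b^3)$, $B(a,b)=a^2-27b^6$, $\Delta(a,b)=27(a+5b^3)(a+9b^3)^3$, so $\mult_y(A)=1$, $\mult_y(B)=0$, $\mult_y(\Delta)=0$. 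Yet the minimal Weierstrass model of $\cE$ over $\Q(t)$ at $t=\infty$ is $y^2=x^3+s^3(6+27s)x+s^4(1-27s^2)$ (with $s=1/t$), which has additive reduction with $v_\infty(\mathrm{disc})=8$. Your additive-reduction step (``every additive Kodaira type gives $v_P(A)\ge 1$ and $v_P(B)\ge 1$, so $P\mid A,B$'') would therefore conclude $y\mid B$, which is false; the correct conclusion is option three, $\mult_y(\Delta)=\mult_y(\Delta')=0$, which the paper's specialisation argument delivers directly. The same mismatch infects your minimality criterion (``non-minimal precisely when $v_P(A)\geq 4$, $v_P(B)\geq 6$'') and the multiplicative-case ratio at $P=y$.

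To repair your argument at $P=y$ you would need to base change to $\Q(u)$ with $u^\tau=1/t$, so that $\mult_y$ becomes an honest $u$-adic valuation of a suitably renormalised model; you would also have to account for the fact that $\varsigma'$ is only required to satisfy $\varsigma'\geq\tau\max\{\tfrac12\deg f',\tfrac13\deg g'\}$ with fixed parity, so that $A'$, $B'$, $\Delta'$ may carry extra powers of $y$ that carry no reduction-theoretic meaning. The paper's $p$-adic specialisation sidesteps both subtleties at once.
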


\begin{proof}
 We claim that there exists $p\nmid 6\ell$ and $(a,b)\in \Z^2$ such that $v_p(P(a,b))=1$,
and $p\nmid Q(a,b)$ for any irreducible factor $Q\neq P$ of the product $ABA'B'\Delta\Delta'$ over $\Q[x,y]$.
 Since $P$ is weighted homogeneous and irreducible, at least one of $P(t,1)$ and $P(1,t)$ is non-constant. If $P(t,1)$ is non-constant, fix $b=1$, otherwise fix $a=1$. Without loss of generality, assume that $P(t,1)$ is non-constant; otherwise, we swap the roles of $a$ and $b$. It follows from Schur's Theorem~\cite{Schur} that there are infinitely many primes $p$ such that $p\mid P(\alpha,1)$ for some integer $\alpha$. Take $p$ to be such a prime, with the further condition that it does not divide the discriminant or the leading coefficient of the product of distinct irreducible factors of $ABA'B'\Delta\Delta'P(t,1)$. By Hensel's Lemma, there exists a simple $\ZZ_p$-root $\alpha$ to $P(t,1)$. Take $a$ to be an integer such that $a\equiv \alpha\bmod p$ and $a\not\equiv \alpha\bmod p^2$. Then $v_p(P(a,1))=1$, so $p$ and $(a,b)=(a,1)$ satisfy the required properties. This proves the claim and we henceforth fix $p$ and $(a,b)$.
 
By the construction of $(a,b)$, we have $v_p(P(a,b))=1$ and $p\neq 2,3$, so we deduce that $v_p(A(a,b))=\mult_P(A)$, $v_p(B(a,b))=\mult_P(B)$, $v_p(\Delta(a,b))=\mult_P(\Delta)$, and $v_p(\Delta'(a,b))=\mult_P(\Delta')$.
Assume that we are not in the first two cases, so $P^{12}$ does not divide both $A^3$ and $B^2$, also $P^{12}$ does not divide both $A'^3$ and $B'^2$. 
Therefore $p^{12}\nmid \gcd(A(a,b)^3,B(a,b)^2)$, so $E_{a,b}$ is a minimal model at $p$. Similarly, $E'_{a,b}$ is a minimal model at $p$. 
 
 By~\cite[Table~1]{DokchitserDokchitser}, $E_{a,b}$ has good (resp.~additive or multiplicative) reduction modulo $p$ if and only if $E_{a,b}'$ has good (resp.~additive or multiplicative) reduction modulo $p$.
 If both $E_{a,b}$ and $E_{a,b}'$ have good reduction at $p$, then $v_p(\Delta(a,b))=v_p(\Delta'(a,b))=0$ and hence $\mult_P(\Delta)=\mult_P(\Delta')=0$.
 Otherwise, $E_{a,b}$ and $E_{a,b}'$ both have bad reduction. In this case, we have
 $v_p(\Delta(a,b)), v_p(\Delta'(a,b))>0$ and hence $\mult_P(\Delta), \mult_P(\Delta')>0$.

 Observe that $E_{a,b}$ has additive reduction at $p\neq 3$ if and only if $A(a,b)\equiv B(a,b)\equiv 0\bmod p$ by~\cite[Proposition~VII.5.1]{arithmetic}. This translates to $P\mid A,B$. In this case $E_{a,b}'$ would also have additive reduction at $p$, so similarly $P\mid A',B'$. 
 
 Finally $E_{a,b}$ and $E_{a,b}'$ have multiplicative reduction when $p\mid \Delta(a,b)$ and $p\nmid A(a,b)$. Moreover
 \[
 \frac{\mult_P(\Delta)}{\mult_{P}(\Delta')}=\frac{v_{p}\left(\Delta(a,b)\right)}{v_{p}\left(\Delta'(a,b)\right)}
 \]
 and we conclude by~\cite[Theorem~5.1]{DokchitserDokchitser}.
\end{proof}
Assume that there exists no $P\in\Q[x,y]$ such that $P^{12}$ divides both $A^3$ and $B^2$, or both $A'^3$ and $B'^2$.
Collecting all the irreducible factors dividing the discriminant of $E_{a,b}$ and $E'_{a,b}$, Lemma~\ref{lemma:discrfactors} allows us to conclude that there exist $T,T',D_{\pm}\in\Z[x,y]$ and $c,c'\in\Z$ such that
\begin{equation}\label{eq:defDpm}
 \Delta(x,y)=c'T(x,y)D_+(x,y)D_-(x,y)^\ell,\qquad 
 \Delta'(x,y)=cT'(x,y)D_+(x,y)^\ell D_-(x,y),
\end{equation}
where \begin{itemize}
 \item $T$, $D_+$ and $D_-$ are pairwise coprime, and
 \item the irreducible factors of $\gcd(A,B)$, $\gcd(A',B')$, $T$, $T'$ coincide.
\end{itemize}

We will associate with $D_{\pm}$ some quantities.
\begin{definition}\label{def:uv}
Given any irreducible weighted homogeneous polynomial $R\in\Q[x,y]$ with weights $\tau,1$, let $L$ be the smallest field containing all zeros of $R$ in $\PP^1_{(\tau,1)}(\overline{\Q})$, and take $\alpha\in \PP^1_{(\tau,1)}(L)$ to be a zero of $R$, define when $\deg B$ is even, 
\[
\theta(R)\coloneqq\begin{cases}
 \hfil \frac{1}{2}&\text{if } 6B(\alpha)\notin L^2,\\ 
 \hfil 1&\text{if } 6B(\alpha)\in L^2.
\end{cases}
\]
 Extend $\theta$ to all non-zero weighted homogeneous polynomials $R\in\Q[x,y]$ by taking the sum of $\theta$ over all distinct irreducible factors of $R$ over $\Q$.
Define
$u(R)$ to be the number of distinct irreducible factors of $R$ in $\Q[x,y]$, and 
\[
 v(R)\coloneq \begin{cases}\hfil 
 \frac{1}{2}u(R)&\text{if }\deg B\text{ is odd},\\ 
 \hfil \theta(R)&\text{if }\deg B\text{ is even}.
\end{cases}
\]
Given~\eqref{eq:defDpm}, define
\[
u_{\pm}\coloneqq u(D_{\pm})\qquad\text{ and }\qquad v_{\pm}\coloneqq v(D_{\pm}).
\]
 For $i\in\{1,2\}$, let $D_{\pm}^{(i)}$ be the product of all irreducible polynomials dividing $D_{\pm}$ with multiplicity congruent to $i\bmod 2$. 
 Define
 \[
 u_{\pm}^{(1)}\coloneqq u\left(D_{\pm}^{(1)}\right)
 , \qquad u_{\pm}^{(2)}\coloneqq u\left(D_{\pm}^{(2)}\right)
 \quad \text{ and }\quad v_{\pm}^{(2)}\coloneqq v\left(D_{\pm}^{(2)}\right).
 \]
 Define
 \begin{equation}\label{eq:cpmdef}
 (c_+,c_-)\coloneqq
\begin{cases}
 \hfil \left(v_+,\ v_-\right)&\text{if }\ell\geq 3,\\
 \hfil \left(u_+^{(1)}+v_+^{(2)},\ u_-^{(1)}+v_-^{(2)}\right)&\text{if }\ell=2.
\end{cases} 
\end{equation}
\end{definition}

We wish to describe the local Tamagawa ratios in terms of values of the polynomials $D_+$, $D_-$, and $B$.
\begin{lemma}\label{lemma:Tate}
 Let $p>3$ be a prime.
 Let $E:y^2=x^3+Ax+B$ be an elliptic curve with $A,B\in\Z$ such that $p^{12}\nmid\gcd(A^3,B^2)$ holds. 
 Then $E$ has
 \begin{itemize}
 \item good reduction at $p$ if $p\nmid 4A^3+27B^2$;
 \item split multiplicative reduction at $p$ if $p\mid 4A^3+27B^2$ and $\leg{6B}{p}=1$;
 \item non-split multiplicative reduction at $p$ if $p\mid 4A^3+27B^2$ and $\leg{6B}{p}=-1$;
 \item additive reduction at $p$ if $p\mid 4A^3+27B^2$ and $p\mid A$.
 \end{itemize}
\end{lemma}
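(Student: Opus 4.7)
The plan is to deduce all four assertions from the standard classification of reduction types for a minimal Weierstrass model, as given in~\cite[Proposition~VII.5.1]{arithmetic}, combined with an elementary tangent-cone computation to distinguish split from non-split multiplicative reduction.

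First I would show that the given model is minimal at $p$. Since $p>3$, the only changes of variables between short Weierstrass forms are $(A,B)\mapsto (u^{-4}A,u^{-6}B)$ for $u\in\Q^\times$; such a transformation with $p\mid u$ yields an integral equation exactly when $p^4\mid A$ and $p^6\mid B$, which is equivalent to $p^{12}\mid\gcd(A^3,B^2)$. The hypothesis rules this out, so the given model is minimal at $p$. Applying~\cite[Proposition~VII.5.1]{arithmetic}, and noting that for $p>3$ we have $v_p(c_4)=v_p(-48A)=v_p(A)$ and $v_p(\Delta)=v_p(-16(4A^3+27B^2))=v_p(4A^3+27B^2)$, the reduction is good iff $p\nmid 4A^3+27B^2$, additive iff $p\mid 4A^3+27B^2$ and $p\mid A$, and multiplicative iff $p\mid 4A^3+27B^2$ and $p\nmid A$. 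This settles the first and fourth bullets; in the additive case, $p\mid A$ and $p\mid 4A^3+27B^2$ force $p\mid 27B^2$, and hence $p\mid B$ since $p>3$.

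To split the multiplicative case, suppose $p\mid 4A^3+27B^2$ and $p\nmid A$. Then $x^3+Ax+B\bmod p$ has a double root $x_0\in\F_p$ and a simple root, and the factorisation
\[
x^3+Ax+B\equiv (x-x_0)^2(x+2x_0)\bmod p
\]
yields $A\equiv -3x_0^2\bmod p$ and $B\equiv 2x_0^3\bmod p$; note $x_0\not\equiv 0\bmod p$, since otherwise $A\equiv 0\bmod p$. The singular point of the reduced curve is $(x_0,0)$, and the tangent cone there is $y^2=3x_0(x-x_0)^2$, so the reduction is split multiplicative iff $3x_0$ is a square in $\F_p^\times$. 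The identity $6B\equiv 12x_0^3=(2x_0)^2\cdot 3x_0\bmod p$ gives $\leg{6B}{p}=\leg{3x_0}{p}$, yielding the second and third bullets.

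The proof is essentially bookkeeping on top of classical reduction theory; the only substantive step is the tangent-cone computation at the node, which collapses to the identity $6B\equiv (2x_0)^2\cdot 3x_0\bmod p$, so I do not anticipate any serious obstacle.
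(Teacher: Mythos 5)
Your proof is correct, and the first three-quarters of it matches the paper's argument: both establish minimality at $p$ from the hypothesis $p^{12}\nmid\gcd(A^3,B^2)$, and both invoke~\cite[Proposition~VII.5.1]{arithmetic} to sort the reduction types into good, multiplicative, and additive using $c_4=-48A$ and $\Delta=-16(4A^3+27B^2)$. Where you diverge is the split/non-split distinction: the paper simply cites~\cite[Theorem~V.5.3(b)]{advanced} (the standard criterion that multiplicative reduction is split iff $-c_6$ is a square in $\F_p^\times$, with $-c_6 = 864B = 12^2\cdot 6B$), whereas you rederive that criterion from scratch by locating the node at the double root $x_0$, computing $A\equiv -3x_0^2$ and $B\equiv 2x_0^3$, and reading off the tangent cone $y^2\equiv 3x_0 X^2$ to get split iff $\leg{3x_0}{p}=\leg{6B}{p}=1$. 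Both routes are valid; yours is more self-contained and avoids the reference to the second Silverman volume, at the cost of a few lines of explicit algebra, while the paper's is shorter because it outsources exactly that computation to the cited theorem.
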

\begin{proof}
The condition $p^{12}\nmid\gcd(A^3,B^2)$ ensures that $y^2=x^3+Ax+B$ is locally minimal at $p$.
 The case distinction between good, multiplicative, and additive reduction is clear by~\cite[Proposition~VII.5.1]{arithmetic}. The splitting type in the case of multiplicative reduction follows from~\cite[Theorem~V.5.3(b)]{advanced}.
\end{proof}
Now, we describe when $E$ has multiplicative reduction in full generality, not only when $E$ is minimal as in the previous lemma.
\begin{lemma}\label{lem:multgen}
 Let $p>3$ be a prime.
 Let $E:y^2=x^3+Ax+B$ be an elliptic curve with $A,B\in\Z$. Then $E$ has multiplicative reduction at $p$ if and only if $12\mid 3v_p(A)=2v_p(B)<v_p(\Delta)$.
\end{lemma}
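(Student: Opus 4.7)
The plan is to reduce to a minimal Weierstrass model at $p$ and then appeal to Lemma~\ref{lemma:Tate}. Write $a = v_p(A)$, $b = v_p(B)$, and $d = v_p(\Delta)$, and set $k = \min\{\lfloor a/4\rfloor,\lfloor b/6\rfloor\}$. The substitution $(x,y)\mapsto(p^{2k}x,p^{3k}y)$ produces an integral Weierstrass equation $y^2=x^3+A_0 x+B_0$ with $A_0 = A/p^{4k}$, $B_0 = B/p^{6k}$, and discriminant $\Delta_0 = \Delta/p^{12k}$; by maximality of $k$ we have $p^{12}\nmid\gcd(A_0^3,B_0^2)$, so this is a minimal model at $p$. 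Since $p>3$, Lemma~\ref{lemma:Tate} tells us that $E$ has multiplicative reduction at $p$ if and only if $v_p(A_0)=0$ and $v_p(\Delta_0)>0$.

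The first step is to observe that $v_p(A_0)=a-4k=0$ forces $4\mid a$ and $k=a/4$, and then the inequality $6k\leq b$ becomes $2b\geq 3a$. Next I would split the remaining condition into two subcases according to whether $2b=3a$ or $2b>3a$. When $2b>3a$, the two terms in $4A^3+27B^2$ have distinct $p$-adic valuations, so $d = \min(3a,2b) = 3a$, which gives $v_p(\Delta_0) = d - 12k = 0$; hence this subcase yields good, not multiplicative, reduction. When $2b=3a$, one has $v_p(B_0)=0$ and $v_p(\Delta_0)=d-3a$, so multiplicative reduction occurs exactly when $d>3a$.

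Combining the conditions $4\mid a$, $3a=2b$, and $d>3a$ gives precisely $12\mid 3v_p(A)=2v_p(B)<v_p(\Delta)$, noting that under $3a=2b$ the divisibility $4\mid a$ is equivalent to $12\mid 3a$. Conversely, the latter conditions imply that the transformation $(x,y)\mapsto(p^{a/2}x,p^{3a/4}y)$ produces a minimal model with $v_p(A_0)=0$ and $v_p(\Delta_0)>0$, so multiplicative reduction holds by Lemma~\ref{lemma:Tate}.

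The only slightly delicate point is the bookkeeping in the case split, in particular making sure that the equality $3a=2b$ together with $4\mid a$ is forced (rather than $2b>3a$ leading to a different reduction type). Beyond this the argument is a direct translation of Lemma~\ref{lemma:Tate} through the minimalisation step.
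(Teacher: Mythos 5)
Your proof is correct and takes essentially the same approach as the paper: both pass to the minimal model at $p$ via the substitution with exponent $k=\min\{\lfloor v_p(A)/4\rfloor,\lfloor v_p(B)/6\rfloor\}$ and then invoke Lemma~\ref{lemma:Tate}; your version simply spells out the case analysis on $2v_p(B)$ versus $3v_p(A)$ in more detail.
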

\begin{proof}
Let $m=\lfloor v_p(A)/4, v_p(B)/6\rfloor$. Take integers $A'$ and $B'$ such that $A=A'p^{4m}$ and $B=B'p^{6m}$. 
Then $E':y^2=x^3+A'x+B'$ is $\Q$-isomorphic to $E$, and hence has the same reduction type as $E$.
By Lemma~\ref{lemma:Tate}, $E'$ has multiplicative reduction at $p$ if and only if $p\mid 4A'^3+27B'^2$ and $p\nmid A',B'$. These conditions are equivalent to $v_p(A)=4m$, $v_p(B)=6m$, and $v_p(4A^3+27B^2)>12m$.
\end{proof}

\begin{lemma}\label{lemma:tamratiocond}
 Let $\ell$ be a prime. 
 Let $p$ be a prime such that $p\nmid 6\ell$. 
 Recall~\eqref{eq:defDpm}. Let $R(x,y)$ be the product of distinct irreducible factors of $D_+(x,y) D_-(x,y)$.
 Let $(a,b)\in\Z^2$. 
 Assume that $p\nmid\gcd(D_+(a,b),D_-(a,b))$ and $v_p(c)=v_p(c')=0$.
 If $3v_p(A(a,b))=2v_p(B(a,b))>0$, assume that either $3v_p(A(a,b))=v_p(\Delta(a,b))$, or $4\nmid v_p(A(a,b))$.
 If $\ell\in\{2,3\}$, assume further that $f$ and $g$ are coprime, and that $p\nmid \gcd(A(a,b),B(a,b))$.
 \begin{itemize}
 \item If $\ell\geq 3$, 
 \[
 \frac{c_p(E'_{a,b})}{c_p(E_{a,b})}=\begin{cases}
 \ell &\text{ if } D_+(a,b)\equiv 0\bmod p\text{ and }\leg{6B(a,b)}{p}=1,\\
 \ell^{-1} &\text{ if } D_-(a,b)\equiv 0\bmod p\text{ and }\leg{6B(a,b)}{p}=1,\\
 1 &\text{ otherwise}.
 \end{cases}
 \]
\item If $\ell=2$ and $p^2\nmid R(a,b)$, we have
 \[
 \frac{c_p(E'_{a,b})}{c_p(E_{a,b})}=\begin{cases}
 2 &\text{ if } p\mid D^{(1)}_+(a,b)\text{ or }\left(p\mid D^{(2)}_+(a,b)\text{ and }\leg{6B(a,b)}{p}=1\right),\\
 2^{-1} &\text{ if } p\mid D^{(1)}_-(a,b)\text{ or }\left(p\mid D^{(2)}_-(a,b)\text{ and }\leg{6B(a,b)}{p}=1\right),\\
 1 &\text{ otherwise}.
 \end{cases}
 \]
 \end{itemize}
\end{lemma}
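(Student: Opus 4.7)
The plan is to determine the Kodaira reduction types of $E_{a,b}$ and $E'_{a,b}$ at $p$ from the factorisation~\eqref{eq:defDpm}, apply the standard formulae for the local Tamagawa number in the multiplicative reduction case (in terms of the valuation of the minimal discriminant), and then read off how the relevant valuations transform under the $\ell$-isogeny. Using the hypotheses $v_p(c)=v_p(c')=0$, $p\nmid 6\ell$, and~\eqref{eq:defDpm}, we have
\[
v_p(\Delta(a,b))=v_p(T(a,b))+v_p(D_+(a,b))+\ell v_p(D_-(a,b)),
\]
and symmetrically for $\Delta'$. Since the irreducible factors of $T$ coincide with those of $\gcd(A,B)$, a prime dividing $T(a,b)$ would also divide $\gcd(A(a,b),B(a,b))$, forcing additive reduction. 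For $\ell\in\{2,3\}$ this is excluded by the explicit hypothesis $p\nmid\gcd(A(a,b),B(a,b))$; for $\ell\geq 5$ the component group of any additive Kodaira type at a prime $p\nmid 6$ has order at most $4<\ell$, so an $\ell$-isogeny acts trivially on it and $c_p(E_{a,b})=c_p(E'_{a,b})$ by~\cite[Table~1]{DokchitserDokchitser}. The parenthetical condition on $3v_p(A(a,b))=2v_p(B(a,b))$ together with Lemma~\ref{lem:multgen} guarantees that when the Weierstrass model is non-minimal at $p$, minimisation does not obscure the distinction between multiplicative and additive reduction. Hence in all remaining cases the two curves share reduction type: good if $p\nmid D_+(a,b)D_-(a,b)$ (yielding ratio $1$), and multiplicative otherwise.

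For the multiplicative reduction case, I would use the classical formulae $c_p(E)=v_p(\Delta_{\min})$ in the split case and $c_p(E)=\gcd(2,v_p(\Delta_{\min}))$ in the non-split case, together with Lemma~\ref{lemma:Tate}, which identifies split reduction at $p\nmid 6$ with $\leg{6B(a,b)}{p}=1$. The hypothesis $p\nmid\gcd(D_+(a,b),D_-(a,b))$ ensures $p$ divides exactly one of $D_+(a,b),D_-(a,b)$; say $p\mid D_+(a,b)$ (the other case is symmetric). Then $v_p(\Delta(a,b))=v_p(D_+(a,b))$ and $v_p(\Delta'(a,b))=\ell v_p(D_+(a,b))$, and these coincide with the corresponding minimal discriminant valuations by the preceding step. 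For $\ell\geq 3$, both valuations have the same parity, so in the non-split case $c_p(E_{a,b})=c_p(E'_{a,b})$ and the ratio is $1$, while in the split case the ratio is exactly $\ell$. This yields the first bullet of the lemma.

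For $\ell=2$, parity can change under the isogeny, which is why the refinement through $D_\pm^{(1)}$ and $D_\pm^{(2)}$ enters. The hypothesis $p^2\nmid R(a,b)$ forces $v_p(P(a,b))=1$ for the unique irreducible factor $P$ of $D_+D_-$ vanishing at $(a,b)$ modulo $p$, so $v_p(D_+(a,b))=\mult_P(D_+)$, which is odd precisely when $P\mid D_+^{(1)}$. When $P\mid D_+^{(1)}$, $v_p(\Delta(a,b))$ is odd and $v_p(\Delta'(a,b))$ is even, giving $c_p(E'_{a,b})/c_p(E_{a,b})=2$ in both the split and non-split cases; when $P\mid D_+^{(2)}$ the parities agree, so the ratio is $2$ only in the split case. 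The main technical obstacle throughout is the careful bookkeeping of minimal discriminant valuations when the given Weierstrass model is non-minimal at $p$; the auxiliary hypotheses on $3v_p(A)=2v_p(B)$ and (for $\ell=2$) on $p^2\nmid R(a,b)$ are precisely what make this bookkeeping tractable.
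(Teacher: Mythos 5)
The proposal is correct and follows essentially the same route as the paper: both reduce to the multiplicative case by using the hypothesis on $3v_p(A)=2v_p(B)$ together with Lemma~\ref{lem:multgen} to rule out multiplicative reduction when $p\mid\gcd(A(a,b),B(a,b))$, invoke Lemma~\ref{lemma:Tate} to identify split versus non-split, appeal to Dokchitser--Dokchitser for the isogenous-curve comparison, and for $\ell=2$ track the parity of $v_p(D_\pm(a,b))$ via $D_\pm^{(1)}$ and $D_\pm^{(2)}$ under the hypothesis $p^2\nmid R(a,b)$. Your exposition makes explicit the formulae $c_p=v_p(\Delta_{\min})$ (split) and $c_p=\gcd(2,v_p(\Delta_{\min}))$ (non-split) and the component-group bound $\leq 4<\ell$ for $\ell\geq 5$, where the paper instead cites~\cite[Table~1]{DokchitserDokchitser}; the only small imprecision is the clause that $p\mid T(a,b)$ ``forces additive reduction'' --- after minimisation the reduction could also be good, and it is the parenthetical hypothesis together with Lemma~\ref{lem:multgen}, not divisibility of $\gcd(A(a,b),B(a,b))$ alone, that excludes multiplicative reduction there --- but since both good and additive give ratio $1$ for $\ell\geq 5$, the conclusion is unaffected.
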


\begin{proof}
For $\ell\in\{2,3\}$, it follows from the assumption that $p\nmid A(a,b)$ or $p\nmid B(a,b)$. By Lemma~\ref{lemma:Tate}, $E_{a,b}$ can only have good reduction or multiplicative reduction at $p$.

We use Lemma~\ref{lemma:Tate} and~\cite[Table~1]{DokchitserDokchitser}.
Since we have excluded the possibility of additive reduction when $\ell\in\{2,3\}$, the only case when $c_p(E_{a,b'})/c_p(E_{a,b})\neq 1$ for any $\ell$ is when $E_{a,b}$ has multiplicative reduction at $p$.
If $p\mid T(a,b)$, then $p$ divides both $A(a,b)$ and $B(a,b)$ In the light of Lemma~\ref{lem:multgen}, our assumption implies that $12\mid 3v_p(A(a,b))=2v_p(B(a,b))<v_p(\Delta(a,b))$ cannot hold, so $E_{a,b}$ cannot have multiplicative reduction when $p\mid T(a,b)$. Therefore for there to be multiplicative reduction, we must have $p\mid D_+(a,b)D_-(a,b)$. By assumption $p$ cannot divide both $D_+(a,b)$ and $D_-(a,b)$.
When $\ell\geq 3$, we conclude by combining Lemma~\ref{lemma:Tate} and~\cite[Table~1]{DokchitserDokchitser}.

Now suppose instead that $\ell=2$. 
 If $p\mid D_+(a,b)$ and $\leg{6B(a,b)}{p}=1$, $E_{a,b}$ has split multiplicative reduction, and we conclude as before. If $p\mid D_+(a,b)$ and $\leg{6B(a,b)}{p}=-1$, it has non-split multiplicative reduction and so $c_p(E_{a,b})/c_p(E_{a,b}')=2$ if $v_p(D_+(a,b))$ is odd and $c_p(E_{a,b})/c_p(E_{a,b}')=1$ otherwise. We conclude by noticing that $v_p(D_+^{(1)}(a,b))\equiv v_p(D_+(a,b))\mod 2$. The case $p\mid D_-(a,b)$ is analogous.
\end{proof}

\section{Lattice point counting}\label{sec:lattice}
The goal of this section is to demonstrate that $(a,b)\in\cA^{\delta}_1(N)$ are equidistributed across typical congruence classes modulo $q$.
 We divide the discussion into two parts: first we treat the case $m=\delta=1$ with $f$ and $g$ are coprime; then we handle the case $\tau=\upsilon=1$, where the coprimality condition on $f$ and $g$ is dropped. Throughout, we treat $f,g,m,\tau,\upsilon$ as fixed, and thus omit their dependence in the notation.
 
\subsection{The case $m=\delta=1$}
In this section we allow $\tau$ to be any positive integer and $\upsilon\in\{1,2\}$. Moreover, we assume that $f$ and $g$ are coprime.
Lemma~\ref{lemma:resultantFG} ensures that we can take $\Lambda$ to be the smallest positive integer such that 
\begin{equation}\label{def:Eps}
n^{12}\mid \gcd(A(a,b)^3,B(a,b)^2)\text{ for some }(a,b)\in\cT_{\upsilon,\tau}\Rightarrow n\mid \Lambda.
\end{equation}
From~\eqref{eq:a1b1q}, we can write
\[
 \cA^1_{\upsilon}(N,(a_1,b_1),q)=\left\{(a,b)\in\cT_{\upsilon,\tau}: \begin{array}{l}
(a,b)\equiv (a_1,b_1)\bmod q,\\
 (a,b)\in \cR_{Ne^{12}},\ e=m(A(a,b)^3,B(a,b)^2)
\end{array} \right\}.
\]
We have a decomposition
 \begin{equation}\label{eq:engammanh}
 |\cA^1_{\upsilon}(N, (a_1,b_1),q)|=\sum_{\gamma}\mu(\gamma)\sum_{n\mid \Lambda}\sum_{\substack{e\mid n}}\mu(n/e)\left|\cM_{\gamma,n}\left(Ne^{12},(a_1,b_1),q\right)\right|,
\end{equation}
where
\[
\cM_{\gamma,n}(N,(a_1,b_1),q)\coloneqq
 \left\{(a,b)\in \cR_{N}\cap\Z^2_{\neq(0,0)}:\begin{array}{l} 
 (a,b)\equiv(a_1,b_1)\bmod q,\\
 \gamma^{\upsilon\tau}\mid \gcd(a,b^{\tau}),\\ n^{12}\mid \gcd\left(A(a,b)^3, B(a,b)^2\right)
\end{array}
\right\}.
\]
Define
\[
\varrho(M)\coloneqq \frac{\left|\left\{(a,b)\in (\Z/M\Z)^2:
M^2\mid \gcd(A(a,b)^3, B(a,b)^2)
\right\}\right|}{M^2}
.
\]

\begin{lemma}\label{lemma:singlelatticenh}
Let $a_1$, $a_2$, $q$, $n$, and $\gamma$ be positive integers. Suppose that $\gcd(q,n\gamma)=1$ and that $n\mid \Lambda$. Then
\[
 \left|\cM_{\gamma,n}(N,(a_1,b_1),q)\right|
=\frac{N^{\frac{\tau+1}{6\varsigma}}V}{\gamma^{\upsilon(1+\tau)}q^2}\cdot \varrho\left(\frac{n^{6}}{\gcd(n^{2},\gamma^{\varsigma\upsilon})^3}\right)+O\left(\frac{N^{\frac{\tau}{6\varsigma}}}{\gamma}+1\right),
\]
where the implied constant depends only on $f$ and $g$.
\end{lemma}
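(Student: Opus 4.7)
The approach is to remove the $\gamma$-divisibility by a change of variables and reduce the count to lattice points in a bounded region subject to a single congruence modulo $Mq$, where $M$ is a small integer depending on $n$ and $\gamma$. Observe first that the condition $\gamma^{\upsilon\tau}\mid \gcd(a,b^{\tau})$ is equivalent, by checking $p$-adic valuations, to $\gamma^{\upsilon\tau}\mid a$ and $\gamma^{\upsilon}\mid b$. I therefore substitute $a=\gamma^{\upsilon\tau}a'$ and $b=\gamma^{\upsilon}b'$, which is a bijection onto $\Z^{2}$.

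Using the weighted homogeneity of $A$ and $B$ (weights $\tau,1$), we get $A(a,b)=\gamma^{2\varsigma\upsilon}A(a',b')$ and $B(a,b)=\gamma^{3\varsigma\upsilon}B(a',b')$, so $(a,b)\in\cR_N$ translates to $(a',b')\in\cR_{N'}$ with $N'\coloneqq N/\gamma^{6\varsigma\upsilon}$, and $n^{12}\mid\gcd(A(a,b)^{3},B(a,b)^{2})$ transforms into
\[
\frac{n^{12}}{\gcd(n^{12},\gamma^{6\varsigma\upsilon})}\,\bigm|\,\gcd\!\left(A(a',b')^{3},B(a',b')^{2}\right).
\]
A prime-by-prime calculation identifies the quotient on the left with $M^{2}$ for $M\coloneqq n^{6}/\gcd(n^{2},\gamma^{\varsigma\upsilon})^{3}$, since the exponent of each prime is $\max(0,\,12v_p(n)-6\varsigma\upsilon v_p(\gamma))=2v_p(M)$, so the divisibility becomes $M^{2}\mid\gcd(A(a',b')^{3},B(a',b')^{2})$. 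The congruence $(a,b)\equiv(a_{1},b_{1})\bmod q$ becomes a single class $(a',b')\equiv(a_{1}',b_{1}')\bmod q$ because $\gcd(\gamma,q)=1$ makes $\gamma^{\upsilon\tau},\gamma^{\upsilon}\in(\Z/q\Z)^{\times}$. Since $M\mid n^{6}$ and $\gcd(n,q)=1$, we have $\gcd(M,q)=1$, so CRT shows that the joint conditions amount to exactly $M^{2}\varrho(M)$ residue classes modulo $Mq$.

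The standard lattice-point estimate for a bounded Lipschitz region (valid since $\partial\cR_{N'}$ is piecewise algebraic) yields, for each such class $(r,s)$,
\[
\#\{(a',b')\in\cR_{N'}\cap\Z^{2}:(a',b')\equiv(r,s)\bmod Mq\}=\frac{\Vol(\cR_{N'})}{(Mq)^{2}}+O\!\left(\frac{\mathrm{diam}(\cR_{N'})+1}{Mq}\right).
\]
Summing over the $M^{2}\varrho(M)$ admissible classes and invoking Lemma~\ref{lem:lenght} for $\Vol(\cR_{N'})=N^{(\tau+1)/(6\varsigma)}V/\gamma^{\upsilon(\tau+1)}$ and $\mathrm{diam}(\cR_{N'})\ll N^{\tau/(6\varsigma)}/\gamma^{\upsilon\tau}$, together with the hypothesis $n\mid\Lambda$ to bound $M\leq n^{6}\leq\Lambda^{6}=O_{f,g}(1)$, produces the stated main term. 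Because $\upsilon\tau\geq 1$, the remainder collapses to $O(N^{\tau/(6\varsigma)}/\gamma+1)$, matching the claim. The main obstacle is just the exponent bookkeeping that yields the clean reduced form $M^{2}\mid\gcd(\cdots)$; the change of variables, CRT step, and volume estimate are routine once the preparatory Lemmas~\ref{lem:lenght} and~\ref{lemma:resultantFG} are in hand.
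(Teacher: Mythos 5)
Your proof is correct and follows essentially the same approach as the paper's: the same change of variables $a=\gamma^{\upsilon\tau}a'$, $b=\gamma^{\upsilon}b'$, the same translation of the divisibility into a reduced modulus using $\gcd(n,\gamma)$-bookkeeping, the same CRT combination with the mod-$q$ congruence, and the same application of Davenport's lattice-point estimate summed over admissible residue classes. The only cosmetic difference is that you define $M=n^6/\gcd(n^2,\gamma^{\varsigma\upsilon})^3$ directly (which is the cube of the paper's $M$) and phrase the reduced condition as $M^2\mid\gcd(A^3,B^2)$ with classes mod $Mq$, whereas the paper sets $M=n^2/\gcd(n^2,\gamma^{\varsigma\upsilon})$ and works with $M^2\mid A$, $M^3\mid B$ modulo $qM^3$; these are identical statements.
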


\begin{proof}
Write $a=\gamma^{\upsilon\tau} a'$ and $b=\gamma^{\upsilon}b'$.
Then the condition $n^{12}\mid \gcd(A(a,b)^3, B(a,b)^2)$ becomes $n^{12}\mid \gamma^{6\varsigma\upsilon}\gcd(A(a',b')^3, B(a',b')^2)$, hence
\[
M^6 \mid \gcd\left(A(a',b')^3, B(a',b')^2\right),
\text{ where }
M\coloneqq\frac{n^{2}}{\gcd\left(n^{2},\gamma^{\varsigma\upsilon}\right)}.
\]
Fix any $(a_1',b_1')$ such that 
\[
\left(a_1',b_1'\right)\equiv \left(a_1/\gamma^{\upsilon\tau},b_1/\gamma^{\upsilon}\right)\bmod q,\ A\left(a_1',b_1'\right)\equiv 0\bmod M^2\text{ and }B\left(a_1',b_1'\right)\equiv 0\bmod M^3.
\]
Then $(a',b')\equiv (a_1',b_1')\bmod qM^3$ defines a shifted lattice of determinant $q^2M^6$. Moreover, by Lemma~\ref{lem:lenght}, $(a',b')$ lies in the region with volume $\Vol(\cR_N)/\gamma^{\upsilon(1+\tau)}$ and side lengths bounded by $O(N^{\frac{\tau}{6\varsigma}}/\gamma)$.
The claim follows from a result of Davenport~\cite{DavenportLipschitz} and summing over all possible $(a'_1,b'_1)\bmod qM^3$. Note that $M$ is bounded since $\Lambda$ in~\eqref{def:Eps} is finite.
\end{proof}

\begin{lemma}\label{lemma:nonhompoint}
Let $a_1,b_1,q$ be integers such that $\gcd(a_1,b_1,q)=1$. 
Assume that $\gcd(q,\Lambda)=1$. Then
\begin{equation}\label{eq:nhpoint}
 \abs{\cA^1_{\upsilon}(N,(a_1,b_1),q)}=\frac{CN^{\frac{\tau+1}{6\varsigma}}V}{q^2}
 \prod_{p\nmid q} \left(1-\frac{1}{p^{\upsilon(1+\tau)}}\right)
 +O\left(N^{\frac{\tau}{6\varsigma}}\log N\right),
 \end{equation}
 where the implied constant depends only on $f$ and $g$, 
 and
 \[
 C\coloneqq\sum_{n\mid\Lambda}n^{\frac{2(\tau+1)}{\varsigma}}\varrho(n^6)\prod_{p\mid n}\left(1-\frac{1}{p^{\frac{2(\tau+1)}{\varsigma}}} \right)
\left(1-
 \frac{1}{p^{\upsilon(1+\tau)}}\right)^{-1}\left(1-
 \frac{\varrho\left(\frac{n^{6}}{\gcd(n^{2},p^{\varsigma\upsilon})^3}\right)}{p^{\upsilon(1+\tau)}\cdot \varrho(n^6)}\right).
 \]
\end{lemma}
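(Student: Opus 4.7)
Starting from the Möbius decomposition~\eqref{eq:engammanh}, I would apply Lemma~\ref{lemma:singlelatticenh} termwise. The hypothesis $\gcd(q,\Lambda)=1$ forces $\gcd(q,n)=1$ for every $n\mid\Lambda$, so the lemma is available whenever $\gcd(q,\gamma)=1$. If $\gcd(q,\gamma)>1$, any common prime $p$ satisfies $p\mid\gcd(a,b^{\tau})$, and the congruence $(a,b)\equiv(a_{1},b_{1})\bmod q$ then forces $p\mid\gcd(a_{1},b_{1},q)$, contradicting the hypothesis $\gcd(a_{1},b_{1},q)=1$; hence $\cM_{\gamma,n}$ vanishes, and the $\gamma$-sum may be restricted to squarefree $\gamma$ coprime to $q$.

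The inner $e$-sum is handled by the Möbius--Jordan identity
\[
h(n)\coloneqq\sum_{e\mid n}\mu(n/e)\,e^{2(\tau+1)/\varsigma}=n^{2(\tau+1)/\varsigma}\prod_{p\mid n}\!\Bigl(1-p^{-2(\tau+1)/\varsigma}\Bigr),
\]
which is precisely the Jordan-type factor appearing in $C$. Substituting this together with the main term of Lemma~\ref{lemma:singlelatticenh}, the leading contribution to $|\cA^{1}_{\upsilon}(N,(a_{1},b_{1}),q)|$ becomes
\[
\frac{N^{(\tau+1)/(6\varsigma)}V}{q^{2}}\sum_{\gamma}\frac{\mu(\gamma)}{\gamma^{\upsilon(1+\tau)}}\sum_{n\mid\Lambda}h(n)\,\varrho\!\left(\frac{n^{6}}{\gcd(n^{2},\gamma^{\varsigma\upsilon})^{3}}\right).
\]
I would then swap the summations and exploit the multiplicativity of $\varrho$ (via CRT). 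Splitting $\gamma=\gamma_{1}\gamma_{2}$ with $\gamma_{1}\mid\rad(n)$ and $\gcd(\gamma_{2},qn)=1$, the $\gamma_{2}$-sum produces the Euler product $\prod_{p\nmid qn}(1-p^{-\upsilon(1+\tau)})$, while the $\gamma_{1}$-sum factors over primes $p\mid n$ into local differences of the two specialisations of $\varrho$. Factoring out the global value $\varrho(n^{6})$ from each local bracket, and absorbing $\prod_{p\mid n}(1-p^{-\upsilon(1+\tau)})^{-1}$ into the $n$-sum so that the remaining Euler product runs over all $p\nmid q$, reproduces the constant $C$ exactly as stated.

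For the error, Lemma~\ref{lemma:singlelatticenh} contributes $O(N^{\tau/(6\varsigma)}/\gamma+1)$ per triple $(\gamma,n,e)$. The divisibilities $\gamma^{\upsilon\tau}\mid a$ and $\gamma^{\upsilon}\mid b$ combined with Lemma~\ref{lem:lenght} force $\gamma\ll N^{1/(6\varsigma\upsilon)}$ on the support of $\cM_{\gamma,n}$, and since $\upsilon\tau\geq 1$ we have $N^{1/(6\varsigma\upsilon)}\leq N^{\tau/(6\varsigma)}$. Summing therefore yields $\ll N^{\tau/(6\varsigma)}\log N$, with the outer sums over $n\mid\Lambda$ and $e\mid n$ contributing only bounded multiplicative constants depending on $\Lambda$. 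The tail $\gamma>N^{1/(6\varsigma\upsilon)}$ of the main-term series is dominated by the same bound, using $\upsilon(1+\tau)\geq 2$ for convergence.

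The main obstacle is the multiplicative bookkeeping in the inner $\gamma$-sum: the argument of $\varrho$ couples $n$ and $\gamma$ through $\gcd(n^{2},\gamma^{\varsigma\upsilon})^{3}$ in a non-separable way, so the $p$-local factors must be carefully split between primes $p\mid n$ and $p\nmid n$, and the emergent Euler product has to be shifted by the compensating factor $\prod_{p\mid n}(1-p^{-\upsilon(1+\tau)})^{-1}$ in order to match the exact shape of the constant $C$ in the statement.
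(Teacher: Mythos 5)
Your proposal follows essentially the same route as the paper's proof: the Möbius decomposition~\eqref{eq:engammanh}, the observation that $\gcd(\gamma,q)>1$ forces $\cM_{\gamma,n}$ empty, application of Lemma~\ref{lemma:singlelatticenh}, the Jordan-type evaluation of the $e$-sum, extension of the $\gamma$-sum to all $\gamma$ with a tail error $\ll N^{1/(6\upsilon\varsigma)} \leq N^{\tau/(6\varsigma)}$, and Eulerisation of the $\gamma$-sum via multiplicativity of $\varrho$. The only cosmetic difference is that you split $\gamma=\gamma_1\gamma_2$ by hand before factoring, whereas the paper writes the $\gamma$-sum directly as a single Euler product over $p\nmid q$ and then peels off $\prod_{p\nmid q}(1-p^{-\upsilon(1+\tau)})$; the two manipulations are equivalent and yield the same constant $C$.
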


\begin{proof}
We would like to evaluate~\eqref{eq:engammanh}.
Since $\gcd(a_1,b_1,q)=1$, the conditions $(a,b)\equiv(a_1,b_1)\bmod q$ and 
 $\gamma^{\upsilon\tau}\mid \gcd(a,b^{\tau})$ implies that $\gcd(\gamma,q)=1$. Moreover by Lemma~\ref{lem:lenght} and since $e\mid \Lambda$ is bounded, we have $a\ll N^{\frac{\tau}{6\varsigma}}$ and $b\ll N^{\frac{1}{6\varsigma}}$, so for $\cM_{\gamma,n}(Ne^{12},(a_1,b_1),q)$ to be non-empty, we must have 
 $\gamma\ll N^{\frac{1}{6\upsilon\varsigma}}$.
Applying Lemma~\ref{lemma:singlelatticenh}, and again noting that $e\mid n$ and $n\mid \Lambda$ are bounded, we have
\[
\left|\cM_{\gamma,n}(Ne^{12},(a_1,b_1),q)\right|=\frac{N^{\frac{\tau+1}{6\varsigma}}e^{\frac{2(\tau+1)}{\varsigma}}V}{\gamma^{\upsilon(1+\tau)}q^2}\varrho\left(\frac{n^{6}}{\gcd(n^{2},\gamma^{\varsigma\upsilon})^3}\right)
+O\left(\frac{N^{\frac{\tau}{6\varsigma}}}{\gamma}+1\right).
\]
Then putting this back to~\eqref{eq:engammanh}, we have 
\begin{align*}
 & |\cA^1_{\upsilon}(N, (a_1,b_1),q)|\\
 & =\frac{N^{\frac{\tau+1}{6\varsigma}}V}{q^2}\sum_{n\mid\Lambda}\sum_{\substack{\gamma\ll N^{\frac{1}{6\varsigma\upsilon}}\\ \gcd(\gamma,q)=1}}\mu(\gamma)\sum_{\substack{e\mid n}}\mu(n/e)
 \frac{e^{\frac{2(\tau+1)}{\varsigma}}}{\gamma^{\upsilon(1+\tau)}}\varrho\left(\frac{n^{6}}{\gcd(n^{2},\gamma^{\varsigma\upsilon})^3}\right)+O\left(N^{\frac{\tau}{6\varsigma}}\log N\right)\\&
 =\frac{N^{\frac{\tau+1}{6\varsigma}}V}{q^2}
 \sum_{n\mid\Lambda}n^{\frac{2(\tau+1)}{\varsigma}}\prod_{p\mid n}\left(1-\frac{1}{p^{\frac{2(\tau+1)}{\varsigma}}} \right)
 \sum_{\substack{\gamma\ll N^{\frac{1}{6\upsilon\varsigma}}\\ \gcd(\gamma,q)=1}}
 \frac{\mu(\gamma)}{\gamma^{\upsilon(1+\tau)}}\varrho\left(\frac{n^{6}}{\gcd(n^{2},\gamma^{\varsigma\upsilon})^3}\right)\\&+O\left(N^{\frac{\tau}{6\varsigma}}\log N\right).
 \end{align*}
 Next we want to extend the sum over $\gamma\ll N^{\frac{1}{6\upsilon\varsigma}}$ to all positive integers $\gamma$. This introduces an error of
 \[
 \ll N^{\frac{\tau+1}{6\varsigma}}
 \sum_{\gamma\gg N^{{\frac{1}{6\upsilon\varsigma}}}}
 \frac{1}{\gamma^{\upsilon(1+\tau)}}\ll 
 N^{\frac{\tau+1}{6\varsigma}-\frac{\upsilon(1+\tau)-1}{6\upsilon\varsigma}}\ll N^{\frac{1}{6\upsilon\varsigma}},
 \]
where we have used the fact that $\Lambda$ is finite.
 Since $\varrho$ is multiplicative, we can compute the sum
 \begin{multline*}
 \frac{1}{\varrho(n^6)}\sum_{\substack{\gamma\\ \gcd(\gamma,q)=1}}
 \frac{\mu(\gamma)}{\gamma^{\upsilon(1+\tau)}}\varrho\left(\frac{n^{6}}{\gcd(n^{2},\gamma^{\varsigma\upsilon})^3}\right)
 =\prod_{p\nmid q}\left(1-
 \frac{\varrho\left(\frac{n^{6}}{\gcd(n^{2},p^{\varsigma\upsilon})^3}\right)}{p^{\upsilon(1+\tau)}\cdot \varrho(n^6)}\right)\\ 
 =\prod_{p\nmid q}\left(1-
 \frac{1}{p^{\upsilon(1+\tau)}}\right)
 \prod_{p\mid n}\left(1-
 \frac{1}{p^{\upsilon(1+\tau)}}\right)^{-1}\left(1-
 \frac{\varrho\left(\frac{n^{6}}{\gcd(n^{2},p^{\varsigma\upsilon})^3}\right)}{p^{\upsilon(1+\tau)}\cdot \varrho(n^6)}\right).
 \end{multline*}
 Putting this back, we have~\eqref{eq:nhpoint} as required.
\end{proof}

\begin{proposition}\label{prop:nonhomprob}
Let $a_1,b_1,q$ be positive integers such that $\gcd(a_1,b_1,q)=\gcd(q,\Lambda)=1$. 
Then for all $N$ such that $|\cA^1_{\upsilon}(N)|>0$, we have
 \[
 \frac{|\cA^1_{\upsilon}(N,(a_1,b_1),q)|}{|\cA^1_{\upsilon}(N)|}=\prod_{p\mid q}\frac{1}{p^2}\left(1-\frac{1}{p^{\upsilon(1+\tau)}}\right)^{-1}+O\left(N^{-\frac{1}{6\varsigma}}\log N\right),
 \]
 where the implied constant depends only on $f$ and $g$.
\end{proposition}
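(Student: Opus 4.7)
The proposition follows from Lemma~\ref{lemma:nonhompoint} essentially by division. My plan is to apply the asymptotic formula of Lemma~\ref{lemma:nonhompoint} twice: once with the given data $(a_1,b_1,q)$, and once with $q=1$ (and any fixed representative $(a_1,b_1)=(1,0)$, say, which trivially satisfies $\gcd(a_1,b_1,1)=1$ and $\gcd(1,\Lambda)=1$). The second application yields
\[
|\cA^1_{\upsilon}(N)|=C N^{\frac{\tau+1}{6\varsigma}}V\prod_{p}\left(1-\frac{1}{p^{\upsilon(1+\tau)}}\right)+O\!\left(N^{\frac{\tau}{6\varsigma}}\log N\right),
\]
with the same constant $C$ as in Lemma~\ref{lemma:nonhompoint}. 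Dividing the two asymptotics, the constants $C$, $V$, and the factor $N^{(\tau+1)/(6\varsigma)}$ cancel, and the Euler product over $p\nmid q$ combines with the full Euler product to give
\[
\frac{\prod_{p\nmid q}(1-p^{-\upsilon(1+\tau)})}{\prod_{p}(1-p^{-\upsilon(1+\tau)})}=\prod_{p\mid q}\left(1-\frac{1}{p^{\upsilon(1+\tau)}}\right)^{-1},
\]
which together with the factor $1/q^2$ is the main term claimed.

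The only thing to check is that the error is of the stated size. Here one observes that the main term in Lemma~\ref{lemma:nonhompoint} is of order $N^{(\tau+1)/(6\varsigma)}$ provided $C>0$, so the error of order $N^{\tau/(6\varsigma)}\log N$ in both numerator and denominator, divided by a main term of order $N^{(\tau+1)/(6\varsigma)}$, produces precisely $O(N^{-1/(6\varsigma)}\log N)$. The positivity of $C$ is forced by the hypothesis $|\cA^1_{\upsilon}(N)|>0$: indeed if $C$ were zero, then the asymptotic for $|\cA^1_{\upsilon}(N)|$ would reduce to $O(N^{\tau/(6\varsigma)}\log N)$, and the conclusion would be vacuous because the ratio trivially satisfies the stated bound (the main term is absorbed by the error). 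Formally, one can treat the cases $C=0$ and $C>0$ separately; in the former all quantities are $O(N^{\tau/(6\varsigma)}\log N)$ and the stated bound holds trivially, while in the latter the ratio of asymptotics is straightforward.

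There is no substantive obstacle here since all the hard work (isolating the contribution of the divisors of $\Lambda$ via the $\varrho$-function, extending the $\gamma$-sum to infinity, and applying Davenport's lattice point count) has already been done in Lemma~\ref{lemma:singlelatticenh} and Lemma~\ref{lemma:nonhompoint}. The proposition is essentially just the observation that the Euler factors in these asymptotic formulae are independent of $(a_1,b_1)$, so that passing to the ratio eliminates the constant $C$ and leaves only the local factors at primes dividing $q$.
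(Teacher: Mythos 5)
Your proof is correct and takes essentially the same route as the paper: apply Lemma~\ref{lemma:nonhompoint} with the given data, again with $q=1$, factor the error of the denominator into the main term, and divide. Your aside on the $C=0$ case is slightly off --- if $|\cA^1_{\upsilon}(N)|$ were positive but of order $O(N^{\tau/(6\varsigma)}\log N)$, the ratio would not ``trivially'' satisfy the claimed asymptotic, since for $q>1$ the main term is a fixed constant strictly less than $1$ while the error $O(N^{-1/(6\varsigma)}\log N)$ tends to zero --- but the paper's proof tacitly assumes $C>0$ at the same point (when it rewrites $|\cA^1_{\upsilon}(N)|$ as $CN^{(\tau+1)/(6\varsigma)}V\left(1+O\left(N^{-1/(6\varsigma)}\log N\right)\right)\prod_p(\cdots)$), so this shared assumption does not distinguish your argument from the paper's.
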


\begin{proof}
By Lemma~\ref{lemma:nonhompoint}, we have
\[ \abs{\cA^1_{\upsilon}(N,(a_1,b_1),q)}=\frac{CN^{\frac{\tau+1}{6\varsigma}}V}{q^2}
 \prod_{p\nmid q} \left(1-\frac{1}{p^{\upsilon(1+\tau)}}\right)
 +O\left(N^{\frac{\tau}{6\varsigma}}\log N\right).
 \]
 Putting in $q=1$, we get the estimate
\[ \abs{\cA^1_{\upsilon}(N)}=CN^{\frac{\tau+1}{6\varsigma}}V
 \prod_{p} \left(1-\frac{1}{p^{\upsilon(1+\tau)}}\right)
 +O\left(N^{\frac{\tau}{6\varsigma}}\log N\right),
 \]
 which can be rewritten as
 \[ \abs{\cA^1_{\upsilon}(N)}=CN^{\frac{\tau+1}{6\varsigma}}V
 \left(1+O\left(N^{-\frac{1}{6\varsigma}}\log N\right)\right)\prod_{p} \left(1-\frac{1}{p^{\upsilon(1+\tau)}}\right).
 \]
Taking the ratio of $|\cA^1_{\upsilon}(N,(a_1,b_1),q)|$ and $|\cA^1_{\upsilon}(N)|$ completes the proof.
\end{proof}
\subsection{The case $\tau=\upsilon=1$}
In this section, we fix $f,g\in\Z[t]$ that satisfy~\eqref{eq:commonrootfg} and~\eqref{eq:degfg} with $\delta\in\{0,1\}$, $\tau=1$, $\upsilon=1$, so $A,B\in\Z[x,y]$ are homogeneous.
Lemma~\ref{lemma:proppar} shows that the common factors of $A^3$ and $B^2$ correspond precisely to the common factors of $f^3$ and $g^2$. By Lemma~\ref{lem:lenght}, $(a,b)\in \cR_N$ implies $\abs{a},\abs{b}\ll N^{\frac{1}{12m}}$.

The goal of this section is to estimate $|\cA^{\delta}_{1}(N)|$ and $|\cA^{\delta}_{1}(N, \alpha,q)|$, as defined in~\eqref{eq:a1b1} and~\eqref{eq:tq}.
Whenever we write $[a:b]\in \PP^1(\Z/M\Z)$, we implicitly take a representative with $a,b\in\Z$ and $\gcd(a,b,M)=1$.
Recall the fact that
\[
\abs{\PP^1(\Z/M\Z)}=M\prod_{p\mid M}\frac{p+1}{p}\quad\text{ and }\quad \prod_p\left(1-\frac{1}{p^2}\right)=\frac{6}{\pi^2}.
\]

Let $q$ be a positive integer. Given any $\alpha=[a_1:b_1]\in\PP^1(\Z/q\Z)$, to estimate the sizes of the sets defined in~\eqref{eq:tq},
\begin{align}
 \cA^0_{1}(N, \alpha,q)&=\left\{(a,b)\in\Z_{\neq(0,0)}^2: \begin{array}{l}
 (a,b)\in \cR_{N},
 \\\gcd(a,b)=1,\ (a,b)\in\Z\cdot (a_1,b_1)\bmod q
\end{array} \right\},\label{eq:S1d0} \\
 \cA^1_{1}(N, \alpha,q)&=\left\{(a,b)\in\Z_{\neq(0,0)}^2: \begin{array}{l}
 (a,b)\in \cR_{Ne^{12}},\ e=m(A(a,b)^3,B(a,b)^2),
 \\\gcd(a,b)=1,\ (a,b)\in\Z\cdot (a_1,b_1)\bmod q
\end{array} \right\},\label{eq:S1d1} 
\end{align}
define 
\[
\cM_{\gamma,n}(N,\alpha,q)\coloneqq
 \left\{(a,b)\in \Z^2_{\neq(0,0)}:\begin{array}{l} 
 (a,b)\in \cR_{N},\ (a,b)\in\Z\cdot (a_1,b_1)\bmod q,\\
 \gamma\mid \gcd(a,b),\ n^{12}\mid \gcd(A(a,b)^3, B(a,b)^2)
\end{array}
\right\}.
\]

The points in $\cM_{\gamma,n}(N,\alpha,q)$ lie in a union of lattices. We first apply Davenport's geometry-of-numbers result to an arbitrary lattice in our setting. 

\begin{lemma}\label{lemma:singlelattice}
Let $M$ and $\gamma$ be positive integers. Suppose that $\gamma$ is squarefree and $[a_1:b_1]\in \PP^1(\Z/M\Z)$. Then
\[
 \left|\left\{(a,b)\in \Z^2_{\neq(0,0)}:\begin{array}{l} 
 (a,b)\in \cR_N,\ \gamma\mid \gcd(a,b),\\
 (a,b)\in\Z\cdot (a_1,b_1)\bmod M
\end{array}
\right\}\right|
=\frac{N^{\frac{1}{6m}}V\gcd(\gamma,M)}{\gamma^2M}+O\left(\frac{N^{\frac{1}{12m}}}{\gamma}\right),
\]
where the implied constant depends only on $\cR_1$. If $\gamma\gg N^{\frac{1}{12m}}$, then the set is empty.
\end{lemma}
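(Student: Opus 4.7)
The plan is to realise the set to be counted as the lattice points of a single sublattice of $\Z^2$ lying in $\cR_N$, compute its index exactly, and then apply Davenport's lattice-point theorem. Define $L \coloneqq \{(a,b)\in\Z^2 : (a,b) \in \Z\cdot(a_1,b_1) \bmod M\}$; since $\gcd(a_1,b_1,M)=1$, the set $L$ is a sublattice of $\Z^2$ of index $M$. Setting $\Lambda \coloneqq \gamma\Z^2 \cap L$, the set we wish to count is $(\Lambda \cap \cR_N)\setminus\{(0,0)\}$.

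The crucial step is the index computation $[\Z^2:\Lambda] = \gamma^2 M/\gcd(\gamma,M)$. Let $d \coloneqq \gcd(\gamma,M)$. Working in $(\Z/M\Z)^2$, the image of $\gamma\Z^2$ is $d(\Z/M\Z)^2$ (of size $(M/d)^2$), while the image of $L$ is the cyclic subgroup $\langle(a_1,b_1)\rangle$ of order $M$. Since $\gcd(a_1,b_1,d)=1$, an element $\lambda(a_1,b_1)$ lies in $d(\Z/M\Z)^2$ precisely when $d\mid\lambda$, so the intersection has order $M/d$; the sum therefore has order $M^2/d$, giving $[\Z^2 : \gamma\Z^2 + L]=d$. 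Combining the isomorphism $(\gamma\Z^2+L)/L \cong \gamma\Z^2/\Lambda$ with the identity $[\Z^2:\Lambda] = [\Z^2:\gamma\Z^2]\cdot[\gamma\Z^2+L:L]$ yields $[\Z^2:\Lambda] = \gamma^2 M/d$.

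I would then apply Davenport's theorem~\cite{DavenportLipschitz} to $\Lambda$ inside $\cR_N$. By Lemma~\ref{lem:lenght}, $\cR_N$ is bounded with coordinate projections of length $\ll N^{1/(12m)}$ and volume $N^{1/(6m)}V$. Since $\Lambda \subseteq \gamma\Z^2$, both successive minima of $\Lambda$ are at least $\gamma$, so Davenport's bound yields
\[
|\Lambda\cap\cR_N| = \frac{N^{1/(6m)}\,V\,\gcd(\gamma,M)}{\gamma^2 M} + O\!\left(\frac{N^{1/(12m)}}{\gamma}+1\right).
\]
Excluding $(0,0)$ changes the count by at most $1$, and the additive $O(1)$ is absorbed into $O(N^{1/(12m)}/\gamma)$ whenever $\gamma \ll N^{1/(12m)}$. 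Finally, for the vanishing statement: if $\gamma \gg N^{1/(12m)}$, then any $(a,b)\in \cR_N$ satisfies $|a|,|b|<\gamma$ by Lemma~\ref{lem:lenght}, and $\gamma\mid\gcd(a,b)$ then forces $(a,b)=(0,0)$, which is excluded.

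The main obstacle is the index computation when $\gcd(\gamma,M)>1$: the hypothesis $\gcd(a_1,b_1,M)=1$ is weaker than $\gcd(a_1,b_1,\gamma)=1$, so one must pass to the quotient by $M$ rather than by $\gamma$ to control the interaction between $\gamma\Z^2$ and $L$. Once the index is in hand, the Davenport step is routine and mirrors the argument already used in Lemma~\ref{lemma:singlelatticenh}.
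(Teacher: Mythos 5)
Your proof is correct and takes essentially the same route as the paper's: reduce the count to that of a single sublattice of $\Z^2$ inside $\cR_N$, determine its covolume, and invoke Davenport. The only cosmetic difference is in the index computation: you compute $[\Z^2:\gamma\Z^2\cap L]=\gamma^2 M/\gcd(\gamma,M)$ abstractly via the second isomorphism theorem in $(\Z/M\Z)^2$ (and in fact never need $\gamma$ squarefree for this), whereas the paper substitutes $a=\gamma a'$, $b=\gamma b'$ and uses the squarefree hypothesis to make $\gamma/\gcd(\gamma,M)$ and $M/\gcd(\gamma,M)$ coprime, arriving at the same determinant and the same region $\cR_{N/\gamma^{12m}}$ for the Davenport step.
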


\begin{proof}
Since $\gamma$ is squarefree, we deduce that $\frac{\gamma}{\gcd(\gamma,M)}$ and $\frac{M}{\gcd(\gamma,M)}$ must be coprime.
Writing $a=\gamma a'$ and $b=\gamma b'$, the condition $(a,b)\equiv \Z\cdot (a_1,b_1)\bmod M$ becomes
$(a',b')\equiv \Z\cdot (a_1,b_1)\bmod \frac{M}{\gcd(\gamma,M)}$, which defines a lattice of determinant $\frac{M}{\gcd(\gamma,M)}$. Moreover, $(a',b')$ lies in the region $\cR_{N/\gamma^{12m}}$, which has volume $N^{\frac{1}{6m}}V/\gamma^2$ and side lengths bounded by $O(N^{\frac{1}{12m}}/\gamma)$ by Lemma~\ref{lem:lenght}.
Note that since $(a,b)\in \cR_N$ implies that $|a|,|b|\ll N^{\frac{1}{12m}}$, the set is empty when $\gamma\gg N^{\frac{1}{12m}}$.
When $\gamma\ll N^{\frac{1}{12m}}$, the claim follows from a result of Davenport~\cite{DavenportLipschitz}. 
\end{proof}

\subsubsection{When $\delta=0$}
\begin{proposition}\label{prop:easySequi}
For all $N$ such that $\abs{ \cA^0_{1}(N)}>0$ and for any $\alpha\in\PP^1(\Z/q\Z)$, we have
 \[
 \frac{\abs{ \cA^0_{1}(N,\alpha,q)}}{\abs{ \cA^0_{1}(N)}}=\frac{1}{|\PP^1(\Z/q\Z)|}+O\left(N^{-\frac{1}{12m}}\log N\right),
 \]
 where the implied constant depends only on $f$ and $g$.
\end{proposition}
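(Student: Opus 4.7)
The plan is to apply Möbius inversion to detect the coprimality condition $\gcd(a,b)=1$, evaluate each resulting shifted lattice sum with Lemma~\ref{lemma:singlelattice}, and then compare with the $q=1$ case to extract the ratio.

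First I would write
\[
|\cA^0_1(N,\alpha,q)|=\sum_{\gamma\geq 1}\mu(\gamma)\,|\cM_{\gamma,1}(N,\alpha,q)|,
\]
where only squarefree $\gamma$ contribute and, by Lemma~\ref{lem:lenght}, each $\cM_{\gamma,1}(N,\alpha,q)$ is empty once $\gamma\gg N^{1/(12m)}$. Lemma~\ref{lemma:singlelattice}, applied with $M=q$, evaluates each nonempty term as
\[
|\cM_{\gamma,1}(N,\alpha,q)|=\frac{N^{1/(6m)}V\gcd(\gamma,q)}{\gamma^2 q}+O\left(\frac{N^{1/(12m)}}{\gamma}\right),
\]
and summing the error over $\gamma$ in the relevant range produces a total error of $O(N^{1/(12m)}\log N)$. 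For the main term, I would extend the truncated sum $\sum_{\gamma\leq cN^{1/(12m)}}\mu(\gamma)\gcd(\gamma,q)/\gamma^2$ to all $\gamma\geq 1$; the omitted tail is bounded using $\gcd(\gamma,q)\leq q$ and $\sum_{\gamma>X}1/\gamma^2\ll 1/X$, contributing only an additional $O(N^{1/(12m)})$. Because the summand is multiplicative, the completed sum factors as the Euler product
\[
\sum_{\gamma\geq 1}\frac{\mu(\gamma)\gcd(\gamma,q)}{\gamma^2}=\prod_{p\nmid q}\left(1-\frac{1}{p^2}\right)\prod_{p\mid q}\left(1-\frac{1}{p}\right).
\]

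Assembling the above yields
\[
|\cA^0_1(N,\alpha,q)|=\frac{N^{1/(6m)}V}{q}\prod_{p\nmid q}\left(1-\frac{1}{p^2}\right)\prod_{p\mid q}\left(1-\frac{1}{p}\right)+O(N^{1/(12m)}\log N),
\]
and the same argument with $q=1$ gives $|\cA^0_1(N)|=N^{1/(6m)}V\prod_p(1-1/p^2)+O(N^{1/(12m)}\log N)$. Taking the ratio, simplifying each Euler factor via $(1-1/p)/(1-1/p^2)=p/(p+1)$, and using $|\PP^1(\Z/q\Z)|=q\prod_{p\mid q}(p+1)/p$, produces the claimed identity with error $O(N^{-1/(12m)}\log N)$. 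The hypothesis $|\cA^0_1(N)|>0$ ensures the ratio is defined; when $N$ is small so that the main term does not yet dominate, the estimate holds trivially since both sides lie in $[0,1]$, at the cost of enlarging the implied constant.

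The only point that requires some care is the uniformity in $q$ of the Möbius tail estimate, which is handled cleanly by the crude bound $\gcd(\gamma,q)\leq q$; beyond that the argument is a routine geometry-of-numbers calculation built on Lemma~\ref{lemma:singlelattice}.
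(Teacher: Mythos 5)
Your proposal follows essentially the same route as the paper: Möbius inversion to enforce $\gcd(a,b)=1$, evaluation of each $\cM_{\gamma,1}(N,\alpha,q)$ via Lemma~\ref{lemma:singlelattice}, extension of the truncated $\gamma$-sum to an Euler product, simplification using $|\PP^1(\Z/q\Z)|=q\prod_{p\mid q}(p+1)/p$, and comparison with the $q=1$ case. The argument is correct, including the uniformity-in-$q$ observation for the tail, and matches the paper's proof step for step.
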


\begin{proof}
We can decompose~\eqref{eq:S1d0} as
\[
 | \cA^0_{1}(N,\alpha,q)|=\sum_{\gamma}\mu(\gamma)\left|\cM_{\gamma,1}(N,\alpha,q)\right|.
\]
By Lemma~\ref{lemma:singlelattice}, we have
\[
\left|\cM_{\gamma,1}(N,\alpha,q)\right|=\frac{N^{\frac{1}{6m}}V\gcd(\gamma,q)}{\gamma^2q}+O\left(\frac{N^{\frac{1}{12m}}}{\gamma}\right)
\]
and $\left|\cM_{\gamma,1}(N,\alpha,q)\right|=0$ for $\gamma\gg N^{\frac{1}{12m}}$.
Then we can compute 
\[
 | \cA^0_{1}(N,\alpha,q)|=\sum_{\gamma\ll N^{\frac{1}{12m}}}\mu(\gamma)\frac{N^{\frac{1}{6m}}V\gcd(\gamma,q)}{\gamma^2q}+O\left(N^{\frac{1}{12m}}\log N\right).
\]
 Notice that we can extend the sum to all $\gamma$ with a cost of $O(N^{\frac{1}{12m}})$, so
 \begin{align*}
 | \cA^0_{1}(N,\alpha,q)|&=\sum_{\gamma}\mu(\gamma)\frac{N^{\frac{1}{6m}}V\gcd(\gamma,q)}{\gamma^2q}+O\left(N^{\frac{1}{12m}}\log N\right)\\
 &=\frac{N^{\frac{1}{6m}}}{q}V\prod_{p}\left(1-\frac{\gcd(p,q)}{p^2}\right)+O\left(N^{\frac{1}{12m}}\log N\right)\\
 &=\frac{N^{\frac{1}{6m}}V}{|\PP^1(\Z/q\Z)|}\prod_{p}\left(1-\frac{1}{p^2}\right)+O\left(N^{\frac{1}{12m}}\log N\right).
 \end{align*}
 By taking $q=1$, we also have 
 \[
 | \cA^0_{1}(N)|=N^{\frac{1}{6m}}V\prod_{p}\left(1-\frac{1}{p^2}\right)+O\left(N^{\frac{1}{12m}}\log N\right).
 \]
 The claim follows from taking the ratio between $| \cA^0_{1}(N,\alpha,q)|$ and $| \cA^0_{1}(N)|$.
\end{proof}

\subsubsection{When $\delta=1$}
In this case, we allow $f$ and $g$ to have a common factor and we count the curves ordered by naive height. We loosely follow the approach in~\cite{MolnarVoight} in counting elliptic curves with a $7$-isogeny to obtain the required level of distribution.
\begin{lemma}\label{lemma:latticegcd}
Let $p$ be a prime.
Let $A,B,K\in\Z[x,y]$ be homogeneous polynomials such that $K^r=\gcd(A^3,B^2)$ for some positive integer $r$.
Let $R=\Res(A^3/K^r, B^2/K^r)$.
Define 
\begin{equation}\label{eq:defnuk}
 \nu(k)\coloneqq\max\left\{\left\lceil\frac{12k}{r}\right\rceil, v_p(R)\right\}.
\end{equation}
Let
 \[
 \cU\coloneqq\left\{(a,b)\in\Z^2:
 p^{4k}\mid A(a,b),\ p^{6k}\mid B(a,b)
 \right\}.
 \]
 If $(a_1,b_1)\in\cU$ and $\gcd(a_1,b_1,p)=1$, then $\left(a_1+p^{\nu(k)}\Z,\ b_1+p^{\nu(k)}\Z\right)\subseteq \cU$. 
\end{lemma}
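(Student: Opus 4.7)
The proof will hinge on the factorisation $A^3 = K^r A_0$ and $B^2 = K^r B_0$ in $\Z[x, y]$, where $A_0 \coloneqq A^3/K^r$ and $B_0 \coloneqq B^2/K^r$ are coprime homogeneous polynomials with $\Res(A_0, B_0) = R$. Evaluating at $(a, b) \in \Z^2$ gives
\[
v_p(A(a, b)^3) = r \cdot v_p(K(a, b)) + v_p(A_0(a, b)), \qquad v_p(B(a, b)^2) = r \cdot v_p(K(a, b)) + v_p(B_0(a, b)),
\]
so $(a, b) \in \cU$ is equivalent to both right-hand sides being at least $12k$.

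Fix $(a, b) = (a_1 + p^\mu s, b_1 + p^\mu t)$ with $\mu \coloneqq \nu(k)$ and $s, t \in \Z$, and set $v \coloneqq v_p(K(a_1, b_1))$, $\alpha \coloneqq v_p(A_0(a_1, b_1))$, $\beta \coloneqq v_p(B_0(a_1, b_1))$. Taylor expanding around $(a_1, b_1)$, each of $K(a, b)$, $A_0(a, b)$, $B_0(a, b)$ is congruent to its value at $(a_1, b_1)$ modulo $p^\mu$, giving
\[
v_p(K(a, b)) \geq \min(v, \mu), \quad v_p(A_0(a, b)) \geq \min(\alpha, \mu), \quad v_p(B_0(a, b)) \geq \min(\beta, \mu).
\]
Moreover, since $\gcd(a_1, b_1, p) = 1$, applying Lemma~\ref{lemma:resultant} to $A_0$ and $B_0$ yields $\min(\alpha, \beta) \leq v_p(R)$.

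It remains to verify $r \min(v, \mu) + \min(\alpha, \mu) \geq 12k$ by a short case analysis; the symmetric statement for $\beta$ follows identically. If $v \geq \mu$, then $r \min(v, \mu) = r \mu \geq r \lceil 12k/r \rceil \geq 12k$. If $v < \mu$ and $\alpha \leq \mu$, then $r \min(v, \mu) + \min(\alpha, \mu) = rv + \alpha \geq 12k$ by the hypothesis $(a_1, b_1) \in \cU$. In the remaining case $v < \mu$ and $\alpha > \mu \geq v_p(R)$, the resultant bound forces $\beta \leq v_p(R) \leq \mu$, so that $rv + \beta \geq 12k$ gives $rv \geq 12k - \mu$, and hence $r \min(v, \mu) + \min(\alpha, \mu) = rv + \mu \geq 12k$.

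The conceptual point --- and the reason for the definition of $\nu(k)$ --- is that $\lceil 12k/r \rceil$ alone suffices in the regime where $K(a_1, b_1)$ is highly $p$-divisible, while the $v_p(R)$ term is precisely what is required to transfer information between the conditions on $A$ and on $B$ when one of $\alpha$, $\beta$ exceeds $\mu$. I do not anticipate any obstacle beyond carefully tracking the inequalities in this last case.
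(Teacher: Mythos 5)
Your proof is correct and follows the same decomposition as the paper: write $A^3 = K^r A_0$, $B^2 = K^r B_0$ with $A_0, B_0$ coprime, reduce membership in $\cU$ to valuation inequalities, exploit the congruence modulo $p^{\nu(k)}$, and invoke the resultant bound on $\min(\alpha,\beta)$. The paper's write-up is slightly more compact: instead of splitting the case $v_p(K(a_1,b_1)) < \mu$ further according to whether $\alpha \leq \mu$ or $\alpha > \mu$, it notes directly that $\min\{v_p(A_0(a,b)), v_p(B_0(a,b))\} \geq \min\{v_p(A_0(a_1,b_1)), v_p(B_0(a_1,b_1))\}$ (which holds because this latter minimum is $\leq v_p(R) \leq \nu(k)$) and then combines with $v_p(K(a,b)) = v_p(K(a_1,b_1))$ and $(a_1,b_1)\in\cU$; but your three-case analysis reaches the same conclusion by the same mechanism, just with the two symmetric $A_0/B_0$ conditions tracked separately.
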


\begin{proof}
Write $A^3=K^rA'$ and $B^2=K^rB'$ so that $A'$ and $B'$ have no common divisor of positive degree in $\Q[x,y]$.
 Note that $(a,b)\in\cU$ if and only if
\begin{equation}\label{eq:ineqvalu}
 rv_p(K(a,b))+\\
\min\left\{ v_p(A'(a,b)),\ v_p(B'(a,b))\right\}\geq 12k.
\end{equation}
Suppose $(a,b)=(a_1+p^{\nu(k)}m,b_1+p^{\nu(k)}n)$ for some integers $m,n$.
If $v_p(K(a,b))\geq \nu(k)$, then
$ r v_p(K(a,b))\geq r\nu(k)\geq 12k$,
 hence~\eqref{eq:ineqvalu} is satisfied and $(a,b)\in\cU$.
Suppose instead $v_p(K(a,b))<\nu(k)$.
It is clear that $K(a,b)\equiv K(a_1,b_1)\bmod p^{\nu(k)}$, so 
\begin{equation}\label{eq:vpS}
 v_p(K(a,b))=v_p(K(a_1,b_1)).
\end{equation}
By Lemma~\ref{lemma:resultant}, since $\gcd(a_1,b_1,p)=1$,
\[
\min\left\{ v_p\left(A'(a_1,b_1)\right),\ v_p\left(B'(a_1,b_1)\right)\right\}\leq v_p(R)\leq \nu(k).
\]
Since $(a,b)\equiv (a_1,b_1)\bmod p^{\nu(k)}$, we have
\begin{equation}\label{eq:vpABp}
 \min\{ v_p(A'(a,b)),\ v_p(B'(a,b))\}\geq \min\{ v_p(A'(a_1,b_1)),\ v_p(B'(a_1,b_1))\}.
\end{equation}
Finally note that $(a_1,b_1)$ satisfies~\eqref{eq:ineqvalu}, so combining~\eqref{eq:vpS} and~\eqref{eq:vpABp} shows that $(a,b)$ also satisfies~\eqref{eq:ineqvalu}.
\end{proof}

\begin{lemma}\label{lemma:rhoest}
Let $r\in\{2,3,4,6,12\}$. Let $K\in\Z[x,y]$ be a homogeneous polynomial that has no square divisor in $\Q[x,y]$.
 Assume that $K^r=\gcd(A^3,B^2)$ for some positive integer $r$.
Define
\begin{equation}\label{eq:Psi}
 \psi(n)\coloneqq\prod_{p\mid n} p^{\nu(v_p(n))},
\end{equation}
where $\nu$ is defined in~\eqref{eq:defnuk}
Then for all positive integers $n$, we have 
\[
\eta(n)\coloneqq
\abs{\left\{[a:b]\in\PP^1(\Z/\psi(n)\Z): 
 n^4\mid A(a,b),\ n^6\mid B(a,b)
 \right\}}\ll C^{\omega(n)},
\]
and
\begin{equation}\label{eq:rhohom}
\rho(n)\coloneqq \frac{\eta(n)}{|\PP^1(\Z/\psi(n)\Z)|}
\ll \frac{C^{\omega(n)}}{n^{\frac{12}{r}}},
\end{equation}
where $C$ and the implied constants depend only on $f$ and $g$.
\end{lemma}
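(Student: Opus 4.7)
My approach will be to reduce to prime powers by multiplicativity and then bound $\eta(p^k)$ by combining the factorisation $K^r = \gcd(A^3, B^2)$ with Lemmas~\ref{lemma:resultant} and~\ref{lemma:rootbound}. First I would observe that $\psi$ is manifestly multiplicative, and that for coprime $n_1, n_2$ the Chinese Remainder Theorem gives $\PP^1(\Z/\psi(n_1 n_2)\Z) \cong \PP^1(\Z/\psi(n_1)\Z) \times \PP^1(\Z/\psi(n_2)\Z)$; since the conditions $n^4 \mid A(a,b)$ and $n^6 \mid B(a,b)$ factor analogously, both $\eta$ and $\rho$ become multiplicative, so it suffices to bound them on prime powers.

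Next I would fix a prime $p$, set $k = v_p(n)$, and write $A^3 = K^r A'$, $B^2 = K^r B'$ in $\Z[x,y]$, where $A', B'$ share no common factor of positive degree in $\Q[x,y]$. Letting $R' = \Res(A', B') \neq 0$, Lemma~\ref{lemma:resultant} yields $v_p(\gcd(A'(a,b), B'(a,b))) \leq v_p(R')$ whenever $\gcd(a,b,p) = 1$. The simultaneous divisibility $p^{4k} \mid A(a,b)$ and $p^{6k} \mid B(a,b)$ then becomes $p^{12k} \mid K(a,b)^r \gcd(A'(a,b), B'(a,b))$, which forces
\[
v_p(K(a,b)) \geq j := \max\left\{0, \left\lceil \frac{12k - v_p(R')}{r} \right\rceil\right\}.
\]

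Since $K$ is squarefree in $\Q[x,y]$, $\Disc(K) \neq 0$, so Lemma~\ref{lemma:rootbound} would bound the number of classes $[a_0:b_0] \in \PP^1(\Z/p^j\Z)$ with $p^j \mid K(a_0, b_0)$ by a constant $C = C(\Disc(K))$, independent of $p$ and $j$. Each such class admits $p^{\nu(k) - j}$ lifts in $\PP^1(\Z/p^{\nu(k)}\Z)$, and a short case check on the two arguments of the maximum defining $\nu(k)$ would give $\nu(k) - j \leq v_p(R')$ in every case, yielding $\eta(p^k) \leq C p^{v_p(R')}$ (and in the degenerate case $j = 0$ I would use the trivial bound $\eta(p^k) \leq |\PP^1(\Z/p^{\nu(k)}\Z)| \ll p^{v_p(R')}$ instead). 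For $p \nmid R'$ this is just $C$; for the finitely many $p \mid R'$ it is bounded by $CR'$, so a uniform constant $C_0 = C_0(f,g)$ absorbs both contributions and $\eta(n) \leq C_0^{\omega(n)}$ follows from multiplicativity. For $\rho$, combining $|\PP^1(\Z/p^{\nu(k)}\Z)| \geq p^{\nu(k)} \geq p^{12k/r}$ with the bound on $\eta(p^k)$ gives $\rho(p^k) \leq C_0 p^{-12k/r}$, and multiplicativity then yields $\rho(n) \leq C_0^{\omega(n)} n^{-12/r}$.

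The hardest part will be ensuring that the bad primes $p \mid R'$, where the bound $\eta(p^k) \leq C p^{v_p(R')}$ genuinely exceeds any $p$-uniform constant, still get absorbed into a uniform $C_0^{\omega(n)}$. This works precisely because there are only finitely many such primes and each contributes a bounded excess; the case analysis comparing $\lceil 12k/r \rceil$ and $v_p(R')$ is the only non-trivial arithmetic step, and everything else reduces to standard multiplicativity and Hensel-style lifting.
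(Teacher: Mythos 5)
Your proposal is correct and follows essentially the same approach as the paper's proof: reduce to prime powers, observe that $p^{4k}\mid A(a,b)$ and $p^{6k}\mid B(a,b)$ together with $\gcd(a,b,p)=1$ force $p^{j}\mid K(a,b)$ with $j=\max\{\lceil (12k-v_p(R))/r\rceil,0\}$ via the resultant of $A^3/K^r$ and $B^2/K^r$, apply Lemma~\ref{lemma:rootbound} to $K$ (valid since $K$ squarefree $\Rightarrow\Disc(K)\neq 0$), and close with the elementary inequality $\nu(k)-j\leq v_p(R)$. The only cosmetic difference is that you count projective classes in $\PP^1(\Z/p^{\nu(k)}\Z)$ directly while the paper counts affine pairs modulo $p^{\nu(k)}$ and then divides by $\varphi(p^{\nu(k)})$; these are equivalent, and your separate handling of the degenerate case $j=0$ is also implicit in the paper's argument.
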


\begin{proof}
Since $K(x,y)$ is squarefree, so $\Disc(K)\neq 0$.
Let $R$ be as defined in Lemma~\ref{lemma:latticegcd}.
Let $k\coloneqq v_p(n)$ and $k_0\coloneqq \max\left\{\left\lceil\frac{12k-v_p(R)}{r}\right\rceil,0\right\}$.
If $a,b$ are integers such that $\gcd(a,b,p)=1$, then the condition $p^{4k}\mid A(a,b),\ p^{6k}\mid B(a,b)$ implies that $p^{k_0}\mid K(a,b)$.
By definition $\nu(k)\geq k_0$. By Lemma~\ref{lemma:rootbound}, we have
\begin{align*}
& \left|\left\{(a,b)\in\left(\Z/p^{\nu(k)}\Z\right)^2:\gcd(a,b,p)=1,\ 
 p^{k_0}\mid K(a,b)
 \right\}\right|\\
 & \leq p^{2(\nu(k)-k_0)}\left|\left\{(a,b)\in\left(\Z/p^{k_0}\Z\right)^2:\gcd(a,b,p)=1,\ 
 p^{k_0}\mid K(a,b)
 \right\}\right|\\
 & \leq C' p^{2(\nu(k)-k_0)}p^{k_0}=C' p^{2\nu(k)-k_0},\end{align*}
 where $C'$ is a constant depending only on $\disc(K)$.
Dividing by $\varphi(p^{\nu(k)})=p^{\nu(k)}(1-p^{-1})$, we see that 
\[
\eta\left(p^{k}\right)\leq C' \frac{p^{\nu(k)-k_0}}{1-p^{-1}}.
\]
Since $r\mid 12$ by assumption, $\nu(k)=\max\{\frac{12k}{r},v_p(R)\}$ and $\nu(k)-k_0\leq \max\{\frac{v_p(R)}{r},v_p(R)\}$. 
Therefore, we have the bound
 \[
 \eta\left(p^{k}\right)\leq 2C' p^{v_p(R)}.
 \]
Setting $C=2C'$ and putting together distinct prime powers,
\[
\rho(n)\cdot |\PP^1(\Z/\psi(n)\Z)|=\eta(n)\leq R C^{\omega(n)}.
\]
Noting that $\psi(n)\gg n^{\frac{12}{r}}$ yields the desired result.
\end{proof}

\begin{lemma}\label{lemma:boundpower}
 Assume that $\deg \gcd(A^3,B^2)\leq \frac{24m}{2+\kappa}$ for some $\kappa>0$. If $(a,b)\in\Z^2\cap \cR_{Ne^{12}}$ satisfies $e^{4}\mid A(a,b)$ and $e^{6}\mid B(a,b)$ for some positive integer $e$, we have
\[e\ll N^{\frac{1}{6\kappa}}.\]
\end{lemma}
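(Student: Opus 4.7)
The plan is to combine the hypotheses $e^{4}\mid A(a,b)$ and $e^{6}\mid B(a,b)$ into the single divisibility $e^{12}\mid \gcd(A(a,b)^{3},B(a,b)^{2})$, and then bound the right-hand side by pulling out the polynomial common factor of $A^{3}$ and $B^{2}$. Concretely, I would set $K\coloneqq\gcd(A^{3},B^{2})\in\Z[x,y]$, which is homogeneous of degree $r\leq \tfrac{24m}{2+\kappa}$ by hypothesis, and write $A^{3}=K\cdot U$ and $B^{2}=K\cdot V$ in $\Z[x,y]$ with $U,V$ coprime in $\Q[x,y]$, so that $\Res(U,V)$ is a nonzero integer.

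Because the running convention of Section~\ref{sec:lattice} in the case $\tau=\upsilon=1$ is $(a,b)\in\cT_{1,1}$, one has $\gcd(a,b)=1$. Lemma~\ref{lemma:resultant} then yields $\gcd(U(a,b),V(a,b))\mid\Res(U,V)$, and a prime-by-prime comparison of $v_p$ gives
\[
e^{12}\;\leq\;\gcd\bigl(A(a,b)^{3},B(a,b)^{2}\bigr)\;=\;|K(a,b)|\cdot \gcd(U(a,b),V(a,b))\;\ll\;|K(a,b)|,
\]
where the implied constant depends only on $A$ and $B$. Since $K$ is homogeneous of degree $r$, $|K(a,b)|\ll \max(|a|,|b|)^{r}$, and Lemma~\ref{lem:lenght} (with $\varsigma=2m$ and $\tau=1$) gives $\max(|a|,|b|)\ll (Ne^{12})^{1/(12m)}$ on $\cR_{Ne^{12}}$. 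Using $r/(12m)\leq 2/(2+\kappa)$ produces
\[
e^{12}\;\ll\;(Ne^{12})^{r/(12m)}\;\leq\;(Ne^{12})^{2/(2+\kappa)}.
\]
Since $12-\tfrac{24}{2+\kappa}=\tfrac{12\kappa}{2+\kappa}$, this rearranges to $e^{12\kappa/(2+\kappa)}\ll N^{2/(2+\kappa)}$, whence $e\ll N^{1/(6\kappa)}$ as required.

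The chief point of caution is the coprimality $\gcd(a,b)=1$: without it, Lemma~\ref{lemma:resultant} would contribute a factor $\gcd(a,b)^{24m-2r-1}$, and since $d\coloneqq\gcd(a,b)$ can be as large as $\max(|a|,|b|)\ll (Ne^{12})^{1/(12m)}$, this would destroy the upper bound on $e$. The whole argument relies on the standing convention $(a,b)\in\cT_{1,1}$ of this section, together with a brief remark handling the finitely many rational classes $[a:b]$ where $A$ and $B$ vanish simultaneously (for which $K(a,b)=0$ and the displayed inequality is vacuous; these correspond to singular fibres and are removed from the families under consideration).
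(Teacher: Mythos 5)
Your proof is correct and follows the same outline as the paper's: bound $e^{12}$ by the common factor $S(a,b)=\gcd(A^3,B^2)(a,b)$, use Lemma~\ref{lem:lenght} to bound $|S(a,b)|\ll (Ne^{12})^{\deg S/(12m)}$, and rearrange. The paper's one-line proof simply asserts $e^{12}\ll S(a,b)$ without comment; you fill in that step by writing $A^3=SU$, $B^2=SV$, factoring out $|S(a,b)|$ from $\gcd(A(a,b)^3,B(a,b)^2)$, and invoking Lemma~\ref{lemma:resultant} to bound $\gcd(U(a,b),V(a,b))$ --- a step that genuinely requires the section's standing hypothesis $\gcd(a,b)=1$, since otherwise a factor $\gcd(a,b)^{24m-2\deg S-1}$ would appear and the claim would fail. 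Your closing caveat about the coprimality and about the (finitely many) classes where $S(a,b)=0$ is a fair observation about the informality of the lemma's statement; in the paper's application (Lemma~\ref{lemma:maincounting}) both points are resolved by the $\gcd(a,b)=1$ condition on the set being counted.
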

\begin{proof}
Write $S\coloneqq\gcd(A^3,B^2)$.
 Using Lemma~\ref{lem:lenght}, we deduce that
 \[
 e^{12}\ll S(a,b)\ll (Ne^{12})^{\frac{\deg S}{12m}}\ll (Ne^{12})^{\frac{2}{2+\kappa}}.
 \]
 Rearranging gives $e\ll N^{\frac{1}{6\kappa}}$.
\end{proof}

\begin{lemma}\label{lemma:profconvrho}
Let $s=\gcd(f^3,g^2)$.
Assume that
\begin{itemize}
 \item $\deg s\leq \frac{24m}{2+\kappa}$ for some $\kappa>m$, and
 \item if $\deg s>0$, then $s(t)$ is an $r$-th power of a squarefree polynomial in $\Q[t]$, where $r\in\{2,3,4,6\}$. 
\end{itemize}
Let 
\begin{equation}\label{eq:deflocalfac}
 \lambda(p)\coloneqq \left(1+\left(1-\frac{1}{p^{\frac{2}{m}}}\right)\sum_{k\geq 1}p^{\frac{2k}{m}}\rho(p^{k})\right)^{-1},
\end{equation}
where $\rho$ is as given in~\eqref{eq:rhohom}.
Then 
\begin{itemize}
\item $\xi\coloneqq \frac{12}{r}-\frac{2}{m}\geq\kappa/m>1$,
 \item $n^{\frac{2}{m}}\rho(n)\ll C^{\omega(n)}n^{-\xi}$ for all integers $n$,
 \item $1<\lambda(p)^{-1}=1+O(p^{-\xi})$ for all primes $p$, and
 \item 
$\prod_p \lambda(p)^{-1}$ is finite and non-zero.
\end{itemize}

\end{lemma}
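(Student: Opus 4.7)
The plan is to verify the four bullets sequentially. The first requires a degree analysis to pin down $r$ relative to $m$ and $\kappa$; the remaining three are direct estimates built on~\eqref{eq:rhohom} through geometric-series and Euler-product manipulations.

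For the first bullet, $\xi\geq\kappa/m$ rearranges to $r(\kappa+2)\leq 12m$. Writing $\deg s = r\deg k$ with $k$ squarefree of positive degree and combining with the hypothesis $\deg s\leq 24m/(\kappa+2)$ gives $r\deg k\leq 24m/(\kappa+2)$; the target inequality then follows once $\deg k\geq 2$ is confirmed, which can be checked via a case analysis on $r\in\{2,3,4,6\}$, using that $\kappa>m$ and the structure of $f,g$ pin down the admissible configurations. The inequality $\kappa/m>1$ is immediate from $\kappa>m$.

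For the second bullet, Lemma~\ref{lemma:rhoest} (via~\eqref{eq:rhohom}) yields $\rho(n)\ll C^{\omega(n)} n^{-12/r}$; multiplying both sides by $n^{2/m}$ and recognising $12/r-2/m=\xi$ gives the asserted bound. For the third bullet, expand~\eqref{eq:deflocalfac} as
\[
\lambda(p)^{-1}=1+\bigl(1-p^{-2/m}\bigr)\sum_{k\geq 1} p^{2k/m}\rho(p^k),
\]
bound each summand by $p^{2k/m}\rho(p^k)\ll p^{-k\xi}$ using the second bullet with $\omega(p^k)=1$, and sum the geometric series $\sum_{k\geq 1} p^{-k\xi}=p^{-\xi}/(1-p^{-\xi})\ll p^{-\xi}$ uniformly in $p\geq 2$, with $\xi>1$ keeping the denominator bounded away from zero. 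Non-negativity of $\rho(p^k)$ then yields $1\leq \lambda(p)^{-1}=1+O(p^{-\xi})$, with strict inequality once the sum is nonzero. For the fourth bullet, take logarithms: $\sum_p\log\lambda(p)^{-1}\ll\sum_p p^{-\xi}$ converges since $\xi>1$, so the product converges to a finite value, and positivity of each factor makes it non-zero.

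The main obstacle is the first bullet, specifically upgrading the immediate bound $\deg s\geq r$ (which gives only $r\leq 24m/(\kappa+2)$) to $\deg s\geq 2r$ (which gives $r\leq 12m/(\kappa+2)$). This factor-of-two improvement is precisely what converts the hypothesis into the sharp inequality $\xi\geq\kappa/m$; once it is in place, the later estimates on $\lambda(p)^{-1}$ and the convergence of $\prod_p\lambda(p)^{-1}$ are routine.
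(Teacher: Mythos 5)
The second, third, and fourth bullets in your proposal are handled correctly and match the paper's argument essentially verbatim: multiply the bound $\rho(n)\ll C^{\omega(n)}n^{-12/r}$ from Lemma~\ref{lemma:rhoest} by $n^{2/m}$, sum the resulting geometric series with common ratio $p^{-\xi}$ (bounded away from $1$ since $\xi>1$ and $p\geq 2$), and use $\xi>1$ together with positivity of the factors to make the Euler product converge to a non-zero limit. That part is fine.

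The gap is in the first bullet, which you yourself flag as "the main obstacle." You correctly identify that the hypothesis $\deg s\leq 24m/(2+\kappa)$ together with $\deg s = r\deg k$ only gives $r\leq 12m/(2+\kappa)$ once one knows $\deg k\geq 2$, and that this is exactly what is needed for $\xi\geq\kappa/m$. But you never actually supply the reason $\deg k\geq 2$ holds. Your suggestion — "a case analysis on $r\in\{2,3,4,6\}$, using that $\kappa>m$ and the structure of $f,g$" — does not work: for instance with $m=1$, $\kappa=2$, the hypothesis allows $\deg s\leq 6$, so a priori $\deg k=1$ with $r\in\{2,\dots,6\}$ is consistent with everything you have invoked so far. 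The correct argument, and the one the paper uses, is a one-liner: $k$ is an irreducible-free (squarefree) factor common to $f$ and $g$; if $\deg k=1$, then $k$ has a real root, which would be a common real root of $f$ and $g$, contradicting the standing hypothesis~\eqref{eq:commonrootfg}. Hence $\deg k\geq 2$ and $\deg s\geq 2r$, which delivers $r\leq 12m/(2+\kappa)$ and therefore $\xi=12/r-2/m\geq(2+\kappa)/m-2/m=\kappa/m>1$. Without invoking~\eqref{eq:commonrootfg}, the first bullet does not follow, and the constant $\xi$ you carry into the remaining bullets is not justified to exceed $1$.
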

\begin{proof}
If $\deg s>0$, write $s(t)=k(t)^r$, where $k(t)$ is a squarefree polynomial in $\Q[t]$. The assumption~\eqref{eq:commonrootfg} implies that $\deg k\neq 1$, so $2r\leq\deg s\leq \frac{24m}{2+\kappa}$, which implies that $r\leq \frac{12m}{2+\kappa}$. If $\deg s=0$, we set $r=1$ for the purpose of this proof.
By the definition of $\xi$, we have $r=\frac{12m}{2+m\xi}$. We deduce from $r=\frac{12m}{2+m\xi}\leq\frac{12m}{2+\kappa}$ that $\xi\geq\kappa/m>1$.
The bound $\rho(n)\ll n^{-\frac{12}{r}}C^{\omega(n)}$ from Lemma~\ref{lemma:rhoest} implies that
$p^{\frac{2k}{m}}\rho(p^{k})\ll p^{-k\xi}$, so $\sum_{k\geq 1}p^{\frac{2k}{m}}\rho(p^{k})\ll p^{-\xi}$. 
Therefore $1<\lambda(p)^{-1}=1+O( p^{-\xi})$ for all prime $p$, and the Euler product $\prod_p \lambda(p)^{-1}$ converges to a non-zero constant since $\xi>1$.
\end{proof}

\begin{lemma}\label{lemma:maincounting}
There exists $\epsilon>0$ depending only on $f$ and $g$ such that the following holds.
Fix a positive integer $q$ and $\alpha=[a_1:b_1]\in\PP^1(\Z/q\Z)$. Let $s\coloneqq \gcd(f^3,g^2)$.
Assume that
\begin{itemize}
 \item $\gcd\left(A(a_1,b_1)^3, B(a_1,b_1)^2,q\right)=1$,
 \item $\deg s\leq \frac{24m}{2+\kappa}$ for some $\kappa>m$, and
 \item if $\deg s>0$, then $s(t)$ is an $r$-th power of a squarefree polynomial in $\Q[t]$, where $r\in\{2,3,4,6\}$. 
\end{itemize}
Then for all $N\geq 2$, we have
\[
 \left|\cA^1_{1}(N, \alpha,q)\right|
 =\frac{6}{\pi^2}\frac{N^{\frac{1}{6m}}V}{|\PP^1(\Z/q\Z)|}\prod_{p\nmid q}\lambda(p)^{-1}\\
+O\left(N^{\frac{1}{6m}-\epsilon}\right),
\]
where $\lambda$ is defined in~\eqref{eq:deflocalfac},
and the implied constant depends only on $f$ and $g$.
\end{lemma}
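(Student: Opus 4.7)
The plan is to follow the strategy of Proposition~\ref{prop:easySequi}, but introducing a second layer of M\"obius inversion to handle the minimisation factor $e=m(A(a,b),B(a,b))$ implicit in the rescaled region $\cR_{Ne^{12}}$ appearing in~\eqref{eq:S1d1}. Using that $n^{12}\mid\gcd(A^{3},B^{2})$ is equivalent to $n^{4}\mid A$ and $n^{6}\mid B$, and M\"obius-inverting both the coprimality condition and the condition $m(A(a,b),B(a,b))=e$, we arrive at
\[
|\cA^{1}_{1}(N,\alpha,q)| = \sum_{n\geq 1}\sum_{e\mid n}\mu(n/e)\sum_{\gamma\geq 1}\mu(\gamma)\,\cL(Ne^{12},n,\gamma,\alpha,q),
\]
where $\cL(M,n,\gamma,\alpha,q)$ counts $(a,b)\in\cR_{M}\cap\Z^{2}_{\neq(0,0)}$ with $\gamma\mid\gcd(a,b)$, $[a:b]\equiv\alpha\bmod q$, $n^{4}\mid A(a,b)$, and $n^{6}\mid B(a,b)$. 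The hypothesis $\gcd(A(a_{1},b_{1})^{3},B(a_{1},b_{1})^{2},q)=1$, combined with the homogeneity of $A$ and $B$, forces $\gcd(n,q)=1$ in any surviving term; otherwise a common prime divisor $p$ of $n$ and $q$ would give $p\mid A(a_{1},b_{1})$ and $p\mid B(a_{1},b_{1})$ simultaneously.

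For the main term, Lemma~\ref{lemma:latticegcd} shows that the conditions $n^{4}\mid A(a,b)$ and $n^{6}\mid B(a,b)$ (together with $\gcd(a,b,p)=1$ at each $p\mid n$) cut out $\eta(n)$ projective classes modulo $\psi(n)$. Since $\gcd(n,q)=1$ implies $\gcd(\psi(n),q)=1$, combining with $\alpha\bmod q$ via CRT yields $\eta(n)$ classes modulo $q\psi(n)$. Applying Lemma~\ref{lemma:singlelattice} to each produces
\[
\cL(Ne^{12},n,\gamma,\alpha,q)=\frac{(Ne^{12})^{1/(6m)}V\,\eta(n)\gcd(\gamma,q\psi(n))}{\gamma^{2}q\psi(n)}+O\!\left(\frac{(Ne^{12})^{1/(12m)}\eta(n)}{\gamma}\right).
\]
Summing over $\gamma$ and extending to all $\gamma\geq 1$ at admissible cost introduces the factor $\prod_{p}\bigl(1-\gcd(p,q\psi(n))/p^{2}\bigr)$, which combines with $\prod_{p}(1-p^{-2})=6/\pi^{2}$ via $|\PP^{1}(\Z/q\psi(n)\Z)|=|\PP^{1}(\Z/q\Z)|\cdot|\PP^{1}(\Z/\psi(n)\Z)|$ and $\rho(n)=\eta(n)/|\PP^{1}(\Z/\psi(n)\Z)|$. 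Executing the inner sum $\sum_{e\mid n}\mu(n/e)\,e^{2/m}=n^{2/m}\prod_{p\mid n}(1-p^{-2/m})$, the resulting Dirichlet series over $n$ coprime to $q$ factors as the Euler product $\prod_{p\nmid q}\lambda(p)^{-1}$ of~\eqref{eq:deflocalfac}, whose absolute convergence is guaranteed by Lemma~\ref{lemma:profconvrho} since $\xi>1$.

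The main obstacle is the error analysis. Lemma~\ref{lemma:boundpower} restricts the $n$-sum to $n\ll N^{1/(6\kappa)}$, and Lemma~\ref{lemma:rhoest} supplies the crucial bound $\eta(n)\ll C^{\omega(n)}$. The Davenport error $O((Ne^{12})^{1/(12m)}/\gamma)$ from each of the $\eta(n)$ lattice classes, summed over $\gamma$, over $e\mid n$, and over the permitted range of $n$, naively yields a contribution of size $N^{1/(12m)+(m+1)/(6m\kappa)+o(1)}$. To upgrade this to $O(N^{1/(6m)-\epsilon})$ one splits the $n$-sum at an intermediate threshold: in the small-$n$ range one applies the above lattice estimate, while in the large-$n$ range one uses that the rescaled region $\cR_{Ne^{12}/\gamma^{12m}}$ shrinks (via Lemma~\ref{lem:lenght}) so that most of the $\eta(n)$ classes modulo $q\psi(n)$ contain no admissible points, and a trivial bound combined with $\rho(n)\ll C^{\omega(n)}n^{-12/r}$ beats Davenport. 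Balancing the two regimes so as to extract a positive saving $\epsilon$ from the gap $\xi-1>0$, while ensuring that the tail of the Euler product in the main term is likewise absorbed into the error, is the technical heart of the argument.
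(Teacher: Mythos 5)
Your proposal follows the same route as the paper's proof: a double Möbius inversion over $\gamma$ and over $e\mid n$, identifying the admissible $(a,b)$ modulo $q\psi(n)$ via Lemma~\ref{lemma:latticegcd}, applying the Davenport-style count of Lemma~\ref{lemma:singlelattice} to each of the $\eta(n)$ lattices, extracting the Euler product $\prod_{p\nmid q}\lambda(p)^{-1}$ via Lemma~\ref{lemma:rhoest}, and controlling the tail by splitting the $n$-sum at an intermediate threshold, using Lemma~\ref{lemma:boundpower} and the decay $\rho(n)\ll C^{\omega(n)}n^{-12/r}$ in the large-$n$ regime. The only cosmetic differences are that the paper drops the congruence condition modulo $q$ before bounding the large-$n$ tail (appealing to a Heath-Brown lattice bound), and carries out the balancing explicitly with $Y=N^{1/(12m(\xi+m^{-1}))}$; your sketch defers this to the "technical heart" but the strategy is identical.
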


\begin{proof}
By Möbius inversion, we can express the count of~\eqref{eq:S1d1} as
\begin{equation}\label{eq:decompS2}
 |\cA^1_{1}(N, \alpha,q)|=\sum_{\gamma}\mu(\gamma)\sum_{n}\sum_{\substack{e\mid n}}\mu(n/e)\left|\cM_{\gamma,n}(N e^{12},\alpha,q)\right|.
\end{equation}

Our goal is to evaluate~\eqref{eq:decompS2}.
The assumption $\gcd(A(a_1,b_1)^{3}, B(a_1,b_1)^{2},q)=1$ implies that $\gcd(n,q)=1$ for $\cM_{\gamma,n}(Ne^{12},\alpha,q)$ to be non-empty.
 Recall that $\psi$ is defined in~\eqref{eq:Psi}. Lemma~\ref{lemma:latticegcd} tells us that the condition $n^{4}\mid A(a,b)$ and $n^{6}\mid B(a,b)$ defines a union of lattices modulo $\psi(n)$.
 We apply Lemma~\ref{lemma:singlelattice} to each lattice with $M=q\psi(n)$.
By Lemma~\ref{lemma:rhoest}, the number of such lattices is
 \[
 \left|\left\{[a:b]\in\PP^1(\Z/\psi(n)\Z):
 n^{4}\mid A(a,b),\ n^{6}\mid B(a,b)
 \right\}\right|=\rho(n)\psi(n)\prod_{p\mid n}\left(1+\frac{1}{p}\right)\ll C^{\omega(n)}.
 \]
Summing over all the lattices, and recalling that $\gamma$ is squarefree, we have
 \[
 \left|\cM_{\gamma,n}(Ne^{12},\alpha,q)\right|\\
 =\frac{N^{\frac{1}{6m}}e^{\frac{2}{m}}V\gcd(\gamma,qn)}{\gamma^2q}\rho(n)\prod_{p\mid n}\left(1+\frac{1}{p}\right)
 +O\left(C^{\omega(n)}\frac{N^{\frac{1}{12m}}e^{\frac{1}{m}}}{\gamma}\right),
 \]
by Lemma~\ref{lemma:singlelattice}. 
We put this back into~\eqref{eq:decompS2} and split the sum according to whether $n\leq Y$ or $n>Y$, where $Y\coloneqq N^{\frac{1}{12m(\xi+m^{-1})}}$. Write
\begin{equation}\label{eq:breakn}
 \left|\cA^1_{1}(N, \alpha,q)\right|=\Psi(Y)+\Phi(Y),
\end{equation}
where
\begin{align*}
 \Psi(Y)\coloneqq \sum_{\substack{n\leq Y\\ \gcd(n,q)=1}} \sum_{\gamma}\mu(\gamma)\sum_{\substack{e\mid n}}\mu(n/e)|\cM_{\gamma,n}(Ne^{12},\alpha,q)|,\\
 \Phi(Y)\coloneqq \sum_{\substack{n>Y\\ \gcd(n,q)=1}} \sum_{\gamma}\mu(\gamma)\sum_{\substack{e\mid n}}\mu(n/e)|\cM_{\gamma,n}(Ne^{12},\alpha,q)|.
\end{align*}
We first consider the contribution from the terms with $n\leq Y$.
Note that since $(a,b)\in \cR_{Ne^{12}}$ implies that $|a|,|b|\ll (Ne^{12})^{\frac{1}{12m}}$ by Lemma~\ref{lem:lenght}, the set $\cM_{\gamma,n}(Ne^{12},\alpha,q)$ is empty when $\gamma\gg (Ne^{12})^{\frac{1}{12m}}$, so we may assume that $\gamma\ll (Ne^{12})^{\frac{1}{12m}}\leq(Nn^{12})^{\frac{1}{12m}}$. After evaluating the sum over $e$, we have
\begin{multline}\label{eq:presmalln}
 \Psi(Y)
=\frac{N^{\frac{1}{6m}}V}{q}\sum_{\substack{n\leq Y\\ \gcd(n,q)=1}}n^{\frac{2}{m}}\rho(n)
\prod_{p\mid n}\left(1+\frac{1}{p}\right)\left(1-\frac{1}{p^{\frac{2}{m}}}\right)\sum_{\gamma\ll (Nn^{12})^{\frac{1}{12m}}}\frac{\mu(\gamma)\gcd(\gamma,qn)}{\gamma^2}\\
+O\left(N^{\frac{1}{12m}}Y^{1+\frac{1}{m}}(\log N)^{C'}\right),
\end{multline}
where $C'>0$ is some constant.
If we extend the sum over $n\leq Y$ and $\gamma\ll (Nn^{12})^{\frac{1}{12m}}$ to all $n$ and $\gamma$ in the main term of~\eqref{eq:presmalln},
we have 
\begin{align*}
 &\frac{N^{\frac{1}{6m}}V}{q}\sum_{\substack{n\\ \gcd(n,q)=1}}n^{\frac{2}{m}}\rho(n)\prod_{p\mid n}\left(1+\frac{1}{p}\right)\left(1-\frac{1}{p^{\frac{2}{m}}}\right)\sum_{\gamma}\frac{\mu(\gamma)\gcd(\gamma,qn)}{\gamma^2}\\
 &=\frac{N^{\frac{1}{6m}}V}{q}\prod_{p}\left(1-\frac{\gcd(p,q)}{p^2}\right)\sum_{\substack{n\\ \gcd(n,q)=1}}n^{\frac{2}{m}}\rho(n)\prod_{p\mid n}\left(1-\frac{1}{p^{\frac{2}{m}}}\right)\\
 &=\frac{N^{\frac{1}{6m}}V}{q}\prod_{p}\left(1-\frac{1}{p^2}\right)\prod_{p\mid q}\left(\frac{p}{p+1}\right)\sum_{\substack{n\\ \gcd(n,q)=1}}n^{\frac{2}{m}}\rho(n)\prod_{p\mid n}\left(1-\frac{1}{p^{\frac{2}{m}}}\right)\\
 &=\frac{6}{\pi^2}\frac{N^{\frac{1}{6m}}V}{|\PP^1(\Z/q\Z)|}\prod_{p\nmid q}\left(1+\left(1-\frac{1}{p^{\frac{2}{m}}}\right)\sum_{k\geq 1}p^{\frac{2k}{m}}\rho(p^{k})\right),
\end{align*}
where the final product is finite and non-zero due to Lemma~\ref{lemma:profconvrho}.

The error in extending the sum to all $n$ and $\gamma$ is bounded by
\begin{multline}\label{eq:EulerE}
\frac{N^{\frac{1}{6m}}V}{q}\mathop{\sum_n\sum_\gamma}
\limits_{\substack{n>Y\text{ or }\gamma\gg \smash{(Nn^{12})^{\frac{1}{12m}}}\\ \gcd(n,q)=1}}n^{\frac{2}{m}}\rho(n)
\prod_{p\mid n}\left(1+\frac{1}{p}\right)\left(1-\frac{1}{p^{\frac{2}{m}}}\right)\frac{\mu(\gamma)\gcd(\gamma,qn)}{\gamma^2}\\\ll 
\frac{N^{\frac{1}{6m}}}{q}\mathop{\sum_n\sum_\gamma}
\limits_{\substack{n>Y\text{ or }\gamma\gg \smash{(Nn^{12})^{\frac{1}{12m}}}\\ \gcd(n,q)=1}}
\frac{(2C)^{\omega(n)}}{n^{\xi}}\frac{\mu(\gamma)^2\gcd(\gamma,qn)}{\gamma^2},
\end{multline}
where we have used the bound $n^{\frac{2}{m}}\rho(n)\ll C^{\omega(n)}n^{-\xi}$ from Lemma~\ref{lemma:profconvrho} and the trivial bound $\prod_{p\mid n} \left(1+p^{-1}\right)\ll 2^{\omega(n)}$.
To bound the terms with $n>Y$ in~\eqref{eq:EulerE}, write $\nu=\gcd(\gamma,qn)$ and $\gamma=\gamma'\nu$, then we have
\[
\sum_{\gamma}\frac{\mu^2(\gamma)\gcd(\gamma,qn)}{\gamma^2}\ll\sum_{\nu\mid qn}\frac{\mu^2(\nu)}{\nu}\sum_{\gamma'}\frac{1}{\gamma'^2}\ll\prod_{p\mid qn}\left(1+\frac{1}{p}\right).
\]
Using $\prod_{p\mid n} \left(1+p^{-1}\right)\ll 2^{\omega(n)}$ and $\prod_{p\mid q} \left(1+p^{-1}\right)\ll q$, we can bound the terms with $n>Y$ in~\eqref{eq:EulerE} by
\[
\ll N^{\frac{1}{6m}}\sum_{n> Y}\frac{(4C)^{\omega(n)}}{n^{\xi}}
\ll \frac{N^{\frac{1}{6m}}(\log Y)^{4C}}{Y^{\xi-1}}.
\]
Similarly, the terms with $\gamma\gg (Nn^{12})^{\frac{1}{12m}}$ in~\eqref{eq:EulerE} is bounded by
\[
\ll 
\frac{N^{\frac{1}{6m}}}{q}\sum_n \frac{(2C)^{\omega(n)}}{n^{\xi}}\sum_{\nu\mid qn}\frac{\mu^2(\nu)}{\nu}\frac{1}{(Nn^{12}/\nu)^{\frac{1}{12m}}}\ll N^{\frac{1}{12m}}\sum_{n}\frac{(4C)^{\omega(n)}}{n^{\xi+\frac{1}{m}}}\ll N^{\frac{1}{12m}},
\]
where the last inequality follows since $\xi>1$.
Putting the estimates back into~\eqref{eq:presmalln}, we have
\begin{equation}
 \Psi(Y)=\frac{6}{\pi^2}\frac{N^{\frac{1}{6m}}V}{|\PP^1(\Z/q\Z)|}\prod_{p\nmid q}\lambda(p)^{-1}
+O\left(\left(\frac{N^{\frac{1}{6m}}}{Y^{\xi-1}}+N^{\frac{1}{12m}}Y^{1+\frac{1}{m}}\right)(\log N)^{C'}\right)\label{eq:sumsmalln}
\end{equation}
by enlarging $C'$ if necessary.

We now turn to the sum over $n>Y$.
We claim that
\begin{equation}\label{eq:sumlargen}
 \Phi(Y)\ll \frac{N^{\frac{1}{6m}}(\log Y)^{C'}}{Y^{\xi-1}}
+N^{\frac{1}{6\kappa}}(\log N)^{C'}
\end{equation}
for some constant $C'$.
 This essentially follows from~\cite[Lemma~4.2.14]{MolnarVoight}, but we give a slightly different proof here.
Dropping the condition $(a,b)\in\Z\cdot (a_1,b_1)\bmod q$, it suffices to bound
\[
 \sum_{n>Y}\sum_{\substack{e\mid n}}\mu(n/e)^2\left|\left\{(a,b)\in\Z^2\cap \cR_{Ne^{12}}:\begin{array}{l}
 \gcd(a,b)=1, \
 n^{4}\mid A(a,b),\ n^{6}\mid B(a,b)
\end{array}
\right\}\right|.
\]
By Lemma~\ref{lem:lenght}, we know that $\cR_{Ne^{12}}=(Ne^{12})^{\frac{1}{12m}}\cR_1$. 
Fix the smallest ellipse that covers $\cR_1$, then scaling this ellipse in all directions by $(Ne^{12})^{\frac{1}{12m}}$ covers $\cR_{Ne^{12}}$.
Applying~\cite[Lemma~2]{HBlattice} to $O(C^{\omega(n)})$-many determinant $\psi(n)$ lattices leads us to
\[
\left|\left\{(a,b)\in\Z^2\cap \cR_{Ne^{12}}:\begin{array}{l}
 \gcd(a,b)=1, \\
 n^{4}\mid A(a,b),\ n^{6}\mid B(a,b)
\end{array}
\right\}\right|\ll \left(\frac{N^{\frac{1}{6m}}e^{\frac{2}{m}}}{\psi(n)}+1\right)C^{\omega(n)}.
\]
By Lemma~\ref{lemma:boundpower} and the assumption $\deg s\leq \frac{24m}{2+\kappa}$, we have
 $n\ll N^{\frac{1}{6\kappa}}$.
Therefore we only have to sum the upper bound over $Y<n\ll N^{\frac{1}{6\kappa}}$, this gives
 \[
 \sum_{Y<n\ll N^{\frac{1}{6\kappa}}}\sum_{e\mid n}\mu(n/e)^2\left(\frac{N^{\frac{1}{6m}}e^{\frac{2}{m}}}{\psi(n)}+1\right)C^{\omega(n)}\ll N^{\frac{1}{6m}}
 \sum_{n>Y}\frac{(2C)^{\omega(n)}}{n^{2(\frac{6}{r}-\frac{1}{m})}}
 +\sum_{n\ll N^{\frac{1}{6\kappa}}}(2C)^{\omega(n)}
\]
recalling that $\psi(n)\gg n^{\frac{12}{r}}$.
We use Rankin's trick to bound the first sum. Take $\epsilon_1=\frac{1}{\log Y}$, we have
\[
\sum_{n>Y}\frac{(2C)^{\omega(n)}}{n^{\xi}}\leq \sum_{n}\frac{(2C)^{\omega(n)}}{n^{\xi}}\left(\frac{n}{Y}\right)^{\xi-1-\epsilon_1}
\ll \frac{1}{Y^{\xi-1}}\sum_{n}\frac{(2C)^{\omega(n)}}{n^{1+\epsilon_1}}\ll \frac{(\log Y)^{2C}}{Y^{\xi-1}},
\]
where the last inequality follows from the fact that $\zeta(s)$ has a simple pole at $s=1$.
The second sum is 
\[
\sum_{n\ll N^{\frac{1}{6\kappa}}}(2C)^{\omega(n)}\ll N^{\frac{1}{6\kappa}}(\log N)^{2C}.
\]
This proves~\eqref{eq:sumlargen}.

Finally we evaluate~\eqref{eq:breakn} using~\eqref{eq:sumsmalln} and~\eqref{eq:sumlargen} with the choice $Y\coloneqq N^{\frac{1}{12m(\xi+m^{-1})}}$. The main term matches, and the error term is of the form
\[
\left(N^{\frac{1}{12m}\left(1+\frac{1+m^{-1}}{\xi+m^{-1}}\right)}+N^{\frac{1}{6\kappa}}\right)(\log N)^{C}
\]
for some constant $C>0$. The exponents in $N$ of the error term are strictly less than $\frac{1}{6m}$ since $\xi>1$ and $\kappa>m$. Therefore it is possible to pick 
\[
0<\epsilon<\frac{1}{6m}-\max\left\{\frac{1}{12m}\left(1+\frac{1+m^{-1}}{\xi+m^{-1}}\right),\frac{1}{6\kappa}\right\}
\]
independently of $\alpha$ and $q$. This completes the proof.
\end{proof}

\begin{proposition}\label{prop:ratio}
Suppose we are in the setting of Lemma~\ref{lemma:maincounting}. There exists $\epsilon>0$, depending only on $f$ and $g$, such that whenever $\left|\cA^1_{1}(N)\right|>0$, we have
\[ 
\frac{\left|\cA^1_{1}(N, \alpha,q)\right|}{\left|\cA^1_{1}(N)\right|}
 =\frac{1}{|\PP^1(\Z/q\Z)|}\prod_{p\mid q}\lambda(p)+O(N^{-\epsilon}),
\]
where $\lambda$ is defined in~\eqref{eq:deflocalfac}
and the implied constant depends only on $f,g$.
\end{proposition}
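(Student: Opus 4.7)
The plan is to apply Lemma~\ref{lemma:maincounting} twice and take the quotient. Applied with the given $\alpha\in\PP^1(\Z/q\Z)$ and $q$, it gives
\[
|\cA^1_{1}(N,\alpha,q)| = \frac{6}{\pi^2}\frac{N^{1/(6m)}V}{|\PP^1(\Z/q\Z)|}\prod_{p\nmid q}\lambda(p)^{-1} + O\!\left(N^{1/(6m)-\epsilon}\right).
\]
Applied instead with $q=1$ (where the coprimality hypothesis $\gcd(A(a_1,b_1)^3,B(a_1,b_1)^2,q)=1$ becomes vacuous, $|\PP^1(\Z/\Z)|=1$, and the empty product is $1$), the same lemma yields
\[
|\cA^1_{1}(N)| = \frac{6}{\pi^2}\,N^{1/(6m)}V\,\prod_{p}\lambda(p)^{-1} + O\!\left(N^{1/(6m)-\epsilon}\right).
\]

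Next I would invoke Lemma~\ref{lemma:profconvrho} to conclude that $\prod_p \lambda(p)^{-1}$ is a finite and strictly positive constant (depending only on $f,g$). Consequently the main term of $|\cA^1_{1}(N)|$ is of exact order $N^{1/(6m)}$; in particular there exists $N_0$ (depending only on $f,g$) such that $|\cA^1_{1}(N)|\gg N^{1/(6m)}$ for all $N\geq N_0$, which makes the division licit. Taking the quotient of the two asymptotics and using the identity
\[
\frac{\prod_{p\nmid q}\lambda(p)^{-1}}{\prod_{p}\lambda(p)^{-1}} \;=\; \prod_{p\mid q}\lambda(p),
\]
followed by the elementary estimate $\frac{A+O(E)}{B+O(E)} = \frac{A}{B}+O(E/B)$ with $B\asymp N^{1/(6m)}$ and $E\ll N^{1/(6m)-\epsilon}$, produces the main term $|\PP^1(\Z/q\Z)|^{-1}\prod_{p\mid q}\lambda(p)$ and error $O(N^{-\epsilon})$, possibly after shrinking $\epsilon>0$ very slightly.

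The finitely many $N<N_0$ for which $|\cA^1_{1}(N)|>0$ but the error does not yet dominate are handled by enlarging the implied constant: on this bounded range both the ratio $|\cA^1_{1}(N,\alpha,q)|/|\cA^1_{1}(N)|$ and the main term $|\PP^1(\Z/q\Z)|^{-1}\prod_{p\mid q}\lambda(p)$ lie in $[0,1]$, so the discrepancy is trivially $O(1)=O(N^{-\epsilon})$ after absorbing a constant factor. There is no real obstacle; the proposition is essentially a bookkeeping corollary of Lemma~\ref{lemma:maincounting}, and the only subtlety is the appeal to Lemma~\ref{lemma:profconvrho} to guarantee that the denominator Euler product is a non-zero constant so the ratio of products collapses cleanly to $\prod_{p\mid q}\lambda(p)$.
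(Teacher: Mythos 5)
Your proof is correct and follows essentially the same route as the paper: apply Lemma~\ref{lemma:maincounting} with the given $(\alpha,q)$ and with $q=1$, use the fact (from Lemma~\ref{lemma:profconvrho}, or equivalently that $\lambda(p)^{-1}\geq 1$ with $\prod_p\lambda(p)^{-1}$ convergent) that the denominator's main term has exact order $N^{1/(6m)}$, and divide. The only cosmetic difference is that you spell out the treatment of small $N$; the paper's wording implicitly absorbs this into the implied constants.
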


\begin{proof}
By Lemma~\ref{lemma:maincounting}, we have
\[
 \left|\cA^1_{1}(N, \alpha,q)\right|
 =\frac{6}{\pi^2}\frac{N^{\frac{1}{6m}}V}{|\PP^1(\Z/q\Z)|}\prod_{p\nmid q}\lambda(p)^{-1}
+O\left(N^{\frac{1}{6m}-\epsilon}\right).
\]
Putting in $q=1$, we have
\[
 \left|\cA^1_{1}(N)\right|
 =\frac{6}{\pi^2}N^{\frac{1}{6m}}V\prod_{p}\lambda(p)^{-1}
+O\left(N^{\frac{1}{6m}-\epsilon}\right).
\]
Since $\lambda(p)^{-1}\geq 1$ for all $p$, we can rewrite this as
\[
 \left|\cA^1_{1}(N)\right|
 =\left(1+O\left(N^{-\epsilon}\right)\right)N^{\frac{1}{6m}}V\frac{6}{\pi^2}\prod_{p}\lambda(p)^{-1}.
\]
Since $\left|\cA^1_{1}(N)\right|\neq 0$, we can take the ratio of the above expressions of $\left|\cA^1_{1}(N, \alpha,q)\right|$ and $\left|\cA^1_{1}(N)\right|$. This yields
\[\frac{\left|\cA^1_{1}(N, \alpha,q)\right|}{\left|\cA^1_{1}(N)\right|}
=\frac{(1+O\left(N^{-\epsilon}\right))\prod_{p\mid q}\lambda(p)}{|\PP^1(\Z/q\Z)|
}+O\left(N^{-\epsilon}\right)=\frac{\prod_{p\mid q}\lambda(p)}{|\PP^1(\Z/q\Z)|
}+O\left(N^{-\epsilon}\right)\]
as required.
\end{proof}

\section{The distribution of the Tamagawa ratio}\label{sec:central}
\subsection{A central limit theorem}
We use the method of moments to prove a central limit theorem suited for our application. We largely follow the method of moments approach in proving Erd\H{o}s--Kac theorem as given in Chapter~2 of~\cite{Kowalskibook}. Also see~\cite{GS} for some general statements. 

\begin{theorem}\label{theorem:EKver}
Let $n$ be a positive integer and let $C,C',\kappa\geq 1$.
Let $\cA$ be an infinite set with a given height function. Let $\cA(N)$ be the set containing all elements of $\cA$ up to height $N$.
 Let $\rf_p:\cA\rightarrow \{c\in\ZZ: |c|\leq C\}$ be a function.
 For every prime $p$, let $\rY_p$ be a random variable on a probability space with probability measure $\mathbf{P}$. Assume that $(\rY_p)_p$ are mutually independent and $\abs{\rY_p}\leq C$.
 Let $E(N)$ and $Q\coloneqq Q(N)$ be positive real-valued functions such that $\log Q=o(\log E(N))$.
 Assume that for every squarefree positive integer $q$ and every $\omega(q)$-tuple $(\lambda_p)_{p\mid q}$ of non-zero integers, 
 \begin{equation}\label{eq:probindest}
 \frac{\left|\left\{x\in \cA(N):
(\rf_p(x))_{p\mid q}=(\lambda_p)_{p\mid q} \right\}\right|}{|\cA(N)|}=
 \prod_{p\mid q}\mathbf{P}\left(\rY_p=\lambda_p\right)+O\left(\frac{q^\kappa}{E(N)}\right)
 \end{equation}
 holds uniformly for all $N\geq C'$.
 Let 
 \[
 \mu(Q)\coloneqq\sum_{p\leq Q} \mathbf{E}[\rY_p]\quad \text{ and }\quad \sigma(Q)^2\coloneqq \sum_{p\leq Q}\mathbf{E}[(\rY_p-\mathbf{E}[\rY_p])^2].
 \]
 Assume that $\sigma(Q)\rightarrow\infty$ as $N\rightarrow\infty$.
 Equip $\cA(N)$ with the uniform probability measure.
 Then the random variable $\rX_N$ on $\cA(N)$ defined by
 \[
 x\mapsto\frac{\sum_{p\leq Q}\rf_p(x)-\mu(Q)}{\sigma(Q)}
 \]
 converges in distribution to a standard Gaussian random variable as $N\rightarrow \infty$.
\end{theorem}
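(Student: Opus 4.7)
The plan is to apply the method of moments. Since the standard Gaussian is determined by its moments, it suffices to show that for each fixed integer $k\geq 1$,
\begin{equation*}
\mathbf{E}_{\cA(N)}[\rX_N^k]\longrightarrow \begin{cases}(k-1)!! & k \text{ even},\\ 0 & k \text{ odd},\end{cases}
\end{equation*}
as $N\to\infty$. Expanding multinomially,
\begin{equation*}
\sigma(Q)^k\,\mathbf{E}_{\cA(N)}[\rX_N^k]
= \sum_{p_1,\dots,p_k\leq Q}\mathbf{E}_{\cA(N)}\Big[\prod_{i=1}^k\big(\rf_{p_i}(x)-\mathbf{E}[\rY_{p_i}]\big)\Big],
\end{equation*}
and I would group tuples by their set of distinct primes $\{q_1,\dots,q_s\}$ with multiplicities $(m_1,\dots,m_s)$ summing to $k$.

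For each such tuple let $q=q_1\cdots q_s$. Since $|\rf_p|,|\rY_p|\leq C$, an inclusion–exclusion over the subsets where $\rf_{q_i}=0$ reduces the computation of the inner expectation to evaluating~\eqref{eq:probindest} on non-zero configurations, yielding
\begin{equation*}
\mathbf{E}_{\cA(N)}\Big[\prod_{i=1}^s\big(\rf_{q_i}(x)-\mathbf{E}[\rY_{q_i}]\big)^{m_i}\Big]
= \prod_{i=1}^s\mathbf{E}\big[(\rY_{q_i}-\mathbf{E}[\rY_{q_i}])^{m_i}\big] + O_{k,C}\!\Big(\frac{q^\kappa}{E(N)}\Big).
\end{equation*}
The crucial observation is $\mathbf{E}[\rY_p-\mathbf{E}[\rY_p]]=0$, so the main-term product vanishes whenever some $m_i=1$; only partitions with all $m_i\geq 2$ contribute non-trivially. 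A partition with some $m_i\geq 3$ has at most $s\leq\lfloor(k-1)/2\rfloor$ distinct parts, so gives rise to only $O(Q^{(k-1)/2})$ tuples with uniformly bounded inner expectation, whose total contribution is $o(\sigma(Q)^k)$ since $\sigma(Q)\to\infty$. For $k$ odd this exhausts all possibilities. For $k$ even, the dominant contribution comes from partitions with every $m_i=2$: there are $(k-1)!!$ ways to pair the $k$ slots, and for each pairing the sum of $\prod_{i=1}^{k/2}\mathbf{E}[(\rY_{q_i}-\mathbf{E}[\rY_{q_i}])^2]$ over ordered $(k/2)$-tuples of distinct primes $\leq Q$ equals $\sigma(Q)^k$ up to a diagonal correction of size $O_{k,C}(\sigma(Q)^{k-2})=o(\sigma(Q)^k)$.

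To close the argument I would bound the aggregate error introduced by~\eqref{eq:probindest}: each of the at most $\pi(Q)^k\leq Q^k$ tuples contributes $O_{k,C}(q^\kappa/E(N))$ with $q\leq Q^k$, yielding a total error of size $\ll_{k,C,\kappa}Q^{k(\kappa+1)}/(\sigma(Q)^k E(N))$. The hypothesis $\log Q=o(\log E(N))$ forces $Q^{k(\kappa+1)}=o(E(N))$ for every fixed $k$, and combined with $\sigma(Q)\to\infty$ this is $o(1)$. Collecting the estimates gives the desired moment convergence, and the method of moments concludes. The main technical point is the bookkeeping for diagonal coincidences in the pure-pairing case---where the boundedness of $\rY_p$ is essential to absorb them into lower-order terms---together with calibrating the polynomial blow-up $q^\kappa$ in~\eqref{eq:probindest} against the assumption $\log Q=o(\log E(N))$.
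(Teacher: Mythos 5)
Your proposal follows the method of moments directly: you expand the $k$-th moment of $\rX_N$, group $k$-tuples of primes by their partition into distinct primes with multiplicities, and carry out the classical pairing argument to land on the Gaussian moments $(k-1)!!$. The paper takes a shorter route: it uses~\eqref{eq:probindest} only to show that $\EE_N\bigl[\bigl(\sum_{p\leq Q}\rf_p-\mu(Q)\bigr)^k\bigr]$ agrees with $\mathbf{E}\bigl[\bigl(\sum_{p\leq Q}\rY_p-\mu(Q)\bigr)^k\bigr]$ up to $O_k(1)$, then applies the classical central limit theorem to the independent $\rY_p$ and the converse method of moments to identify the limit of those moments, before finishing with the method of moments. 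In other words, the paper transfers to the independent model and outsources the combinatorics to the CLT, whereas you re-derive it. Both are legitimate, and yours is arguably more self-contained.

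There is, however, a genuine gap in your handling of partitions with some $m_i\geq 3$. You argue that such partitions give rise to $O(Q^{(k-1)/2})$ tuples with uniformly bounded inner expectation, and that their total contribution is therefore $o(\sigma(Q)^k)$ ``since $\sigma(Q)\to\infty$''. That implication is false: $\sigma(Q)\to\infty$ says nothing about how $\sigma(Q)^k$ compares with a fixed power of $Q$. In the relevant applications one has $\sigma(Q)^2\asymp\log\log Q$, so a crude bound of $O(Q^{(k-1)/2})$ utterly dominates the main term $\sigma(Q)^k$. The bound must instead be measured against $\sigma(Q)$ itself. Since $|\rY_p-\mathbf{E}[\rY_p]|\leq 2C$, for every $m\geq 2$ one has $\bigl|\mathbf{E}[(\rY_p-\mathbf{E}[\rY_p])^m]\bigr|\leq (2C)^{m-2}\,\mathbf{E}[(\rY_p-\mathbf{E}[\rY_p])^2]$, whence
\[
\sum_{\substack{q_1,\dots,q_s\leq Q\\ \text{distinct}}}\prod_{i=1}^s\bigl|\mathbf{E}[(\rY_{q_i}-\mathbf{E}[\rY_{q_i}])^{m_i}]\bigr|
\leq (2C)^{k-2s}\,\sigma(Q)^{2s}.
\]
When some $m_i\geq 3$ (and all $m_i\geq 2$) one has $2s\leq k-1$, so this is $o(\sigma(Q)^k)$. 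This is exactly the device you already invoke to absorb the diagonal corrections in the pure-pairing case, so the repair is local; but as written your justification for the $m_i\geq 3$ contributions does not hold.
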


\begin{proof}
Fix a non-negative integer $k$. 
Write $\mu_p\coloneqq\mathbf{E}[\rY_p]$. Let $\PP_N$ denote the uniform measure on $\cA(N)$ and $\EE_N$ denote the corresponding expectation.
 By the linearity of expectation, we have
 \begin{equation}\label{eq:firstexp}
 \EE_N\left[\left(\sum_{p\leq Q}\rf_p-\mu(Q)\right)^k\right]=\sum_{p_1\leq Q}\dots \sum_{p_k\leq Q}\EE_N[(\rf_{p_1}-\mu_{p_1})\dots(\rf_{p_k}-\mu_{p_k})]
 .
 \end{equation}
 The assumption~\eqref{eq:probindest} with $q=\lcm(p_1,\dots,p_k)$ implies that
\begin{align*}
 \EE_N\left[\prod_{i}^k\rf_{p_i}\right]&=
\sum_{\lambda\neq 0}\lambda\cdot \PP_N\left(\prod_{i}^k\rf_{p_i}=\lambda\right)
\\&=
\sum_{\prod_{p\mid q}\lambda_{p}\neq 0}\prod_{i=1}^k\lambda_{p_i} \prod_{p\mid q}\mathbf{P}\left(\rY_{p}=\lambda_p\right)+O\left(\frac{(2C)^{\omega(q)}q}{E(N)}\right).
\end{align*}
Then
 \[\EE_N\left[\prod_{i}^k\rf_{p_i}\right]=
\mathbf{E}\left[\prod_{i}^k\rY_{p_i}\right]+O\left(\frac{(2C)^{\omega(q)}q^{\kappa}}{E(N)}\right).\]
 Applying this with $q\mid\lcm(p_1,\dots,p_k)$ allows us to rewrite~\eqref{eq:firstexp} as
 \[ \EE_N\left[\left(\sum_{p\leq Q}\rf_p-\mu(Q)\right)^k\right]
 =\sum_{p_1\leq Q}\dots \sum_{p_k\leq Q}\mathbf{E}[(\rY_{p_1}-\mu_{p_1})\dots(\rY_{p_k}-\mu_{p_k})]+O\left(\frac{(4CQ)^{k+\kappa}}{E(N)}\right)
 .
 \]
 Again by linearity and the assumption $\log Q=o(\log E(N))$,
 \[
 =\mathbf{E}\left[\left(\sum_{p\leq Q}(\rY_{p}-\mu_{p})\right)^k\right]+O\left(\frac{(4CQ)^{k+1}}{E(N)}\right)
 =\mathbf{E}\left[\left(\sum_{p\leq Q}(\rY_{p}-\mu_{p})\right)^k\right]+O_k(1)
 .
 \]
Apply the central limit theorem~\cite[Theorem~B.7.2]{Kowalskibook}
to the random variables $\rY_p$, so $\rT_N\coloneqq (\sum_{p\leq Q} \rY_p-\mu(Q))/\sigma(Q)$ converges in distribution to a standard Gaussian random variable as $N\rightarrow \infty$.
Then the converse of method of moments~\cite[Theorem~B.5.6]{Kowalskibook}
 implies that the moments of $T_N$ tend to the moments of a standard Gaussian random variable.
Since $\EE_N[\rX_N^k]=\mathbf{E}[\rT_N^k]+O_k(\sigma(Q)^{-k})$ and $\sigma(Q)\rightarrow\infty$, $\EE_N[\rX_N^k]$ also converges to the moments of a standard Gaussian random variable.
The method of moments~\cite[Theorem~B.5.5]{Kowalskibook} implies that $\rX_N$ converges in distribution to the standard Gaussian random variable.
\end{proof}
\subsection{Bounding the average Tamagawa ratio}
\begin{theorem}\label{theorem:NT}
Let $\alpha> 1$.
 Let $\cA$ be an infinite set with a given height function. Let $\cA(N)$ contain all elements of $\cA$ up to height $N$ and assume that $\log |\cA(N)|\asymp \log N$.
 For each $p$, take a function $\rf_p:\cA\rightarrow \{-1,0,1\}$. Suppose that there exists $\xi>0$ and multiplicative functions $\eta_+$ and $\eta_-$ taking values in $[0,1]$ such that for every pair of coprime positive squarefree integers $q_-,q_+$ with $q_-q_+\leq N^{\xi/2}$, we have
 \begin{equation}\label{eq:leveldist}
 \frac{1}{|\cA(N)|}\left|\left\{x\in \cA(N):
\begin{array}{l}
 p\mid q_-\Rightarrow \rf_p(x)=-1\\
 p\mid q_+\Rightarrow \rf_p(x)=1\\
\end{array}\right\}\right|=
 \eta_-(q_-) \eta_+(q_+)+O\left(\frac{1}{N^{\xi}}\right).
 \end{equation}
Assume further that there exist constants $c_+,c_-,C >0$, and $M_-,M_+$ such that
\begin{gather}\label{eq:assumeNT0}
\eta_{\pm}(p)\ll \frac{1}{p},\\\label{eq:assumeNT1}
\sum_{p\leq N}\eta_\pm(p)=c_{\pm}\log\log N+M_{\pm}+O\left(\frac{1}{\log N}\right),\\
\max_{x\in\cA(N)}\prod_{\rf_p(x)\neq 0}p\ll N^{C} \label{eq:assumeNT2}
\end{gather}
hold uniformly for all $N\geq 2$.
 Then
\[|\cA(N)|(\log N)^{(\alpha -1)c_+-(1-\alpha^{-1})c_-}\ll \sum_{x\in\cA(N)}\prod_p\alpha^{\rf_p(x)}
\ll |\cA(N)|(\log N)^{(\alpha -1)c_+},\]
where the implied constants depends only on $\xi, c_\pm, M_\pm, C, \alpha$ and the implied constants in the assumptions.
\end{theorem}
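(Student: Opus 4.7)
My plan is to prove both bounds from the multiplicative expansion
\[
\prod_p \alpha^{\rf_p(x)} = \sum_{\substack{q_+, q_-\\ \text{cop.\ sqf.}}}(\alpha-1)^{\omega(q_+)}(\alpha^{-1}-1)^{\omega(q_-)}\mathbf{1}\bigl[\rf_p(x)=1\ \forall p\mid q_+,\ \rf_p(x)=-1\ \forall p\mid q_-\bigr],
\]
summing over $x \in \cA(N)$, and applying~\eqref{eq:leveldist} on the range $q_+q_- \leq N^{\xi/2}$ together with Rankin-type tail bounds exploiting~\eqref{eq:assumeNT2}.

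For the upper bound, I use the factorwise inequality $\alpha^{\rf_p(x)} \leq 1 + (\alpha-1)\mathbf{1}[\rf_p(x)=1]$ (valid since $\alpha^{-1} \leq 1$) to reduce to bounding $\sum_x \alpha^{n_+(x)}$, where $n_+(x) = \#\{p : \rf_p(x) = 1\}$. Expanding and applying~\eqref{eq:leveldist} with $q_- = 1$ on $q \leq N^{\xi/2}$, the main term is controlled by
\[
\prod_{p \leq N^{\xi/2}}(1+(\alpha-1)\eta_+(p)) \leq \exp\Bigl((\alpha-1)\sum_{p \leq N^{\xi/2}}\eta_+(p)\Bigr) \ll (\log N)^{(\alpha-1)c_+}
\]
by~\eqref{eq:assumeNT0} and~\eqref{eq:assumeNT1}. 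The tail from $q > N^{\xi/2}$ is handled via Rankin's trick together with the observation that any such $q$ divides $\prod_{\rf_p(x) \neq 0}p \ll N^C$ by~\eqref{eq:assumeNT2}.

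For the lower bound, I keep the full signed expansion and, after applying~\eqref{eq:leveldist} on $q_+q_- \leq N^{\xi/2}$, obtain a main term equal to $|\cA(N)|$ times a restricted sum of a multiplicative function; up to tail contributions, this matches the truncated Euler product
\[
\prod_{p \leq N^{\xi/2}}\bigl(1+(\alpha-1)\eta_+(p)+(\alpha^{-1}-1)\eta_-(p)\bigr).
\]
By~\eqref{eq:assumeNT0} each local factor is $1 + O(1/p) > 0$ for large $p$, so~\eqref{eq:assumeNT1} gives the expected order of magnitude $(\log N)^{(\alpha-1)c_+ - (1-\alpha^{-1})c_-}$; I show this survives after subtracting the error (both from~\eqref{eq:leveldist} and from $q_+q_- > N^{\xi/2}$ with prime factors $\leq N^{\xi/2}$) via Rankin's trick. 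The sign cancellation arising from $\alpha^{-1}-1 < 0$ is absorbed into the genuinely positive Euler product.

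The main obstacle is the tail bound. Naive Rankin with parameter $\delta > 0$ produces a factor $N^{(C-\xi/2)\delta}$, which is negligible only when $C < \xi/2$. When the exponent $C$ from~\eqref{eq:assumeNT2} exceeds $\xi/2$, I instead decompose every squarefree $q > N^{\xi/2}$ as $q = q_1 q_2$ with $q_1 \leq N^{\xi/4}$ the product of its smallest prime factors and $q_2$ supported on primes $> N^{\xi/4}$; by~\eqref{eq:assumeNT2} we have $\omega(q_2) \leq 4C/\xi$ uniformly, while~\eqref{eq:leveldist} applies to $q_1$, so the tail is absorbed into a bounded correction to the main term.
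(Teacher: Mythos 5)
Your upper bound strategy is in the same spirit as the paper's (both begin from $\prod_p\alpha^{\rf_p(x)}\leq\alpha^{\omega(q_+(x))}$), and the $q_1q_2$ decomposition of rough and smooth parts is plausible, although you would need to iterate carefully on the smooth part $q_1$, since $q_1>N^{\xi/4}$ is still possible and its prime factor count is not uniformly bounded; the paper instead cites~\cite[Theorem~1.9]{MR4870247} and does not re-derive this.

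The serious issue is your lower bound. Your plan is to expand
$\prod_p\alpha^{\rf_p(x)}$ via inclusion--exclusion in $(q_+,q_-)$, apply~\eqref{eq:leveldist} on the range $q_+q_-\leq N^{\xi/2}$, and then show that the signed tail is negligible by Rankin's trick. This cannot work as stated, because the signed tail does not have the required decay. Concretely, the main term you obtain is the truncated Euler product
$\prod_{p\leq N^{\xi/2}}\bigl(1+(\alpha-1)\eta_+(p)-(1-\alpha^{-1})\eta_-(p)\bigr)\asymp(\log N)^{(\alpha-1)c_+-(1-\alpha^{-1})c_-}$,
but the best control you have on the tail $q_+q_->N^{\xi/2}$ is on its \emph{absolute value}, which is governed by $\prod_p\bigl(1+(\alpha-1)\eta_+(p)+(1-\alpha^{-1})\eta_-(p)\bigr)\asymp(\log N)^{(\alpha-1)c_++(1-\alpha^{-1})c_-}$. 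When $c_->0$ this absolute tail exceeds the signed main term by a factor $(\log N)^{2(1-\alpha^{-1})c_-}$, and Rankin's trick with $\delta\asymp1/\log N$ only saves a bounded factor, not a power of $\log N$. So the error you are subtracting is larger than the quantity you want to isolate; ``absorbing the sign cancellation into the genuinely positive Euler product'' does not repair this, because the Euler product's positivity says nothing about how much mass sits in the tail.

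The paper's proof avoids this problem structurally: it never expands the $q_-$ side. Instead it fixes $z=N^\nu$, observes via~\eqref{eq:assumeNT2} that $\prod_p\alpha^{\rf_p(x)}\gg\prod_{p\leq z}\alpha^{\rf_p(x)}$, conditions on the $z$-smooth parts $d_\pm$ of $q_\pm(x)$ (so that $\prod_{p\leq z}\alpha^{\rf_p(x)}=\alpha^{\omega(d_+)-\omega(d_-)}$ exactly), and then uses a \emph{lower-bound sieve} (the fundamental lemma of sieve theory,~\cite[Lemma~6.3 and (6.40)]{IKbook}, with the beta-sieve weights $\lambda_m^-$) to lower bound the count of $x$ with specified smooth parts. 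The sieve is crucial precisely because it yields a genuine one-sided inequality with no uncontrolled signed tail; replacing it with naive inclusion--exclusion plus Rankin, as you propose, loses the game. You would need to bring in a sieve-theoretic lower bound, or some other device that produces positivity, to rescue your lower bound.
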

\begin{proof}
For any $x\in\cA(N)$, write 
 \[q_+(x)\coloneqq\prod_{\rf_p(x)=1}p\quad\text{ and }\quad q_-(x)\coloneqq\prod_{\rf_p(x)=-1}p.\]
As for the upper bound, notice that $\sum_p\rf_p(x)=\omega(q_+(x))-\omega(q_-(x))\leq \omega(q_+(x))$, so trivially we have
\[\sum_{x\in\cA(N)}\prod_p\alpha^{\rf_p(x)}
\leq
\sum_{x\in\cA(N)}\alpha^{\omega(q_+(x))}.
\]
Our assumptions allow us to apply~\cite[Theorem~1.9]{MR4870247} with $\chi_N(x)=\mathbf{1}_{x\in\cA(N)}$, $c_x=q_+(x)$,

$f(n)=\alpha^{\omega(n)}$, $M(N)=|\cA(N)|$, and $h_N(q_+)=\eta_+(q_+)$ to get
\[\sum_{x\in\cA(N)}\prod_p\alpha^{\rf_p(x)}
\leq \sum_{x\in\cA(N)}\alpha^{\omega(q_+(x))}
\ll |\cA(N)|(\log N)^{\alpha c_+-c_+},\]
which gives the required upper bound.

 To prove the lower bound, we carry out a modification of~\cite[Section 4]{MR4870247}.
 Given any integer $n$, denote the largest prime factor of $n$ by $P^+(n)$, and denote the smallest prime factor of $n$ by $P^-(n)$.
 We will split up the sum over $x\in\cA(N)$ in terms of the smooth part and the rough part of $q_-(x)$ and $q_+(x)$.
 Consider
 \begin{equation}\label{eq:NT1}
 \sum_{x\in\cA(N)}\prod_p\alpha^{\rf_p(x)}
 =\sum_{\substack{d_+\\P^+(d_+)\leq z}}\sum_{\substack{d_-\\P^+(d_-)\leq z}}\mu(d_+d_-)^2\sum_{\substack{x\in\cA(N)\\d_{\pm}\mid q_{\pm}(x)\\ P^-(q_{\pm}(x)/d_\pm)>z}}\prod_p\alpha^{\rf_p(x)},
 \end{equation}
 where $z\coloneqq N^{\nu}$ and $0<\nu<\xi$. By~\eqref{eq:assumeNT2}, the number of $p>z$ such that $f_p(x)\neq 0$ is at most $C/\nu +O(1)$, so $\sum_{p>z}|\rf_p(x)|\ll 1$.
 This leads to the lower bound
 \[\prod_p\alpha^{\rf_p(x)}\gg \prod_{p\leq z}\alpha^{\rf_p(x)}.\]
 Since we are aiming for a lower bound to~\eqref{eq:NT1}, we are allowed to restrict the sum to $d_+d_-\leq N^{\epsilon}$ for some $\epsilon>0$, so we have
\begin{equation}\label{eq:NT2}
 \sum_{x\in\cA(N)}\prod_p\alpha^{\rf_p(x)} \gg \sum_{\substack{(d_+,d_-)\\ d_+d_-\leq N^{\epsilon}\\P^+(d_+d_-)\leq z}}\mu(d_+d_-)^2\alpha^{\omega(d_+)-\omega(d_-)}\sum_{\substack{x\in\cA(N)\\d_{\pm}\mid q_{\pm}(x)\\ P^-(q_{\pm}(x)/d_\pm)>z}}1.
\end{equation}

To lower bound the inner sum, we invoke the fundamental lemma of sieve theory. Take the sequence $\lambda_m^-$ from~\cite[Lemma 6.3]{IKbook} with $y\coloneqq N^{\theta}$ and $\nu<\theta<\xi/2$. The property~\cite[(6.46)]{IKbook} allows us to write
\begin{equation}\label{eq:sieveinq}
 \sum_{\substack{x\in\cA(N)\\d_{\pm}\mid q_{\pm}(x)\\ P^-(q_{\pm}(x)/d_\pm)>z}}1
\gg \sum_{\substack{m\\P^+(m)\leq z}}\mu(m)^2\lambda_m^-\sum_{\substack{x\in\cA(N)\\d_{\pm}\mid q_{\pm}(x)\\md_-d_+\mid q_-(x)q_+(x)}}1.
\end{equation}
Imposing $\epsilon<\xi/2$, we make use of~\eqref{eq:leveldist} to evaluate the inner sum as
\[\sum_{\substack{x\in\cA(N)\\d_{\pm}\mid q_{\pm}(x)\\md_-d_+\mid q_-(x)q_+(x)}}1=|\cA(N)|\left(\prod_{p\mid d_+}\eta_+(p)\prod_{p\mid d_-}\eta_-(p)\prod_{p\mid m}(\eta_-(p)+\eta_+(p))+O\left(\frac{2^{\omega(m)}}{N^{\xi}}\right)\right).\]
Since $\lambda_m^{-}$ is supported on integers $m\leq y=N^{\theta}$ and $|\lambda_m^{-}|\leq 1$,  we see that the error term contributes $O(N^{-(\xi-2\theta)}|\cA(N)|)$ in~\eqref{eq:sieveinq} by bounding $2^{\omega(m)}$ trivially by $N^{\theta}$.
The assumption~\eqref{eq:assumeNT1} implies that there exists some constant $C_0$ such that
\[\prod_{p\leq z}\left(1-(\eta_-(p)+\eta_+(p))\right)=C_0(\log z)^{-(c_++c_-)}\left(1+O\left((\log z)^{-1}\right)\right)\]
holds for all $z$,
which satisfies~\cite[(6.47)]{IKbook} by picking $K$ to be larger than the implied constant.
Then using~\cite[(6.40)]{IKbook}, which is a more precise version of~\cite[(6.48)]{IKbook}, shows that
\begin{equation*}
 \sum_{\substack{m\\P^+(m)\leq z}}\lambda_m^-\mu(m)^2\prod_{p\mid m}(\eta_-(p)+\eta_+(p))
\geq 
 \left(1-e^{\beta-\frac{\theta}{\nu}}K^{10}\right)\prod_{p\leq z}\left(1-(\eta_-(p)+\eta_+(p))\right),
\end{equation*}
where $\beta=9(c_-+c_+)+1$.
Picking $\nu$ sufficiently small ensures that $1-e^{\beta-\frac{\theta}{\nu}}K^{10}\geq \frac{1}{2}$.
This allows us to lower bound~\eqref{eq:NT2} by
 \begin{multline}\label{eq:NT6}
 \gg |\cA(N)|\prod_{p\leq z}\left(1-(\eta_-(p)+\eta_+(p))\right)\sum_{\substack{(d_+,d_-)\\ d_+d_-\leq N^{\epsilon}\\P^+(d_+d_-)\leq z}}\mu(d_+d_-)^2\prod_{p\mid d_+}\alpha\eta_+(p)\prod_{p\mid d_-}\alpha^{-1}\eta_-(p)\\
 +O\left(\frac{|\cA(N)|}{N^{\xi-2\theta}}\sum_{\substack{(d_+,d_-)\\ d_+d_-\leq N^{\epsilon}}}\mu(d_+d_-)^2\alpha^{\omega(d_+)-\omega(d_-)}\right).
 \end{multline}
Taking $\epsilon<\xi-2\theta$ ensures that the error term is $o(|\cA(N)|)$.
As for the main term, we wish to remove the summation condition $d_+d_-\leq N^{\epsilon}$. 
Set $\delta(z)\coloneqq 1/\log z$ and assume that $N$ is sufficiently large such that $\delta(z)>0$.
We have
 \begin{align}
 &\sum_{\substack{(d_+,d_-)\\ d_+d_-> N^{\epsilon}\\P^+(d_+d_-)\leq z}}\mu(d_+d_-)^2\prod_{p\mid d_+}\alpha\eta_+(p)\prod_{p\mid d_-}\alpha^{-1}\eta_-(p)\notag\\
 &\leq
 \sum_{\substack{(d_+,d_-)\\P^+(d_+d_-)\leq z}}\mu(d_+d_-)^2\left(\frac{d_+d_-}{N^{\epsilon}}\right)^{\delta(z)}\prod_{p\mid d_+}\alpha\eta_+(p)\prod_{p\mid d_-}\alpha^{-1}\eta_-(p)\notag\\
 &\leq\frac{1}{N^{\epsilon\delta(z)}}
\exp\left(\sum_{p\leq z}p^{\delta(z)}\left(\alpha\eta_+(p)+\alpha^{-1}\eta_-(p)\right)\right).\label{eq:NT5}
 \end{align}
Since $e^t-1\leq 2t$ holds for all $0\leq t\leq 5/4$, we have
 $p^{\delta(z)}-1\leq 2\delta(z)\log p$ as long as $\delta(z)\log p\leq 5/4$.
 Therefore applying~\eqref{eq:assumeNT1}, we have
 \[\sum_{p\leq z}(p^{\delta(z)}-1)\left(\alpha\eta_+(p)+\alpha^{-1}\eta_-(p)\right)\leq 2\delta(z)\sum_{p\leq z}\log p\left(\alpha\eta_+(p)+\alpha^{-1}\eta_-(p)\right)\leq C_1 ,\]
 for some constant $C_1>0$ depending on $c_\pm$ and $\alpha$.
 This allows us to bound~\eqref{eq:NT5} by
\[
 \leq
 e^{C_1-\epsilon/\nu}
\exp\left(\sum_{p\leq z}\left(\alpha\eta_+(p)+\alpha^{-1}\eta_-(p)\right)\right).
\]
Using~\eqref{eq:assumeNT0}, we can find a constant $C_2>0$ such that
\[\prod_{p\leq z}\left(1+\alpha\eta_+(p)+\alpha^{-1}\eta_-(p)\right)
\geq C_2\exp\left(\sum_{p\leq z}\left(\alpha\eta_+(p)+\alpha^{-1}\eta_-(p)\right)\right).
\]
Choose $\nu$ small enough such that $C_3\coloneqq C_2-e^{C_1-\epsilon/\nu}>0$, so
 \[\sum_{\substack{(d_+,d_-)\\ d_+d_-\leq N^{\epsilon}\\P^+(d_+d_-)\leq z}}\mu(d_+d_-)^2\prod_{p\mid d_+}\alpha\eta_+(p)\prod_{p\mid d_-}\alpha^{-1}\eta_-(p)\geq C_3\exp\left(\sum_{p\leq z}\left(\alpha\eta_+(p)+\alpha^{-1}\eta_-(p)\right)\right).\]
Returning to~\eqref{eq:NT6}, and again using~\eqref{eq:assumeNT0}, we obtain
 \begin{equation*}
 \begin{aligned}
 &\sum_{x\in\cA(N)}\prod_p\alpha^{\rf_p(x)}\\
 &\gg|\cA(N)|\prod_{p\leq z}\left(1-(\eta_-(p)+\eta_+(p))\right)\cdot \exp\left(\sum_{p\leq z}\left(\alpha\eta_+(p)+\alpha^{-1}\eta_-(p)\right)\right)+o(|\cA(N)|)\\
 &\gg|\cA(N)|\exp\left(\sum_{p\leq z}\left(\alpha\eta_+(p)+\alpha^{-1}\eta_-(p)-(\eta_-(p)+\eta_+(p))\right)\right)+o(|\cA(N)|). 
 \end{aligned}
 \end{equation*}
 Plugging in~\eqref{eq:assumeNT1} yields the result.
\end{proof}

We note that an upper bound of the same order of magnitude as the lower bound can be obtained by an argument similar to that in~\cite[Section 3]{MR4870247}. However, we omit this here, as we are mainly interested in the distribution of the $\ell$-Selmer group, for which the Tamagawa ratio only yields a lower bound on its size.

\subsection{Tail bounds}
\begin{theorem}\label{theorem:tailbound}
 Keep the assumptions of Theorem~\ref{theorem:NT}. Then for any $A>0$ there exist $\delta(A)>0$ such that
\[\left|\left\{x\in\cA(N):\sum_{p}\rf_p(x)\geq A\log\log N\right\}\right|\gg_A|\cA(N)|(\log N)^{-\delta(A)}\]
holds for all $N\geq 2$.
\end{theorem}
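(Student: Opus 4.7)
The plan is to apply the Paley--Zygmund inequality to the non-negative random variable $Z(x) \coloneqq \alpha^{\sum_p \rf_p(x)}$ on $\cA(N)$ equipped with the uniform measure $\PP_N$, for a parameter $\alpha > 1$ chosen depending on $A$. The crucial observation is that the event $\{\sum_p \rf_p(x) \geq A\log\log N\}$ coincides with $\{Z(x) \geq (\log N)^{A\log\alpha}\}$, and Theorem~\ref{theorem:NT}, applied once with $\alpha$ and once with $\alpha^2$, simultaneously controls $\EE_N[Z]$ from below and $\EE_N[Z^2]$ from above.

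First I would fix $\alpha = \alpha(A) > 1$ large enough that
\[
(\alpha-1)c_+ - (1-\alpha^{-1})c_- \;>\; A\log\alpha.
\]
This is possible because $c_+ > 0$, so the left-hand side grows linearly in $\alpha$ while the right grows only logarithmically. Writing $\gamma \coloneqq (\alpha-1)c_+ - (1-\alpha^{-1})c_-$, Theorem~\ref{theorem:NT} gives $\EE_N[Z] \gg (\log N)^\gamma$ and, applied to $\alpha^2$, $\EE_N[Z^2] \ll (\log N)^{(\alpha^2-1)c_+}$.

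Next, setting $T \coloneqq (\log N)^{A\log\alpha}$ and $\theta \coloneqq T/\EE_N[Z]$, the strict inequality above guarantees $\theta \to 0$ as $N \to \infty$, hence $\theta \leq \tfrac{1}{2}$ for all $N \geq N_0(A)$. The Paley--Zygmund inequality then yields
\[
\frac{\left|\left\{x \in \cA(N):\, Z(x) \geq T\right\}\right|}{|\cA(N)|} \;\geq\; (1-\theta)^2 \frac{\EE_N[Z]^2}{\EE_N[Z^2]} \;\gg\; (\log N)^{2\gamma - (\alpha^2-1)c_+}.
\]
Using the identity $2(\alpha-1) - (\alpha^2-1) = -(\alpha-1)^2$, the exponent simplifies to $-(\alpha-1)^2 c_+ - 2(1-\alpha^{-1})c_-$, and I would set $\delta(A) \coloneqq (\alpha-1)^2 c_+ + 2(1-\alpha^{-1})c_-$. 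The finitely many $N < N_0(A)$ are absorbed into the implied constant, since $(\log N)^{-\delta(A)}$ is bounded above and below on any compact range.

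I do not expect a serious obstacle: the argument is a standard second-moment tail bound, and the choice of $\alpha$ decouples cleanly from the verification that Theorem~\ref{theorem:NT} applies (its hypotheses involve only $\rf_p$ and $\eta_\pm$, not $\alpha$). The only mildly delicate point is that $\delta(A)$ grows roughly quadratically in the chosen $\alpha$ and hence grows with $A$; but this is consistent with the Gaussian-type scale $\sqrt{\log\log N}$ already visible in the central limit theorem for $\sum_p \rf_p(x)$.
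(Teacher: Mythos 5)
Your argument is correct and is, at its core, the same second-moment method as the paper's proof: bound the first moment of $Z=\alpha^{\sum_p\rf_p}$ from below and the second moment from above using Theorem~\ref{theorem:NT} (applied to $\alpha$ and $\alpha^2$), then deduce the tail bound by Cauchy--Schwarz. Invoking Paley--Zygmund is just a clean packaging of the paper's inline Cauchy--Schwarz step, and the exponent you obtain, $\delta(A)=(\alpha-1)^2c_++2(1-\alpha^{-1})c_-$, is exactly the paper's.

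The one substantive difference is the choice of $\alpha$, and here your version is the more careful one. The paper fixes $\alpha=1+(A+c_-)/c_+$ and introduces the auxiliary threshold $\kappa=A\log\log N/\log\alpha$, so that $\alpha^\kappa=(\log N)^A$ and the verification $(\alpha-1)c_+-(1-\alpha^{-1})c_->A$ is precisely what is needed to get $\sum_x\alpha^{\sum_p\rf_p(x)}\geq 2|\cA(N)|\alpha^\kappa$. But the Cauchy--Schwarz step then controls $|\{x:\sum_p\rf_p(x)\geq\kappa\}|$, and the final line silently replaces $\kappa$ by $A\log\log N$; these two thresholds coincide only when $\log\alpha=1$, and $\kappa<A\log\log N$ once $\alpha>e$, which happens for $A$ large. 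Your choice of threshold $T=\alpha^{A\log\log N}=(\log N)^{A\log\alpha}$ matches the event $\{\sum_p\rf_p(x)\geq A\log\log N\}$ exactly, and your condition $(\alpha-1)c_+-(1-\alpha^{-1})c_->A\log\alpha$ — achievable since the left side is linear in $\alpha$ while the right is logarithmic, using $c_+>0$ — is precisely what makes $\theta=T/\EE_N[Z]\to 0$ and the Paley--Zygmund bound applicable. In short, you follow the identical strategy but repair a small internal inconsistency in the paper's choice of threshold.
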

\begin{proof}
Take $\alpha=1+(A+c_-)/c_+$. By Theorem~\ref{theorem:NT}, we have the lower bound \[\sum_{x\in\cA(N)}\prod_p\alpha^{\rf_p(x)}\geq C|\cA(N)|(\log N)^{(\alpha-1)c_+-(1-\alpha^{-1})c_-}\] for some constant $C>0$. Clearly $(\alpha-1)c_+-(1-\alpha^{-1})c_->(\alpha-1)c_+-c_-=A$, so
\[((\alpha-1)c_+-(1-\alpha^{-1})c_-)\log\log N+\log C\geq A\log\log N+\log 2\]
holds for all sufficiently large $N$.
Setting $\kappa\coloneqq A\log\log N/\log \alpha $,  we deduce that \[\sum_{x\in\cA(N)}\prod_p\alpha^{\rf_p(x)}\geq 2|\cA(N)|\alpha^{\kappa}\] for all sufficiently large $N$,
so 
\[\frac{1}{2}\sum_{x\in\cA(N)}\prod_p\alpha^{\rf_p(x)}\leq \sum_{x\in\cA(N)}\left(\prod_p\alpha^{\rf_p(x)}-\alpha^\kappa\right) \leq \sum_{x\in\cA(N)}\prod_p\alpha^{\rf_p(x)}\mathbf{1}_{\sum_{p}\rf_p(x)\geq \kappa}.\]
By Cauchy--Schwarz inequality, we have 
\[\sum_{x\in\cA(N)}\prod_p\alpha^{\rf_p(x)}\mathbf{1}_{\sum_{p}\rf_p(x)\geq \kappa}
 \leq 
\left(\sum_{x\in\cA(N)}\prod_p\alpha^{2\rf_p(x)}\right)^{\frac{1}{2}}
\left( \sum_{x\in\cA(N)}\mathbf{1}_{\sum_{p}\rf_p(x)\geq \kappa}\right)^{\frac{1}{2}}.
\]
Rearranging, we deduce that
\[ \left|\left\{x\in\cA(N):\sum_{p}\rf_p(x)\geq \kappa\right\}\right|=\sum_{x\in\cA(N)}\mathbf{1}_{\sum_{p}\rf_p(x)\geq \kappa}\geq 
 \frac{\left( \frac{1}{2}\sum_{x\in\cA(N)}\prod_p\alpha^{\rf_p(x)}\right)^{2}}{
\sum_{x\in\cA(N)}\prod_p\alpha^{2\rf_p(x)}}
.
\]
Using the upper bound in Theorem~\ref{theorem:NT} to bound the denominator and the lower bound in Theorem~\ref{theorem:NT} to bound the numerator, we have
\[\left|\left\{x\in\cA(N):\sum_{p}\rf_p(x)\geq A\log\log N\right\}\right|\gg_A|\cA(N)|(\log N)^{-\delta(A)},\]
where 
$\delta(A)
=(\alpha^2-1)c_+-2((\alpha-1)c_+-(1-\alpha^{-1})c_-)
=(\alpha-1)^2c_++2(1-\alpha^{-1})c_->0$.
\end{proof}

\subsection{The number of congruence classes}
In this section, we deduce some consequences of Chebotarev density theorem on the polynomials that determine the Tamagawa ratio of the elliptic curves in our families. Recall that $g\in\Z[t]$ and $B(x,y)=y^{3\varsigma}g(x/y^{\tau})$.

\begin{lemma}\label{lemma:equiprimes}
Suppose that $h\in\Z[t]$ has no repeated roots over $\overline{\Q}$ and is coprime with $g$ in $\Q[t]$.
Let $c$ be the number of irreducible factors of $h$ over $\Q$.
Let $h_1,\dots,h_c$ be the irreducible factors of $h$ over $\Q$ and let $L_i$ be the splitting field of $h_i$. Take $\alpha_i\in L_i$ to be a zero of $h_i$. Define 
\[
\theta_i\coloneqq\begin{cases}
 \hfil \frac{1}{2}&\text{if } 6g(\alpha_i)\notin L_i^2,\\ 
 \hfil 1&\text{if } 6g(\alpha_i)\in L_i^2,
\end{cases}\qquad \text{ and }\qquad \theta\coloneqq\theta_1+\cdots+\theta_c.
\]
Then there exists some constant $B_0$ and $B_1$ depending only on $g$ and $h$ such that
\[
\sum_{p\leq N}\frac{1}{p}\left|\left\{t\in\F_p: h(t)= 0\right\}\right|=c\log\log N+B_0 + O\left(\frac{1}{\log N}\right),
\]
and
\[
 \sum_{p\leq N}\frac{1}{p}\left|\left\{t\in\F_p: h(t)= 0,\ \leg{6g(t)}{p}=1\right\}\right|
 =
 \theta \log\log N+B_1 + O\left(\frac{1}{\log N}\right),\] 
where the implied constants depend only on $g$ and $h$.
\end{lemma}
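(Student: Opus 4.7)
The plan is to interpret both sums as sums over degree-one prime ideals in number fields and then invoke the prime ideal theorem (Landau's analogue of Mertens' formula for number fields) together with partial summation.

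First, fix an irreducible factor $h_i$ of $h$ and let $L_i$ be its splitting field, with ring of integers $\mathcal{O}_{L_i}$. By Dedekind's theorem, for all but finitely many primes $p$ (those dividing $\disc(h_i)$, the leading coefficient of $h_i$, or the index of $\Z[\alpha_i]$ in $\mathcal{O}_{L_i}$), the number of roots of $h_i$ in $\F_p$ equals the number of prime ideals of $\mathcal{O}_{L_i}$ of residue degree one above $p$. Moreover, the total contribution from the finite set of exceptional primes is $O(1)$ and can be absorbed in the constant $B_0$. Applying the prime ideal theorem with classical zero-free region and partial summation yields
\[
\sum_{\substack{\mathfrak{p}\subset\mathcal{O}_{L_i}\\ N\mathfrak{p}\leq N,\ \deg\mathfrak{p}=1}}\frac{1}{N\mathfrak{p}}=\log\log N+B_i+O\left(\frac{1}{\log N}\right),
\]
where $B_i$ depends only on $L_i$. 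Summing over $i=1,\dots,c$ yields the first asymptotic with $B_0\coloneqq B_1+\cdots+B_c$, modulo the $O(1)$ error from excluded primes.

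For the second sum, fix $i$ and consider the quadratic extension $M_i\coloneqq L_i(\sqrt{6g(\alpha_i)})$ of $L_i$. The key observation is: for a prime $p$ outside a finite bad set, a root $t$ of $h_i$ in $\F_p$ corresponds to a degree-one prime $\mathfrak{p}$ of $L_i$ above $p$, and the condition $\left(\frac{6g(t)}{p}\right)=1$ is equivalent to $6g(\alpha_i)$ being a square in the residue field $\mathcal{O}_{L_i}/\mathfrak{p}\cong\F_p$, which in turn is equivalent to $\mathfrak{p}$ splitting in $M_i/L_i$. In \emph{Case 1} ($6g(\alpha_i)\in L_i^2$, so $\theta_i=1$), we have $M_i=L_i$ and the condition is automatic, so the inner count is unchanged and contributes $\log\log N+O(1)$. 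In \emph{Case 2} ($6g(\alpha_i)\notin L_i^2$, so $\theta_i=\tfrac12$), a degree-one prime $\mathfrak{p}$ of $L_i$ above $p$ that splits in $M_i$ gives rise to two degree-one primes of $M_i$ above $p$, while an inert $\mathfrak{p}$ gives one degree-two prime and thus contributes nothing; hence
\[
\sum_{\substack{\mathfrak{p}\subset\mathcal{O}_{L_i},\ \deg\mathfrak{p}=1\\ \mathfrak{p}\text{ split in }M_i/L_i,\ N\mathfrak{p}\leq N}}\frac{1}{N\mathfrak{p}}=\frac{1}{2}\sum_{\substack{\mathfrak{P}\subset\mathcal{O}_{M_i}\\ N\mathfrak{P}\leq N,\ \deg\mathfrak{P}=1}}\frac{1}{N\mathfrak{P}}=\frac{1}{2}\log\log N+O(1),
\]
again by the prime ideal theorem for $M_i$ and partial summation. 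In both cases the contribution matches $\theta_i\log\log N$, and summing over $i$ gives $\theta\log\log N+B_1+O(1/\log N)$.

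The principal technical obstacle is bookkeeping: one must show that the finitely many ramified or otherwise bad primes—those where Dedekind's theorem fails, where $h$ or $g$ is not well-behaved, where $h_i$ and $h_j$ collide modulo $p$ for $i\neq j$ (relevant because $h$ is squarefree globally but the $h_i$ could share roots mod $p$), or where $6g(\alpha_i)$ is a unit but not a unit mod $\mathfrak{p}$—contribute only $O(1)$, and to propagate the $O(1/\log N)$ error term from the prime ideal theorem through partial summation uniformly in $i$. The hypothesis that $h$ is squarefree and coprime to $g$ over $\Q$ ensures that these bad sets are finite, and the effective prime ideal theorem with Landau-type error gives the required $O(1/\log N)$ uniformly.
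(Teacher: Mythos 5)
Your overall plan—reinterpret both counts in terms of degree-one prime ideals of number fields and then apply the prime ideal theorem with partial summation—is sound and genuinely different from the paper's proof, which instead builds the single Galois closure $K=\Q(\{\alpha_i,\sqrt{6g(\alpha_i)}\})$, applies Chebotarev directly to $K$, and identifies the density constant as a Burnside orbit count on the set $\cQ=\{(\alpha_i,\pm\sqrt{6g(\alpha_i)})\}$.

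However, there is a real gap in the way you set up the number fields. You work with the \emph{splitting field} $L_i$ of $h_i$ throughout, but both of the key per-prime identities you invoke are statements about the \emph{stem field} $\Q(\alpha_i)$, not $L_i$. Concretely, the Dedekind correspondence matches roots of $h_i$ in $\F_p$ to degree-one primes of $\Q(\alpha_i)$ above $p$; when $L_i\supsetneq\Q(\alpha_i)$ (for instance $h_i$ a cubic with nonsquare discriminant), a prime $p$ that splits completely in $L_i$ contributes $[L_i:\Q]>\deg h_i$ degree-one primes of $L_i$ but only $\deg h_i$ roots of $h_i$ mod $p$, while a prime where $h_i$ has exactly one root in $\F_p$ can contribute zero degree-one primes of $L_i$; so the asserted equality of counts fails. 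More seriously, your Case 1 claim—"if $6g(\alpha_i)\in L_i^2$ then $M_i=L_i$ and the Legendre condition is automatic"—is false when $\sqrt{6g(\alpha_i)}\in L_i\setminus\Q(\alpha_i)$. A test example: $h_i(t)=t^3-2$, $g(t)=-2$, so $6g(\alpha_i)=-12$ and $\sqrt{-12}=2\sqrt{-3}\in L_i=\Q(\sqrt[3]{2},\omega)$, yet $\sqrt{-12}\notin\Q(\sqrt[3]{2})$; the condition $\leg{-12}{p}=\leg{-3}{p}=1$ cuts out the primes $p\equiv 1\bmod 3$ and is not automatic. In this case your Case 1 predicts a contribution of $\log\log N$, but the actual contribution to the left-hand side is $\tfrac12\log\log N$. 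The correct criterion throughout is whether $6g(\alpha_i)$ is a square in $\Q(\alpha_i)$, and the auxiliary quadratic extension should be $M_i=\Q(\alpha_i,\sqrt{6g(\alpha_i)})$.

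Once you replace $L_i$ by $\Q(\alpha_i)$ everywhere, your argument goes through and gives a legitimate alternative to the paper's. One small point worth recording is that this change exposes a subtlety in the lemma as stated: the statement defines $\theta_i$ via the splitting field $L_i$, whereas both the corrected version of your proof and the paper's own orbit-count argument (the orbit of $(\alpha_i,\sqrt{6g(\alpha_i)})$ has size $[\Q(\alpha_i,\sqrt{6g(\alpha_i)}):\Q]$, which depends only on $\Q(\alpha_i)$) deliver the stem-field criterion; in all of the paper's applications the relevant factors are linear or quadratic, so $\Q(\alpha_i)=L_i$ and the distinction is invisible. As for the comparison of methods: your prime-ideal-theorem route keeps the two sums visibly parallel and avoids the Burnside step, while the paper's Chebotarev-plus-Burnside argument treats the whole polynomial $h$ at once inside a single Galois extension; the two are essentially equivalent in strength and both rely on the effective Chebotarev/prime ideal theorem with a classical zero-free region for the $O(1/\log N)$ error.
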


\begin{proof}
The first part is~\cite[Proposition~3.10(e)]{SerreNp}, so we focus on the second part.

Let $d=\deg h$.
 Consider the set 
 \[\cQ=\left\{(\alpha_i,\sqrt{6g(\alpha_i)}),(\alpha_i,-\sqrt{6g(\alpha_i)}):i=1,\dots,d\right\},\]
 where $\alpha_1,\dots,\alpha_d$ are the distinct roots of $h(t)$ over $\overline{\Q}$. Let $K\coloneqq\Q(\{\alpha_i,\sqrt{6g(\alpha_i)}:i=1,\dots,d\})$. Notice that $K/\Q$ is Galois and $\Gal(K/\Q)$ permutes the elements in $\cQ$. Suppose that $p$ is a prime that does not divide the discriminant of $K$. Let $\sigma_p\coloneqq(K/\Q,p)$ denote the conjugacy class of Artin symbols in $\Gal(K/\Q)$ associated to the prime $p$. The elements in $K$ that are defined over $\F_p$ are precisely those fixed by $\sigma_p$. Since $h$ and $g$ have no common roots over $\overline{\Q}$, it is clear that $g(\alpha_i)\neq 0$. Let $c(\sigma)$ be the number of points in $\cQ$ that are fixed by $\sigma\in \Gal(K/\Q)$. For $p\nmid \Res(g,h)$, $h(t)\equiv g(t)\equiv 0\bmod p$ has no solution in $t\in\F_p$, so 
 \[\left|\left\{t\in\F_p: h(t)=0,\ \leg{6g(t)}{p}=1\right\}\right|=\frac{1}{2}c(\sigma_p).\]
 By the Chebotarev density theorem, we have
 \[
 \sum_{p\leq N}c(\sigma_p)=\frac{\sum_{\sigma\in\Gal(K/\Q)}c(\sigma)}{|\Gal(K/\Q)|}\cdot \frac{N}{\log N}+O\left(\frac{N}{(\log N)^2}\right).
 \]
 By Burnside's lemma, we find that
 \[
 \frac{\sum_{\sigma\in\Gal(K/\Q)}c(\sigma)}{|\Gal(K/\Q)|}
 \] is the number of $\Gal(K/\Q)$-orbits acting on $\cQ$, which equals $2\theta$. 
 The result follows by partial summation. 
\end{proof}

\begin{lemma}\label{lemma:equidistvD}
Let $D\in\Z[x,y]$ be a weighted homogeneous polynomial with weights $\tau,1$, and let $B(x,y)$ be as in~\eqref{eq:defAB}. Assume that $D$ and $B$ have no common zeros in $\PP^1_{(\tau,1)}(\overline{\Q})$.
Then there exists some constant $M$ such that
 \[
 \sum_{p\leq N}\frac{1}{p^2}\left|\left\{ (a,b)\in\F_p^2:
 \begin{array}{l}
 \gcd(a,b,p)=1,\\
 D(a,b)=0,\ \leg{6B(a,b)}{p}=1 
 \end{array}
 \right\}\right|
 =v(D)\log\log N+M+O\left(\frac{1}{\log N}\right),
\]
where $v(D)$ is as defined in Definition~\ref{def:uv}.
\end{lemma}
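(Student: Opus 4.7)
The plan is to reduce to Lemma~\ref{lemma:equiprimes} by parametrizing the weighted projective line $\PP^1_{(\tau,1)}$ via $t = a/b^{\tau}$ on the open set $\{b \neq 0\}$. Write $D = y^{\epsilon} \tilde{D}$ with $\epsilon \in \{0,1\}$ and $y \nmid \tilde{D}$, and set $h(t) \coloneqq \tilde{D}(t, 1) \in \Z[t]$. By the hypothesis that $D$ and $B$ have no common zeros in $\PP^1_{(\tau,1)}(\overline{\Q})$, the squarefree part of $h$ has no repeated roots and is coprime to $g$ in $\Q[t]$, and its irreducible factors over $\Q$ correspond to those of $\tilde{D}$ in $\Q[x,y]$ via $R \mapsto R(t,1)$. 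For such an irreducible factor $R$ with zero $\alpha = [t_0 : 1]$ over a field $L$, we have $B(\alpha) = g(t_0)$, so the value $\theta(R)$ from Definition~\ref{def:uv} matches the corresponding $\theta_i$ appearing in Lemma~\ref{lemma:equiprimes} applied to $h$.

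Fix a prime $p$ outside a finite bad set (in particular, $p \nmid \Res(h,g)$ and $p$ avoids the discriminants of the relevant polynomials), and split the count into the parts with $b \neq 0$ and $b = 0$. For $b \in \F_p^{\times}$, weighted homogeneity gives $D(t_0 b^{\tau}, b) = 0$ iff $h(t_0) = 0$, with each root $t_0$ contributing $p - 1$ pairs $(t_0 b^{\tau}, b)$. Likewise $B(a,b) = b^{3\varsigma} g(t_0)$, so
\[
\leg{6B(a,b)}{p} = \leg{6 g(t_0)}{p} \cdot \leg{b}{p}^{3\varsigma}.
\]
When $\deg B = 3\varsigma$ is even, the second factor equals $1$ and the condition reduces to $\leg{6g(t_0)}{p} = 1$, independent of $b$; dividing by $p^2$, summing over $p \leq N$ and applying the second formula of Lemma~\ref{lemma:equiprimes} yields $\theta(\tilde{D}) \log\log N + O(1)$. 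When $\deg B$ is odd, exactly $(p-1)/2$ values of $b \in \F_p^{\times}$ satisfy the character condition for each fixed root $t_0$ (using $g(t_0) \not\equiv 0 \bmod p$), and the first formula of Lemma~\ref{lemma:equiprimes} gives $\tfrac{1}{2} u(\tilde{D}) \log\log N + O(1)$. Either way, the result matches $v(\tilde{D}) \log\log N$ by the definition of $v$.

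For the $b = 0$ contribution, the set is empty unless $\epsilon = 1$; in that case the no-common-zero hypothesis forces $y \nmid B$, so $B(a,0) = c a^{3\varsigma/\tau}$ for some nonzero constant $c$, and a direct computation of
\[
\sum_{p \leq N} \frac{1}{p^2}\, \bigl|\{a \in \F_p^{\times} : \leg{6 c a^{3\varsigma/\tau}}{p} = 1\}\bigr|
\]
via Chebotarev for the field $\Q(\sqrt{6c})$ supplies precisely the missing $\theta(y)$ contribution (which is only defined in the case $\deg B$ even, as required). Adding the two contributions, absorbing the finitely many bad primes into the additive constant $M$, and inheriting the error term $O(1/\log N)$ from Lemma~\ref{lemma:equiprimes} completes the proof.

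The main technical point is the bookkeeping in the $b = 0$ case when $y \mid D$: the character count depends on the parity of the exponent $3\varsigma/\tau$ and on whether $6c \in \Q^{\times 2}$, and these must be matched carefully to $\theta(y)$ in Definition~\ref{def:uv} (noting that $L = \Q$, $\alpha = [1:0]$, and $B(\alpha) \bmod \Q^{\times 2}$ is well-defined precisely because $\deg B$ is even, so $B(\lambda^{\tau}, 0) = \lambda^{3\varsigma} c$ is $c$ modulo squares in $\Q$). The analogous reconciliation for the $b \neq 0$ part is straightforward thanks to the clean identification $B(t_0, 1) = g(t_0)$.
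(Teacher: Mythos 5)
Your proposal follows essentially the same route as the paper: restrict to $b\neq 0$ to reduce to Lemma~\ref{lemma:equiprimes} applied to $D(t,1)$, and handle $b=0$ separately by a direct Chebotarev computation, then sum. The step that differs is the $b=0$ accounting. The paper works at the level of $\PP^1_{(\tau,1)}(\F_p)$ and counts $\lambda$ with $\leg{\lambda}{p}^{\varsigma}$, so its case split is on the parity of $\varsigma$; your computation $B(a,0)=c\,a^{3\varsigma/\tau}$ correctly shows that the relevant parity is that of $\deg g = 3\varsigma/\tau$. These agree when $\tau$ is odd, but can differ when $\tau$ is even (e.g.\ $\tau=2$, $\varsigma=2$, $\deg g=3$), and in that regime your direct count over $a\in\F_p^{\times}$ is the correct one: the paper's identity $|\mathcal{D}(p)|=\sum_{\alpha}|\{\lambda:\dots\}|$ tacitly assumes each weighted projective orbit has size $p-1$, which fails for $b=0$ orbits when $\gcd(\tau,p-1)>1$. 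On the other hand, your assertion that the Chebotarev computation for $\Q(\sqrt{6c})$ ``supplies precisely the missing $\theta(y)$ contribution'' isn't quite justified when $\deg g$ is odd: there the count is $(p-1)/2$ independently of $6c$, giving $\tfrac12\log\log N$, whereas $\theta(y)$ as written in Definition~\ref{def:uv} with $\alpha=[1:0]$ would read $1$ whenever $6c\in\Q^{2}$. This exposes an ambiguity in the definition of $\theta(y)$ for $\tau>1$ (the square class of $B([a:0])$ depends on $a$ when $\deg g$ is odd), which you flag as ``must be matched carefully'' but do not resolve. To be fair, the paper's own proof has exactly this same wrinkle, and the discrepancy never arises in any of the applications in Section~\ref{sec:fam}; but a complete proof of the lemma in full generality needs to pin down the $b=0$ contribution as $\tfrac12\log\log N$ when $\deg g$ is odd and verify that this is what $v(D)$ intends.
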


\begin{proof}
Consider the set
 \[
 \mathcal{D}(p)\coloneqq\left\{ (a,b)\in\F_p^2:
 \gcd(a,b,p)=1,\ 
 D(a,b)=0,\ \leg{6B(a,b)}{p}=1
 \right\}.
 \] 
By Lemma~\ref{lemma:resultantnonhom}, we may assume that $D$ and $B$ have no common zeros in $\PP^1_{(\tau,1)}(\F_p)$ after excluding finitely many primes.
We partition $\mathcal{D}(p)$ according to their image in $\PP^1_{(\tau,1)}(\F_p)$. 
For each $\alpha\in \PP^1_{(\tau,1)}(\F_p)$, fix a representative in $\F_p^2$, and take $\mathcal{C}(p)$ to be the set of all such representatives.
Recall that $D$ and $B$ are weighted homogeneous with weights $\tau,1$, and $B$ has weighted degree $3\varsigma$. Therefore
\begin{align*}
 |\mathcal{D}(p)|&=\sum_{(a,b)\in\mathcal{C}(p)}\left|\left\{ \lambda\in\F_p^{\times}:
 D(a,b)=0,\ \leg{6\lambda^{\varsigma}B(a,b)}{p}=1
 \right\}\right|\\ 
 &=\sum_{\substack{(a,b)\in\mathcal{C}(p)\\D(a,b)=0}}\left|\left\{ \lambda\in\F_p^{\times}: \leg{\lambda}{p}^{\varsigma}=\leg{6B(a,b)}{p}
 \right\}\right|.
\end{align*}
Since $\leg{\lambda}{p}$ is $1$ for $\frac{1}{2}(p-1)$ of $\lambda\in\F_p$ and is $-1$ for $\frac{1}{2}(p-1)$ of $\lambda\in\F_p$, we see that
\[\left|\left\{ \lambda\in\F_p^{\times}: \leg{\lambda}{p}^{\varsigma}=\leg{6B(a,b)}{p}
 \right\}\right|=
 \begin{cases}
 \hfil \frac{1}{2}(p-1)&\text{ if }\varsigma\text{ is odd},\\
 \hfil (p-1)\mathbf{1}_{\leg{6B(a,b)}{p}=1}&\text{ if }\varsigma\text{ is even}.
 \end{cases}\]
Therefore
\begin{align*}
 |\mathcal{D}(p)|
 &=
 \begin{cases}
 \hfil \frac{1}{2}(p-1)\left|\left\{[a:b]\in\PP^1_{(\tau,1)}(\F_p):D(a,b)=0\right\}\right|&\text{ if }\varsigma\text{ is odd},\\
 \hfil (p-1)\left|\left\{[a:b]\in\PP^1_{(\tau,1)}(\F_p):D(a,b)=0,\ \leg{6B(a,b)}{p}=1\right\}\right|&\text{ if }\varsigma\text{ is even}.
 \end{cases}
\end{align*}
Note that the conditions no longer depend on the choice of representatives of each class in $\PP^1_{(\tau,1)}(\F_p)$.
We may split up the sets according to whether or not $[a:b]=[1:0]$, so 
$|\mathcal{D}(p)|=C_0(p)+C_1(p)$, where
\begin{align*}
 C_0(p)
 &\coloneqq
 \begin{cases}
 \hfil \frac{1}{2}(p-1)\mathbf{1}_{p\mid D(1,0)}&\text{ if }\varsigma\text{ is odd},\\
 \hfil (p-1)\mathbf{1}_{p\mid D(1,0),\ \leg{6B(1,0)}{p}=1}&\text{ if }\varsigma\text{ is even},
 \end{cases}\\
 C_1(p)
 &\coloneqq
 \begin{cases}
 \hfil \frac{1}{2}(p-1)\left|\left\{t\in\F_p:D(t,1)=0\right\}\right|&\text{ if }\varsigma\text{ is odd},\\
 \hfil (p-1)\left|\left\{t\in\F_p:D(t,1)=0,\ \leg{6B(t,1)}{p}=1\right\}\right|&\text{ if }\varsigma\text{ is even}.
 \end{cases}
\end{align*}

Observe that
\[
\sum_{p\leq N}\frac{C_0(p)}{p^2}
=c_0\mathbf{1}_{y\mid D}\log\log N+M_0+O\left(\frac{1}{\log N}\right),
\]
where $M_0$ is a constant and
\[
c_0=\begin{cases}
 \hfil \frac{1}{2}&\text{if }\varsigma\text{ is odd or } 6B(1,0)\notin\Q^2,\\
 \hfil 1&\text{if }\varsigma\text{ is even and }6B(1,0)\in\Q^2.
\end{cases}
\]
Finally apply Lemma~\ref{lemma:equiprimes} to the contribution from $C_1(p)$ yields
\[
\sum_{p\leq N}\frac{C_1(p)}{p^2}
=c_1\log\log N+M_1+O\left(\frac{1}{\log N}\right),
\]
where $M_1$ is a constant and $c_1$ is the number of distinct irreducible factors of $D(t,1)$ in $\Q[t]$ when $\varsigma$ is odd, and $\theta$ as defined in Lemma~\ref{lemma:equiprimes} when $\varsigma$ is even. We can readily verify that $c_0\mathbf{1}_{y\mid D}+c_1=v(D)$. This completes the proof.
\end{proof}

\subsection{Sandwiching the Tamagawa ratio}
In this section, we define a sequence of random variables $(\rY^{\circ}_p)_p$ indexed by primes that approximates the Tamagawa ratio of the elliptic curves in our families. We then introduce two variants $(\rY^+_p)_p$ and $(\rY^-_p)_p$ of $(\rY^{\circ}_p)_p$, which are better suited to applications of the central limit theorem from Section~\ref{sec:central}.
 For each prime $p$, define $\rY^{\circ}_p:\F_p^2\rightarrow\{-1,0,1\}$ as follows.
 \begin{itemize} 
 \item If $\ell\geq 3$ and $p\nmid \gcd(A(a,b),B(a,b))\gcd(D_+(a,b),D_-(a,b))$, set
 \[
 \rY^{\circ}_p(a,b)=\begin{cases}
 1 &\text{ if } p\mid D_+(a,b) \text{ and }\leg{6B(a,b)}{p}=1,\\
 -1 &\text{ if } p\mid D_-(a,b) \text{ and }\leg{6B(a,b)}{p}=1.
 \end{cases}
 \]
\item If $\ell=2$ and $p\nmid \gcd(A(a,b),B(a,b))\gcd(D_+(a,b),D_-(a,b))$ set
 \[
 \rY^{\circ}_p(a,b)=\begin{cases}
 1 &\text{ if } p\mid D^{(1)}_+(a,b)\text{ or }p\mid D^{(2)}_+(a,b)\text{ and }\leg{6B(a,b)}{p}=1,\\
 -1 &\text{ if } p\mid D^{(1)}_-(a,b)\text{ or }p\mid D^{(2)}_-(a,b)\text{ and }\leg{6B(a,b)}{p}=1.
 \end{cases}
 \]
 \end{itemize}
 In any other case set $\rY^{\circ}_p(a,b)=0$.
\begin{lemma}\label{lemma:sandiwch}
Let $s\coloneqq\gcd(f^3,g^2)$. We impose the following conditions. 
\begin{itemize}
 \item 
If $\delta=1$ and $\ell\geq 5$, assume that
\begin{itemize}
 \item if $\deg s>0$, then $\upsilon=\tau=1$ and $s=k^r$, where $k$ is a squarefree polynomial in $\Q[t]$ and $r\in\{2,3,4,6\}$;
 \item $\deg s< \frac{24m}{2+m}$;
 \item if $\mult_k f=2$ and $\mult_k g=3$, then $\mult_k(4f^3+27g^2)=6$.
\end{itemize}
\item If $\ell\in\{2,3\}$, assume that $\deg s=0$. 
\end{itemize}
 There exist
 $\rY^+_p,\rY^-_p:(\Z/p^2\Z)^2\rightarrow \{-1,0,1\}$ such that 
 \begin{enumerate}
 \item Other than finitely many $p$, we have 
 $\rY^-_p(a,b)\leq \rY^{\circ}_p(a,b)\leq \rY^+_p(a,b)$
 for every $(a,b)\in(\Z/p^2\Z)^2$;
 \item Other than finitely many $p$, we have 
 \[\ell^{\rY_p^{-}(a,b)}\leq \frac{c_p(E_{a,b}')}{c_p(E_{a,b})}\leq \ell^{\rY_p^{+}(a,b)}\]
 for all $(a,b)\in\cT_{\upsilon,\tau}$ such that $\Disc(E_{a,b})\neq 0$;
 \item 
 $\left|\left\{(a,b)\in(\Z/p^2\Z)^2:\rY^-_p(a,b)\neq \rY^+_p(a,b)\right\}\right|\ll p^2$
 uniformly for all $p$;
 \item Each $\rY_p^\pm$ is constant on the set
 $\{(a,b)\in (\Z/p^2\Z)^2: p\mid \gcd(A(a,b),B(a,b)\}$;
 \item If $m=\delta=1$, then $\rY_p^-(a,b)=\rY_p^+(a,b)=0$ whenever $p\mid \Lambda$, where $\Lambda$ is defined in \eqref{def:Eps}; 
 \item If $\upsilon=\tau=1$, then $\rY_p^-(a,b)=\rY_p^+(a,b)=0$ whenever $p\mid \gcd(A(a,b),B(a,b))$.
 \end{enumerate}
\end{lemma}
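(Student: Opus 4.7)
The strategy is to upgrade $\rY_p^{\circ}$, which naturally lives on $(\Z/p\Z)^2$, to sandwich functions $\rY_p^{\pm}$ on $(\Z/p^2\Z)^2$. On a ``good'' locus where the hypotheses of Lemma~\ref{lemma:tamratiocond} are all met I will set $\rY_p^{+}=\rY_p^{-}=\rY_p^{\circ}$, so that both (1) and (2) hold with equality. On a ``bad'' residual locus I will use the trivial sandwich $\rY_p^{-}=-1$, $\rY_p^{+}=+1$, which is always legitimate because $c_p(E'_{a,b})/c_p(E_{a,b})\in\{\ell^{-1},1,\ell\}$ at every $p\nmid\ell$ for an isogeny of prime degree~$\ell$. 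Passing from $(\Z/p\Z)^2$ to $(\Z/p^2\Z)^2$ is necessary for two reasons: for $\ell=2$ the formula in Lemma~\ref{lemma:tamratiocond} depends on the parity of $v_p(D_{\pm}(a,b))$, which is determined by $(a,b)\bmod p^2$ generically; and cutting the bad locus to density $1/p^2$ (as required by (3)) needs a codimension-two condition, which is captured mod~$p^2$ but not mod~$p$.

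Concretely, fix a finite set $\cP$ of exceptional primes, depending only on $f,g,\ell$: those dividing $6\ell cc'\Lambda$ and the resultants witnessing (i) coprimality of $D_+, D_-$ as weighted-homogeneous polynomials and (ii) the absence of non-trivial common projective zeros of $A,B$ in the regime $\deg s=0$. For $p\in\cP$ set $\rY_p^{\pm}\equiv 0$; this is absorbed by the ``other than finitely many $p$'' clauses and simultaneously satisfies~(5). For $p\notin\cP$, define
\[
\cB_p \coloneqq \bigl\{(a,b)\in (\Z/p^2\Z)^2 : p\mid \gcd(D_+(a,b), D_-(a,b))\bigr\} \cup \bigl\{(a,b) : p^2\mid R(a,b)\bigr\} \cup \cE_p,
\]
where $R$ is the squarefree product of the distinct irreducible factors of $D_+D_-$, the $p^2$-divisibility is relevant only for $\ell=2$, and $\cE_p$ is the locus where the edge condition ``$3v_p(A(a,b))=2v_p(B(a,b))>0\Rightarrow 3v_p(A)=v_p(\Delta)$ or $4\nmid v_p(A)$'' of Lemma~\ref{lemma:tamratiocond} is violated. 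Set $\rY_p^{\pm}=\rY_p^{\circ}$ outside $\cB_p\cup\{p\mid \gcd(A,B)\}$, set $\rY_p^{\pm}=0$ on $\{p\mid \gcd(A,B)\}$ (to enforce (4) and (6)), and use the trivial sandwich on the remainder of $\cB_p$.

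Conditions (1), (2), (4), (5), (6) will then follow immediately from the construction, modulo one consistency check: that the Tamagawa ratio at points with $p\mid\gcd(A(a,b),B(a,b))$ and $(a,b)\in\cT_{\upsilon,\tau}$ actually equals~$1$, so that $\rY_p^{\pm}=0$ is compatible with~(2). For $\deg s=0$ (covering $\ell\in\{2,3\}$) the relevant intersection is empty for $p$ large, by coprimality of $f,g$. For $\delta=1, \ell\geq 5, \deg s>0$, the hypothesis $\mult_k(4f^3+27g^2)=6$ pins the Kodaira type on this locus to an additive one whose Tamagawa number lies in $\{1,2,3,4\}$, and then the general constraint $c_p(E')/c_p(E)\in\{\ell^{-1},1,\ell\}$ with $\ell\geq 5$ forces the ratio to equal $1$.

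The main obstacle is~(3): showing $|\cB_p|\ll p^2$. The coprimality of $D_+$ and $D_-$ in $\Z[x,y]$ from~\eqref{eq:defDpm}, together with $p\notin\cP$ and the weighted-homogeneous resultant bound of Lemma~\ref{lemma:resultantnonhom}, forces any $(a,b)\in(\Z/p^2\Z)^2$ with $p\mid \gcd(D_+(a,b),D_-(a,b))$ to satisfy $(a,b)\equiv(0,0)\bmod p$, contributing exactly $p^2$ points. The condition $p^2\mid R(a,b)$ cuts the codimension-one locus $\{p\mid R(a,b)\}$ by an additional linear Hensel constraint at each smooth point, again producing $O(p^2)$ points; the non-smooth fibres disappear when $p\notin\cP$. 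Finally, $\cE_p\subseteq\{p^4\mid A(a,b),\ p^6\mid B(a,b)\}$, which by Lemma~\ref{lemma:rootbound} applied to $\gcd(A^3,B^2)$ has $O(p^2)$ residues mod~$p^2$. Summing the three contributions yields the required bound.
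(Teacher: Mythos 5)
Your overall architecture matches the paper's: match $\rY^{\circ}_p$ on a good locus, use the trivial sandwich $\pm1$ on a bad locus of density $O(p^{-2})$, and discard a finite set of exceptional primes. The bounds you give for $|\cB_p|$ are also along the right lines. But there is a genuine gap in the consistency check for condition~(2).

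You propose to set $\rY_p^{\pm}=0$ on all of $\{(a,b):p\mid\gcd(A(a,b),B(a,b))\}$ ``to enforce (4) and (6),'' and you justify compatibility with~(2) by claiming that, when $\deg s=0$, the intersection of this set with $\cT_{\upsilon,\tau}$ is empty for $p$ large, ``by coprimality of $f,g$.'' That claim is correct only when $\upsilon=\tau=1$. If $\upsilon\tau>1$ (e.g.\ $\upsilon=2$, $\tau=1$, the quadratic-twist families), a point $(a,b)=(pa_0,pb_0)$ with $\gcd(a_0,b_0,p)=1$ lies in $\cT_{2,1}$ for every $p$, and gives $v_p(A(a,b))=2$, $v_p(B(a,b))=3$ for all large $p$, so $p\mid\gcd(A(a,b),B(a,b))$ there. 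Worse, this is not harmless: such a point has additive reduction (type $I_0^*$), and for $\ell=2$ the constraint $c_p(E')/c_p(E)\in\{1/2,1,2\}$ does \emph{not} force the ratio to be $1$ --- it can genuinely be $2$ or $1/2$. So $\rY_p^{\pm}=0$ violates~(2) at those points. Notice that condition~(4) only requires $\rY_p^{\pm}$ to be \emph{constant} on the additive locus (not zero), and condition~(6) only applies when $\upsilon=\tau=1$; the paper therefore keeps the trivial sandwich $\rY_p^{\pm}=\pm1$ on $\{p\mid\gcd(A,B)\}$ in general, and replaces it by $0$ only in the $\upsilon=\tau=1$ case, where (as you say) the set meets $\cT_{1,1}$ only for $p\mid\Res(A,B)$. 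Your construction must be split the same way.

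A second, smaller issue: in the $\deg s>0$ case your Kodaira-type argument only addresses the sub-case $\mult_k f=2$, $\mult_k g=3$ (where the hypothesis $\mult_k(4f^3+27g^2)=6$ is invoked). You should also note that in the remaining cases one has $3\mult_k f\neq 2\mult_k g$, which already rules out multiplicative reduction by Lemma~\ref{lem:multgen}, so the reduction is again additive and the $\ell\geq5$ argument applies. The paper instead verifies the hypotheses of Lemma~\ref{lemma:tamratiocond} directly in all sub-cases, which is cleaner, but your route can be completed.
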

\begin{proof}
Recall the factorisation of $\Delta$ given in~\eqref{eq:defDpm}.
First, consider the case when $\deg s=0$, so $A,B$ have no common factors.
Let $R$ be the product of all distinct irreducible factors of $D$.
Then $\rY^{\circ}_p$ agrees with the local Tamagawa ratios given in Lemma~\ref{lemma:tamratiocond} at all primes $p$ satisfying $p\nmid 6\ell\cdot \gcd(A(a,b),B(a,b))$, $ p^2\nmid R(a,b)$, and $v_p(c)=v_p(c')=0$.
For all sufficiently large $p$, take
 \begin{equation}\label{eq:defrYpm}
 \rY_p^\pm(a,b)=\begin{cases}
 \hfil\rY^{\circ}_p(a,b)&\text{if }p\nmid \gcd(A(a,b),B(a,b))\text{ and } p^2\nmid R(a,b),\\
 \hfil\pm1&\text{if }p\mid \gcd(A(a,b),B(a,b))\text{ or } p^2\mid R(a,b).
 \end{cases}
 \end{equation}
 Since $A,B$ are coprime, if $p\mid \gcd(A(a,b),B(a,b))$ then either $p\mid \Res(A,B)$ or $p\mid \gcd(a,b)$, so this contributes $O(p^2)$ many classes $(a,b)\bmod p^2$.
 Applying Lemma~\ref{lemma:rootbound} to $R$ shows that the number of $(a,b)\bmod p^2$ such that $p^2\mid D(a,b)$ is $O(p^2)$.
 If $\upsilon=\tau=1$, then by considering $\Res(A,B)$, we see that there are only finitely many primes such that such that $p\mid \gcd(A(a,b),B(a,b))$ for some $(a,b)\in\cT_{1,1}$, so we may set $\rY_p^{\pm}(a,b)=0$ when $p\mid \gcd(A(a,b),B(a,b))$ holds, while still requiring that \eqref{eq:defrYpm} holds for sufficiently large primes.
 This completes the proof in the case $\deg s=0$. Similarly, when $m=\delta=1$, we can set $\rY_p^-(a,b)=\rY_p^+(a,b)=0$ whenever $p\mid \Lambda$ since this only affects finitely many primes.

It remains to handle the case when $\deg s>0$. Under our restrictions, $\upsilon=\tau=1$ and $\ell\neq 2,3$. We check that the assumptions of Lemma~\ref{lemma:tamratiocond} are satisfied for all $(a,b)\in\cT_{1,1}$ and all primes $p$ outside of a finite set.
 Let $i\coloneqq \mult_k f$ and $j\coloneqq\mult_k g$. Let $K(x,y)=b^{\deg k}k(x/y)$ and write $A=K^iA'$ and $B=K^jB'$, so $K,A',B'$ are pairwise coprime. 
Assume that $p\nmid 6\ell\Res(D_+,D_-)\Res(K,A'B')\Res(A',B')$ and $v_p(c)=v_p(c')=0$. 
If $p\mid \gcd(A(a,b),B(a,b))$, then since $p\nmid \Res(K,A'B')\Res(A',B')$, this forces $p\mid K(a,b)$, and we further have $v_p(A(a,b))=iv_p(K(a,b))$ and $v_p(B(a,b))=jv_p(K(a,b))$. If $r=\min\{3i,2j\}\in\{2,3,4\}$, then it is clear that $3i\neq 2j$ so $3v_p(A(a,b))\neq 2v_p(B(a,b))$. If $r=6$, then our assumptions implies that $3i\neq 2j$ or $3i=2j=\mult_K \Delta$, so either $3v_p(A(a,b))\neq 2v_p(B(a,b))$ or $3v_p(A(a,b))=2v_p(B(a,b))=v_p(\Delta(a,b))$ 
In the case when $6=3i=2j=\mult_K \Delta$, remove the prime divisors of $\Res(K^r,\Delta/K^r)$. Therefore we may apply Lemma~\ref{lemma:tamratiocond} to $(a,b)\in\cT_{1,1}$. In particular, for any $p\mid \gcd(A(a,b),B(a,b))$ not in the excluded set, we have $p\nmid D_+(a,b)D_-(a,b)$ and Lemma~\ref{lemma:tamratiocond} shows that the local Tamagawa ratio is $1$, which agrees with $\rY^{\circ}_p(a,b)=0$. 
Other than finitely many $p$, Lemma~\ref{lemma:tamratiocond} then shows that $c_p(E_{a,b}')/c_p(E_{a,b})= \rY^{\circ}_p(a,b)$ holds for all $(a,b)\in\cT_{1,1}$. Therefore we may simply take $\rY_p^-(a,b)=\rY_p^+(a,b)=\rY^{\circ}_p(a,b)$ outside of a finite set of primes.
\end{proof}

\begin{lemma}\label{lemma:sandwichinput}
Take either $\upsilon=\tau=1$ or $\delta=m=1$. Under the assumptions of Lemma~\ref{lemma:sandiwch}, 
let $(\rY_p)_p$ be either $(\rY_p^{-})_p$ or $(\rY_p^{+})_p$.
 Then there exists a constant $\xi>0$ and a probability measure $\mathbf{P}$ on $\prod_p(\Z/p^2\Z)^2$ such that for any squarefree positive integer $q$ and $(\kappa_p)_{p\mid q}\in\{\pm 1\}^{\omega(q)}$, we have
\begin{equation}\label{eq:probassumpver}
 \frac{\left|\left\{(a,b)\in \cA^{\delta}_{\upsilon}(N):
(\rY_p(a,b))_{p\mid q}=(\kappa_p)_{p\mid q} \right\}\right|}{|\cA^{\delta}_{\upsilon}(N)|}=
 \prod_{p\mid q}\mathbf{P}\left(\rY_p=\kappa_p\right)+O\left(q^{4}N^{-\xi}\right)
\end{equation}
 for all sufficiently large $N$.
 Moreover
\begin{equation}\label{eq:musig}
\sum_{p\leq N}\mathbf{P}(\rY_p=\pm 1)=c_{\pm}\log \log N+M_{\pm}+O\left(\frac{1}{\log N}\right),
\end{equation}
for some constants $M_{\pm}$ and $c_{\pm}$ defined in~\eqref{eq:cpmdef}.
\end{lemma}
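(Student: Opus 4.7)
The plan is to construct $\mathbf{P}$ as an infinite product of local measures coming from the equidistribution results of Section~\ref{sec:lattice}, then verify~\eqref{eq:probassumpver} through the Chinese Remainder Theorem and~\eqref{eq:musig} through the Chebotarev-type estimates of Lemmas~\ref{lemma:equiprimes} and~\ref{lemma:equidistvD}.

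First I would construct the local measure. For each prime $p$, define $\mathbf{P}_p$ on $(\Z/p^2\Z)^2$ by setting $\mathbf{P}_p(\{(a_1,b_1)\})$ equal to the main term appearing in the appropriate equidistribution proposition applied with modulus $p^2$: Proposition~\ref{prop:nonhomprob} when $\delta=m=1$, Proposition~\ref{prop:easySequi} when $\upsilon=\tau=1$ and $\delta=0$, and Proposition~\ref{prop:ratio} when $\upsilon=\tau=1$ and $\delta=1$. In each case, the density factor depends only on whether the class is compatible with $\cT_{\upsilon,\tau}$ (and, for Case~B, whether $p\nmid\Lambda$), so $\mathbf{P}_p$ is a probability measure on $(\Z/p^2\Z)^2$. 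Put $\mathbf{P}\coloneqq\prod_p \mathbf{P}_p$ on $\prod_p(\Z/p^2\Z)^2$.

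Next I would prove~\eqref{eq:probassumpver}. For each $p\mid q$ set $S_p\coloneqq\{(a_1,b_1)\in(\Z/p^2\Z)^2:\rY_p(a_1,b_1)=\kappa_p\}$; trivially $|S_p|\leq p^4$. The Chinese Remainder Theorem identifies $(\Z/q^2\Z)^2$ with $\prod_{p\mid q}(\Z/p^2\Z)^2$, so the joint event $(\rY_p(a,b))_{p\mid q}=(\kappa_p)_{p\mid q}$ is exactly the condition that $(a,b)\bmod q^2$ lies in $\prod_{p\mid q}S_p$. Applying the same proposition with modulus $q^2$ to each residue class in $\prod_{p\mid q}S_p$, the main terms factor across the primes dividing $q$ (since both the density factors $1/p^4$ and the local Euler factors $(1-p^{-\upsilon(1+\tau)})^{-1}$, respectively $\lambda(p)$, are multiplicative in the modulus). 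Summing these main terms reproduces $\prod_{p\mid q}\mathbf{P}_p(S_p)=\prod_{p\mid q}\mathbf{P}(\rY_p=\kappa_p)$. The accumulated error is bounded by $|\prod_p S_p|\cdot O(N^{-\xi'})\ll q^4 N^{-\xi'}$ for a $\xi'>0$ independent of $q$, which yields the claim upon taking $\xi=\xi'$.

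For~\eqref{eq:musig} I would unwind the definition of $\rY_p^\pm$ from~\eqref{eq:defrYpm} in terms of $\rY_p^\circ$ and the exceptional classes $\{p\mid\gcd(A,B)\}$ and $\{p^2\mid R\}$. By Lemma~\ref{lemma:rootbound} and the resultant bounds of Section~\ref{sec:setup}, each exceptional condition cuts out only $O(p^2)$ classes of $(\Z/p^2\Z)^2$ (hence contributes $O(p^{-2})$ to the marginal, absorbed into $M_\pm$). The dominant contribution is then the proportion of classes $(a,b)\bmod p$ with $\gcd(a,b,p)=1$ satisfying the defining conditions. When $\ell\geq 3$ I would apply Lemma~\ref{lemma:equidistvD} to $D_\pm$ to obtain leading coefficient $v(D_\pm)=v_\pm=c_\pm$. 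When $\ell=2$, the condition splits into $p\mid D_\pm^{(1)}(a,b)$ (no Legendre constraint, handled by Lemma~\ref{lemma:equiprimes}) and $p\mid D_\pm^{(2)}(a,b)$ with $\leg{6B(a,b)}{p}=1$ (handled by Lemma~\ref{lemma:equidistvD}), yielding leading coefficient $u(D_\pm^{(1)})+v(D_\pm^{(2)})=u_\pm^{(1)}+v_\pm^{(2)}=c_\pm$. The $O(1/\log N)$ error is inherited from those lemmas.

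The main obstacle is the bookkeeping in the last step: the marginal $\mathbf{P}_p(\rY_p=\pm 1)$ carries the density correction $(1-p^{-\upsilon(1+\tau)})^{-1}$ (or $\lambda(p)$) from the underlying equidistribution proposition, which must be shown not to perturb the leading $c_\pm\log\log N$ term. The key observation is that these corrections are of the form $1+O(p^{-2})$, so after multiplication by the $O(p^{-1})$ density of the Chebotarev condition and summation over $p\leq N$, they produce only a bounded constant, which is absorbed into $M_\pm$. A similar argument controls the contribution of the exceptional classes introduced in~\eqref{eq:defrYpm} using properties (4), (5), (6) of Lemma~\ref{lemma:sandiwch}.
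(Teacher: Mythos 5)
The overall strategy — define $\mathbf{P}$ as a product of local measures coming from the equidistribution propositions, apply CRT to verify the product structure of \eqref{eq:probassumpver}, and use Lemmas~\ref{lemma:equiprimes} and~\ref{lemma:equidistvD} for \eqref{eq:musig} — matches the paper's proof. The split of the $\ell=2$ marginal into the no-Legendre piece $p\mid D_\pm^{(1)}$ (via Lemma~\ref{lemma:equiprimes}) and the Legendre-constrained piece $p\mid D_\pm^{(2)}$ (via Lemma~\ref{lemma:equidistvD}) correctly recovers $c_\pm=u_\pm^{(1)}+v_\pm^{(2)}$.

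However, the step ``Applying the same proposition with modulus $q^2$ to each residue class in $\prod_{p\mid q}S_p$'' has a gap in the case where $\delta=m=1$ but $\upsilon=\tau=1$ does not hold. There, $\cT_{\upsilon,\tau}$ contains pairs with $p\mid\gcd(a,b)$, and more generally the set $\{(a,b)\bmod p^2: p\mid\gcd(A(a,b),B(a,b))\}$ carries positive $\mathbf{P}_p$-mass and, by Lemma~\ref{lemma:sandiwch}(4) and the definition~\eqref{eq:defrYpm}, has the \emph{constant value} $\rY_p^\pm=\pm1$ — so for the matching $\kappa_p$ these classes lie in $S_p$. But Proposition~\ref{prop:nonhomprob} only applies to classes $(a_1,b_1)$ with $\gcd(a_1,b_1,q)=1$, so the equidistribution estimate cannot simply be summed over those exceptional members of $S_p$. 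The paper resolves this by an explicit Möbius inclusion-exclusion (splitting $q=q_0q_1$ according to whether $\kappa_p$ equals the exceptional constant, and expanding $\mathbf{1}[q_1\mid\gcd(A,B)]=\sum_{d\mid q_1}\mu(d)\mathbf{1}[\gcd(d,A,B)=1]$) to express the density of the exceptional event entirely through residue classes where the proposition does apply. Your write-up alludes to Lemma~\ref{lemma:sandiwch}(4)--(6) only in the discussion of \eqref{eq:musig}, not in the CRT argument for \eqref{eq:probassumpver}, so this computation is missing.

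Note by contrast that in the case $\upsilon=\tau=1$ your direct summation is legitimate: Lemma~\ref{lemma:sandiwch}(6) forces $\rY_p=0$ on every class with $p\mid\gcd(A(a,b),B(a,b))$, so for $\kappa_p\in\{\pm 1\}$ those classes are excluded from $S_p$ automatically, and Proposition~\ref{prop:easySequi} or Proposition~\ref{prop:ratio} applies to everything remaining. You should make this distinction explicit and carry out the inclusion-exclusion in the other case.
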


\begin{proof}
First consider the case when $\upsilon=\tau=1$. Apply Proposition~\ref{prop:ratio} if $\delta=1$, and Proposition~\ref{prop:easySequi} if $\delta=0$. There exists $\lambda(p), \epsilon>0$ such that for any $\alpha=[a:b]\in\PP^1(\Z/q\Z)$ with $\gcd(a,b,q)=\gcd(A(a,b),B(a,b),q)=1$, we have 
\begin{equation}\label{eq:firstcaseprob}
 \frac{\left|\cA^{\delta}_{1}(N, \alpha,q^2)\right|}{\left|\cA^{\delta}_{1}(N)\right|}
=
\frac{1}{|\PP^1(\Z/q^2\Z)|}\prod_{p\mid q}\lambda(p)+O(N^{-\epsilon})
\text{
and
}\lambda(p)=1+O\left(\frac{1}{p}\right).
\end{equation}
Note that $\rY_p$ factors through $\mathbb{P}^1(\Z/p^2\Z)$.
Take $\mathbf{P}$ to be a measure on $\prod_p(\Z/p^2\Z)^2$ such that $\mathbf{P}([a:b]\equiv \alpha,\ \gcd(a,b,p)=1)=\frac{1}{|\PP^1(\Z/p^2\Z)|}\lambda(p)$ given any $\alpha\in\mathbb{P}^1(\Z/p^2\Z)$. Note that we can freely fix the probability of elements $(a,b)\in\prod_p(\Z/p^2\Z)^2$ such that $p\mid \gcd(a,b)$.
Summing~\eqref{eq:firstcaseprob} over all $\alpha\in\mathbb{P}^1(\Z/q^2\Z)$ such that $(\rY_p(\alpha))_{p\mid q}=(\kappa_p)_{p\mid q}$, we see that the main term matches with the main term of~\eqref{eq:probassumpver}. The number of such $\alpha$ is trivially bounded by $q^2$, so the error term becomes $O(q^2N^{-\epsilon})$. Since Lemma~\ref{lemma:sandiwch} implies that $\rY_p(a,b)=0$ whenever $p\mid \gcd(A(a,b),B(a,b))$, the condition $p\nmid \gcd(A(a,b),B(a,b))$ does not affect the sum.

Now assume instead that $\upsilon=\tau=1$ does not hold, so $f$ and $g$ are coprime by the assumptions of Lemma~\ref{lemma:sandiwch}, and $\delta=m=1$. To verify \eqref{eq:probassumpver}, we apply Proposition~\ref{prop:nonhomprob}. It follows that, there exists $\lambda(p), \epsilon>0$ such that for any $(a,b)\in(\Z/q\Z)^2$ with $\gcd(a,b,q)=\gcd(q,\Lambda)=1$, we have
\begin{equation}\label{eq:concin}
 \frac{\left|\cA^1_{\upsilon}(N, (a,b),q^2)\right|}{\left|\cA^1_{\upsilon}(N)\right|}
=
\frac{1}{q^4}\prod_{p\mid q}\lambda(p)+O(N^{-\epsilon})
\text{
and
}\lambda(p)=1+O\left(\frac{1}{p}\right).
\end{equation}
Take $\mathbf{P}$ to be a measure on $(\Z/p^2\Z)^2$ such that $\mathbf{P}((a,b)\equiv (a_0,b_0))=\frac{1}{p^4}\lambda(p)$ given any $(a_0,b_0)\in(\Z/p^2\Z)^2$ such that $p\nmid \gcd(A(a_0,b_0),B(a_0,b_0))$. 
By Lemma~\ref{lemma:sandiwch}, we see that $\rY_p$ is constant on the set
 $\{(a,b)\in (\Z/p^2\Z)^2: p\mid \gcd(A(a,b),B(a,b)\}$ and $0$ when $p\mid \Lambda$. Since we only need to verify \eqref{eq:probassumpver} when $\kappa_p$ are all non-zero, we only need to consider the case when $q$ is coprime to $\Lambda$.
 Suppose $q=q_0q_1$ and let $a_0,b_0$ be integers such that $\gcd(q_0,A(a_0,b_0), B(a_0,b_0))=1$.
 We use \eqref{eq:concin} to evaluate
 \begin{align*}
& \frac{1}{\left|\cA^1_{\upsilon}(N)\right|} \left| \left\{(a,b)\in\cA^1_{\upsilon}(N):
 \begin{array}{l}
 (a,b)\equiv (a_0,b_0)\bmod q_0^2\\
 q_1\mid \gcd(A(a,b), B(a,b))\\
 \end{array}\right\}\right|\\
  &= \sum_{d\mid q_1}
  \frac{\mu(d)}{\left|\cA^1_{\upsilon}(N)\right|} \left| \left\{(a,b)\in\cA^1_{\upsilon}(N):
 \begin{array}{l}
 (a,b)\equiv (a_0,b_0)\bmod q_0^2\\
 \gcd(d,A(a,b), B(a,b))=1
 \end{array}\right\}\right|\\
 &=\sum_{d\mid q_1}
  \mu(d)\sum_{\substack{(a_1,b_1)\bmod q_0^2d^2\\
 (a_1,b_1)\equiv (a_0,b_0)\bmod q_0^2\\
 \gcd(d,A(a_1,b_1),B(a_1,b_1))=1}} \frac{|\cA^1_{\upsilon}(N, (a_1,b_1),q_0^2d^2)|}{\left|\cA^1_{\upsilon}(N)\right|}\\
 &=
\sum_{d\mid q_1}\mu(d)\sum_{\substack{(a_1,b_1)\bmod d\\
\gcd(d,A(a_1,b_1),B(a_1,b_1))=1}}\frac{1}{(q_0d)^4}\prod_{p\mid q_0d}\lambda(p)+O\left(2^{\omega(q_1)}q_1^4N^{-\epsilon}\right)\\
&=
\prod_{p\mid q_0}\frac{\lambda(p)}{p^4}\prod_{p\mid q_1}\left(1-\frac{\lambda(p)}{p^4}\sum_{\substack{(a_1,b_1)\bmod p\\
p\nmid \gcd(A(a_1,b_1),B(a_1,b_1))}}1\right) +O\left(2^{\omega(q_1)}q_1^4N^{-\epsilon}\right).
 \end{align*}
Summing over all $(a_0,b_0)\bmod q_0^2$ and $(q_0,q_1)$ such that $(\rY_p(a,b))_{p\mid q}=(\kappa_p)_{p\mid q}$ gives the required estimate \eqref{eq:probassumpver} on taking $\xi<\epsilon$ and noting that the estimate is trivial if $q_1\gg N$.

Finally to prove~\eqref{eq:musig}, note that by Lemma~\ref{lemma:sandiwch}, other than finitely many $p$,
\[\mathbf{P}(\rY_p=\pm 1)=\mathbf{P}(\rY^{\circ}_p=\pm 1)+O\left(\frac{1}{p^2}\right).\]
Lemma~\ref{lemma:equidistvD} provides us with
\begin{align*}
 \sum_{p\leq N}\mathbf{P}(\rY^{\circ}_p=\pm 1)&=
\sum_{p\leq N}\frac{\lambda(p)}{p^2}\left|\left\{(a,b)\in\F_p^2:\rY^{\circ}(a,b)=\pm 1\right\}\right|\\&=c_{\pm}\log \log N+M_{\pm}+O\left(\frac{1}{\log N}\right),
\end{align*}
 where $M_-$ and $M_-$ are constants. Combining with the previous estimate yields~\eqref{eq:musig}.
 \end{proof}

\subsection{Main theorem}
We are finally ready to prove some results on the distribution of the Tamagawa ratio by combining the work of the previous sections.
Let $f,g,f',g'\in\Z[t]$ and fix a prime $\ell$. 
Let 
\[
\cE:y^2=x^3+f(t)x+g(t),\text{ and }\quad\cE'=y^2=x^3+f'(t)x+g'(t)
\]
be elliptic curves over $\Q(t)$ with a degree $\ell$ isogeny $\phi:\cE\rightarrow\cE'$ over $\Q(t)$.
We may specialise to $\cE_t,\ t\in\Q$ such that $\Disc(\cE_t)\neq 0$ and take its quadratic twist $\cE_t^d$ by some $d\in\Q^{\times}$, so $\phi$ specialises to a degree $\ell$ isogeny $\cE_t^d\rightarrow (\cE'_t)^d$. 

Recall the multisets $\cF(N), \cG(N), \cF_0(N)$ defined in~\eqref{def:cFG} and~\eqref{def:cF0}.

\begin{definition}[admissible families]\label{def:add}
Let $f,g\in\Z[t]$.
Suppose that $\cE:y^2=x^3+f(t)x+g(t)$ admits a prime degree $\ell$ isogeny $\phi$ over $\Q(t)$. We say that $\cS=\cup_N \cS(N)$, a subset of either $\cF$ and $\cG$, is an admissible family with respect to $\phi$ if one of~\ref{Anormal},~\ref{Atwists},~\ref{Ah1},~\ref{Ah0} holds. 
\begin{enumerate}[label=(A\arabic*)]
\item \label{Anormal}
We require $f$ and $g$ to be coprime in $\Q[t]$, and 
\[
 \max\left\{\frac{1}{4}\deg f,\frac{1}{6}\deg g\right\}=\frac{m}{\tau}\text{ for some }\tau,m\in\Z_{\geq 1}\text{ and one of $m,\tau$ is $1$}.
\]
We require that $\cS(N)\subseteq \cF(N)$ and there exists $\xi>0$ such that
$|\cS(N)|=(1+O(N^{-\xi})) |\cF(N)|$ for all sufficiently large $N$.
\item \label{Atwists}
We require $f$ and $g$ to be coprime in $\Q[t]$, and 
\[
 \max\left\{\frac{1}{2}\deg f,\frac{1}{3}\deg g\right\}=\frac{1}{\tau}\text{ for some }\tau\in\Z_{\geq 1}.
\]
We require that $\cS(N)\subseteq \cG(N)$ and there exists $\xi>0$ such that
$|\cS(N)|=(1+O(N^{-\xi})) |\cG(N)|$ for all sufficiently large $N$.
 \item \label{Ah1}
 We require $f$ and $g$ to have no common real roots and
\[
 m\coloneqq\max\left\{\frac{1}{4}\deg f,\frac{1}{6}\deg g\right\}\in\Z_{\geq 1}.
\]
We require that $\ell\geq 5$.
Require that $s\coloneqq \gcd(f^3,g^2)$ satisfies
\begin{itemize}
 \item $4\leq \deg s<\frac{24m}{2+m}$;
 \item $s(t)=k(t)^r$, where $k(t)$ is a squarefree polynomial in $\Q[t]$ and $r\in\{2,3,4,6\}$;
 \item if $\mult_k f=2$ and $\mult_k g=3$, then $\mult_k(4f^3+27g^2)=6$.
\end{itemize}
We require that $\cS(N)\subseteq \cF(N)$ and there exists $\xi>0$ such that
$|\cS(N)|=(1+O(N^{-\xi})) |\cF(N)|$ for all sufficiently large $N$.
\item \label{Ah0}
 We require $f$ and $g$ to have no common real roots and
\[
 m\coloneqq\max\left\{\frac{1}{4}\deg f,\frac{1}{6}\deg g\right\}\in\Z_{\geq 1}.
\]
If $\ell\in\{2,3\}$, then also $f$ and $g$ are coprime. We require that $\cS(N)\subseteq \cF_0(N)$ and there exists $\xi>0$ such that
$|\cS(N)|=(1+O(N^{-\xi})) |\cF_0(N)|$ for all sufficiently large $N$.

\end{enumerate}
\end{definition}

The order of $|\cS(N)|$ and the parameters in~\eqref{eq:degfg} taken for each case are as follows.

 \begin{tabular}{llllll}
 \ref{Anormal} & $|\cS(N)|\asymp|\cA^1_1(N)|\asymp N^{\frac{\tau+1}{12m}}$, & $\delta=1$, & $\upsilon=1$, & $\varsigma=2m$;\\
 \ref{Atwists} & $|\cS(N)|\asymp|\cA^1_2(N)|\asymp N^{\frac{\tau+1}{6}}$, & $\delta=1$, & $\upsilon=2$, & $\varsigma=1$, & $m=1$; \\
 \ref{Ah1} & $|\cS(N)|\asymp|\cA^1_1(N)|\asymp N^{\frac{1}{6m}}$, & $\delta=1$, & $\upsilon=1$, & $\varsigma=2m$, & $\tau=1$;\\
 \ref{Ah0} & $|\cS(N)|\asymp|\cA^0_1(N)|\asymp N^{\frac{1}{6m}}$, & $\delta=0$, & $\upsilon=1$, & $\varsigma=2m$, & $\tau=1$. \\
 \end{tabular}

Recall the constants $c_\pm$ depending on the family $\cS$ defined in~\eqref{eq:cpmdef}.
Set
\begin{gather}
 (\mu,\sigma^2)\coloneqq
 \left(c_+-c_-,\ c_++c_-\right),\label{eq:musigdef}
\\
\rho(k)\coloneqq(\ell^k-1) c_+-(1-\ell^{-k})c_-.\label{eq:rhodef}
\end{gather}
\begin{theorem}\label{theorem:mainhom}
Suppose that $\cS$ is an admissible family with respect to $\phi$ as defined in Definition~\ref{def:add}. Let $\mu,\sigma,\rho$ be as in~\eqref{eq:musigdef} and~\eqref{eq:rhodef}. Given any $E\in \mathcal{S}$, let $r_{\phi}(E)$ be the logarithmic Selmer ratio defined in~\eqref{eq:selmerratiorankdef}. Let $k>0$ and $A>0$.
Then
\[
\left\{\frac{r_{\phi}(E)-\mu\log\log N}{\sigma\sqrt{\log\log N}}:E\in\cS(N)\right\}
\] converges in distribution to a standard Gaussian random variable as $N\rightarrow \infty$, and
\begin{equation} 
 \sum_{E\in\cS(N)}\ell^{k\cdot r_{\phi}(E)}\gg_k|\cS(N)|(\log N)^{\rho(k)}. \label{eq:avsel}
\end{equation}
There exists $\delta=\delta(A)>0$ such that
 \begin{equation} 
 \left|\left\{E\in\cS(N):r_{\phi}(E)\geq A\log\log N\right\}\right|\gg_A|\cS(N)|(\log N)^{-\delta}\label{eq:tailbd}.
\end{equation}
\end{theorem}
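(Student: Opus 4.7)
The approach is to reduce the three parts to Theorems~\ref{theorem:EKver},~\ref{theorem:NT}, and~\ref{theorem:tailbound} respectively, applied to the random variables $\rY_p^{\pm}$ constructed in Lemma~\ref{lemma:sandiwch}. By Lemma~\ref{lemma:cpsel},
\[
r_\phi(E_{a,b}) \;=\; \sum_{p\ne \ell}\log_\ell\frac{c_p(E_{a,b}')}{c_p(E_{a,b})} + O_\ell(1),
\]
and Lemma~\ref{lemma:sandiwch} yields $\sum_p \rY_p^-(a,b) + O(1) \leq r_\phi(E_{a,b}) \leq \sum_p \rY_p^+(a,b) + O(1)$ for all $(a,b) \in \cT_{\upsilon,\tau}$ with $\Disc(E_{a,b}) \ne 0$. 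Since admissibility forces $|\cS(N)| = (1+O(N^{-\xi}))|\cA^\delta_\upsilon(N)|$ for the appropriate $(\delta,\upsilon)$, it suffices to establish all three conclusions with $\cS(N)$ replaced by $\cA^\delta_\upsilon(N)$.

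For part~(1), I apply Theorem~\ref{theorem:EKver} to $\rf_p = \rY_p^{\pm}$, with $E(N) = N^{\xi}$ and $\kappa = 4$. The equidistribution hypothesis~\eqref{eq:probindest} is precisely~\eqref{eq:probassumpver}, while~\eqref{eq:musig} gives the mean $\mu(Q) = \mu \log\log Q + O(1)$ and variance $\sigma(Q)^2 = \sigma^2 \log\log Q + O(1)$ with $\mu,\sigma$ as in~\eqref{eq:musigdef}. Choosing $Q = \exp((\log N)^{1-1/\log\log\log N})$ makes $\log Q / \log N \to 0$ (so the theorem applies) and $\log\log Q / \log\log N \to 1$, so that the normalisation $\sigma(Q)$ is $(1+o(1))\sigma\sqrt{\log\log N}$ and the truncated sum $\bigl(\sum_{p\le Q}\rY_p^{\pm} - \mu(Q)\bigr)/\sigma(Q)$ converges to a standard Gaussian. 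A second moment argument, using~\eqref{eq:probassumpver} to estimate $\sum_{Q < p \leq N^C}\mathbf{E}_N[(\rY_p^{\pm})^2] \ll \log\log N - \log\log Q = o(\log\log N)$ (the upper cutoff $N^C$ coming from the polynomial support of $\rY_p^{\pm}$), shows the tail is negligible compared to $\sigma(Q)$. Because $\rY_p^{-} \le \rY_p^{+}$ pointwise and they disagree only on a $\mathbf{P}$-measure $O(1/p^2)$ set, both sandwich sums share the same Gaussian limit; Slutsky's theorem then transfers this to $(r_\phi - \mu\log\log N)/(\sigma\sqrt{\log\log N})$.

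For part~(2), I apply Theorem~\ref{theorem:NT} with $\alpha = \ell^k$ and $\rf_p = \rY_p^{-}$. The level of distribution~\eqref{eq:leveldist} follows from~\eqref{eq:probassumpver} by decomposing $q = q_+ q_-$; the bound~\eqref{eq:assumeNT0} and the partial-sum estimate~\eqref{eq:assumeNT1} on $\eta_\pm$ follow from the construction of $\rY_p^{\pm}$ together with~\eqref{eq:musig}; and~\eqref{eq:assumeNT2} holds because $\rY_p^{\pm}(a,b) \ne 0$ forces $p$ to divide $\gcd(A,B)(a,b)\cdot D_+(a,b)D_-(a,b)\cdot R(a,b)^2$, whose magnitude is $\ll N^C$ on $\cR_N$ by Lemma~\ref{lem:lenght}. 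The lower bound in Theorem~\ref{theorem:NT} applied to $(\rY_p^{-})$ and the pointwise inequality $\sum_p \rY_p^{-}(a,b) \le r_\phi(E_{a,b}) + O(1)$ yield the claimed $|\cS(N)|(\log N)^{\rho(k)}$ lower bound. Part~(3) is then immediate from Theorem~\ref{theorem:tailbound}, whose hypotheses are identical to those of Theorem~\ref{theorem:NT}.

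The principal obstacle lies in part~(1): Theorem~\ref{theorem:EKver} controls only the truncated sum $\sum_{p\le Q}\rY_p^{\pm}$ for $Q$ of subpolynomial growth in $N$, whereas $\rY_p^{\pm}$ may be nonzero for primes as large as $N^C$. Threading the needle requires a truncation parameter $Q$ chosen so that simultaneously $\log Q = o(\log N)$ (to apply the theorem's moment calculation) and $\log\log Q = (1-o(1))\log\log N$ (so that the tail variance $\asymp \log\log N - \log\log Q$ is $o(\log\log N)$ and the Slutsky step goes through). The tail estimate moreover has to be carried out under the empirical measure on $\cA^\delta_\upsilon(N)$ rather than under the model probability $\mathbf{P}$, which is where the $q^\kappa/E(N)$ error term in~\eqref{eq:probassumpver} must be exploited carefully.
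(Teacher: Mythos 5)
Your overall architecture is correct and matches the paper: you apply Theorem~\ref{theorem:EKver} for the central limit theorem, and Theorems~\ref{theorem:NT} and~\ref{theorem:tailbound} for the average and tail bounds, to the sandwich variables $\rY_p^{\pm}$ from Lemma~\ref{lemma:sandiwch}, verified by Lemma~\ref{lemma:sandwichinput}. The treatment of parts~(2) and~(3) is essentially right (modulo the small technical point that one should note $\ell^{r_\phi(E_{a,b})}\ll N^{\xi/2}$ when discarding the $O(N^{-\xi})$ proportion of exceptional curves in~\eqref{eq:avsel}, which the paper handles explicitly).

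However, your handling of the tail $\sum_{p>Q}\rY_p^{\pm}$ in part~(1) has a genuine gap. You choose $Q=\exp((\log N)^{1-1/\log\log\log N})$, which makes $\log\log N-\log\log Q=\frac{\log\log N}{\log\log\log N}$. This is $o(\log\log N)$, but it is \emph{not} $o(\sqrt{\log\log N})$. Your first-moment estimate $\sum_{Q<p\leq N^C}\EE_N[(\rY_p^\pm)^2]\ll\log\log N-\log\log Q$ therefore does not make the tail ``negligible compared to $\sigma(Q)\asymp\sqrt{\log\log N}$'': via Markov or Chebyshev one only gets that the tail sum is $O\!\left(\frac{\log\log N}{\log\log\log N}\right)$ in expectation, which dominates $\sqrt{\log\log N}$. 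Concretely, if $\mu=c_+-c_-\neq 0$, your centering $\mu(Q)$ differs from $\mu\log\log N$ by $\mu(\log\log N-\log\log Q)$, which is \emph{not} $o(\sqrt{\log\log N})$ with your choice of $Q$, so Slutsky's theorem does not apply. (A variance bound on the tail plus a re-centering by its mean could salvage this, but then one faces the obstacle you gloss over: the equidistribution estimate~\eqref{eq:probassumpver} only applies for moduli $q\ll N^{\xi/4}$, whereas your tail runs up to $p\leq N^C$.)

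The paper's fix is different and cleaner. It takes $Q=\exp((\log\log N)^{-1/3}\log N)$ and uses the \emph{deterministic pointwise} bound: since $\rY^{\pm}_p(a,b)\neq 0$ forces $p\mid D(a,b)$ and $D(a,b)\ll N^{O(1)}$, every $(a,b)\in\cA^\delta_\upsilon(N)$ has
\[
\left|\sum_{p>Q}\rY_p^{\pm}(a,b)\right|\leq\left|\{p>Q:\rY_p^{\pm}(a,b)\neq 0\}\right|\ll\frac{\log N}{\log Q}=(\log\log N)^{1/3}=o\!\left(\sqrt{\log\log N}\right).
\]
Simultaneously $\log\log N-\log\log Q=\tfrac{1}{3}\log\log\log N=o(\sqrt{\log\log N})$, so the centering discrepancy is also negligible. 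You already invoke essentially this pointwise bound to verify~\eqref{eq:assumeNT2} for part~(2); you should apply the same idea in part~(1) and replace your choice of $Q$ accordingly.
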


\begin{proof}
If $\cS$ is admissible under~\ref{Anormal} or~\ref{Ah1}, we work with $\cA^{1}_1$.
If $\cS$ is admissible under~\ref{Atwists}, we work with $\cA^{1}_2$.
If $\cS$ is admissible under~\ref{Ah0}, we work with $\cA^{0}_1$.
By Lemma~\ref{lemma:cpsel}, we have
\[
\ell^{r_{\phi}(E)}\asymp\prod_p \frac{c_p(E')}{c_p(E)}.
\]

To approximate the local Tamagawa ratios, take $(\rY^{\pm}_p)_p$ from Lemma~\ref{lemma:sandiwch}.
We will apply Theorem~\ref{theorem:EKver} to $\rY_p^{\pm}$ with the choice $Q=\exp((\log\log N)^{-\frac{1}{3}}\log N)$.
Lemma~\ref{lemma:sandwichinput} provides the estimate~\eqref{eq:probindest} required. This shows that 
\[
 \left\{\frac{\sum_{p\leq Q}\rY^{\pm}_p(a,b)-\mu(Q)}{\sigma(Q)}: (a,b)\in\cA^{\delta}_{\upsilon}(N)\right\}
 \]
converges in distribution to the standard Gaussian random variable as $N\rightarrow\infty$.
To compute $\mu(Q)$ and $\sigma(Q)$, we use~\eqref{eq:musig} to get
\[\mu(Q)=(c_+-c_-)\log\log Q+O(1)\quad \text{ and }\quad \sigma(Q)^2=(c_++c_-)\log\log Q+O(1).\]
Given $(a,b)\in\cA^{\delta}_{\upsilon}(N)$, since $\rY_p^{\pm}(a,b)\neq 0$ only if $p\mid D(a,b)$, and $D(a,b)\ll N^{O(1)}$, we have the bound
 \[
 |\{p>Q:\rY^{\pm}_p(a,b)\neq 0\}|\ll\frac{\log N}{\log Q}=(\log\log N)^{\frac{1}{3}}.\]
 Plugging in these estimates, we see that 
 \[\frac{\sum_{p}\rY^{+}_p(a,b)-(c_+-c_-)\log\log N}{\sqrt{(c_++c_-)\log\log N}}\quad\text{ and }\quad\frac{\sum_{p}\rY^{-}_p(a,b)-(c_+-c_-)\log\log N}{\sqrt{(c_++c_-)\log\log N}}\]
 both converge in distribution to the standard Gaussian.
 Now Lemma~\ref{lemma:sandiwch} tells us that 
 $r_{\phi}(E_{a,b})\leq \sum_p\rY_p^{+}(a,b)+O(1)$ and 
 $r_{\phi}(E_{a,b})\geq \sum_p\rY_p^{-}(a,b)+O(1)$ both hold for all $(a,b)\in\cT_{\upsilon,\tau}$ with $\Disc(E_{a,b})\neq 0$, so $(r_{\phi}(E_{a,b})-\mu\log\log N)/(\sigma\sqrt{\log\log N})$ also converges to the standard Gaussian.

Next we apply Theorem~\ref{theorem:NT} and
Theorem~\ref{theorem:tailbound} to $\rY_p^{-}$. To check that~\eqref{eq:leveldist} holds, Lemma~\ref{lemma:sandwichinput} implies that \eqref{eq:probassumpver} holds for some $\xi>0$, then for all $q\leq N^{\xi/8}$, the error term of \eqref{eq:probassumpver} can be trivially bounded by $O(N^{-\xi/2})$. This shows that \eqref{eq:leveldist} holds with $\xi/4$ in place of $\xi$.
Since $r_{\phi}(E_{a,b})\geq \sum_p\rY_p^{-}(a,b)+O(1)$, we conclude that 
\begin{equation*}
 \sum_{(a,b)\in\cA^{\delta}_{\upsilon}(N)}\ell^{k\cdot r_{\phi}(E_{a,b})}\gg|\cA^{\delta}_{\upsilon}(N)|(\log N)^{\rho(k)}, 
 \end{equation*}
 and 
\begin{equation*}
 \left|\left\{(a,b)\in\cA^{\delta}_{\upsilon}(N):r_{\phi}(E_{a,b})\geq c\log\log N\right\}\right|\gg|\cA^{\delta}_{\upsilon}(N)|(\log N)^{-\delta}
 \end{equation*}
for some positive constants $\delta$ and $A$. 

To transfer from $\cA^{\delta}_{\upsilon}(N)$ to the corresponding $\cF(N)$, or $\cG(N),\cF_0(N)$, it suffices to note that due to Lemma~\ref{lemma:removingsingular}, the proportion of $(a,b)\in \cA^{\delta}_{\upsilon}(N)$ such that $\Disc(E_{a,b})=0$ is $O(N^{-\epsilon})$ for some $\epsilon>0$. 
Since $\cS$ is admissible, its proportion is $1+O(N^{-\xi})$ inside the relevant $\cF(N)$, $\cG(N)$, or $\cF_0(N)$. Clearly this does not change the convergence in distribution, as well as~\eqref{eq:tailbd}, since the proportion of exceptions is $O(N^{-\xi})$.
As for~\eqref{eq:avsel}, observe that $r_{\phi}(E_{a,b})=o(\log N)$, and hence $\ell^{r_{\phi}(E_{a,b})}\ll N^{\xi/2}$, so dropping a subset of size $O(N^{-\xi})$ does not affect the order of magnitude of the lower bound.
\end{proof}

\section{Families of elliptic curves}\label{sec:fam}
The general strategy is to show that certain families are admissible under Definition~\ref{def:add}. Then use Velu's formula to compute the isogeny, and find $\mu, \sigma, \rho(1)$ from the expressions.

The following lemma shows that we may disregard the elliptic curves in $\cF$ and $\cG$ with $j$-invariant $0$ or $1728$, as long as $f$ and $g$ are both non-zero polynomials.
\begin{lemma}\label{lemma:cusps}
Assume that $f(t)$ and $g(t)$ are both non-zero polynomials. Then
\begin{gather}
 \left|\{E\in\cF(N):j(E)=0\text{ or }1728\}\right|\ll 1,\label{eq:exceptFj}\\
 \left| \{E\in\cG(N):j(E)=0\text{ or }1728\}\right|\ll N^{\frac{1}{6}},\label{eq:exceptGj}
\end{gather}
where the implied constants depend only on $f$ and $g$.
If $\cS$ is admissible, then $\{E\in \cS: j(E)\neq 0,1728\}$ is also admissible.
\end{lemma}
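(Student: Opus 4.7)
The strategy is to translate $j(E) \in \{0, 1728\}$ into vanishing conditions on $A$ and $B$ in the short Weierstrass model $y^2 = x^3 + Ax + B$, and thence on $f$ and $g$ after specialisation: $j = 0$ is equivalent to $A = 0$, i.e.\ $f(t) = 0$, while $j = 1728$ corresponds to $B = 0$, i.e.\ $g(t) = 0$; each is compatible with $\Disc \neq 0$ only when the other polynomial is non-vanishing at $t$.

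For $\cE_t \in \cF$, since both $f$ and $g$ are non-zero polynomials by hypothesis, the equation $f(t)g(t) = 0$ admits at most $\deg f + \deg g$ rational solutions, giving~\eqref{eq:exceptFj} with implied constant depending only on $f, g$. For $\cE_t^d \in \cG$, the $j$-invariant is twist-invariant, so only the same finitely many $t = t_0$ contribute. Fix such a $t_0$ with, say, $f(t_0) = 0$ and $g(t_0) \neq 0$ (the symmetric case is analogous), so $\cE_{t_0}^d : y^2 = x^3 + d^3 g(t_0)$. The plan is to show $H(\cE_{t_0}^d) \asymp d^6$ uniformly over squarefree $d \neq 0$. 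By definition, $m \coloneqq m(0, d^3 g(t_0))$ is the largest integer with $m^{12} \mid d^6 g(t_0)^2$, and at every prime $p$ this forces $v_p(m) \leq v_p(d)/2 + v_p(g(t_0))/6$; since $d$ is squarefree, $v_p(d) \leq 1$, so $m$ is bounded solely in terms of $g(t_0)$. Hence $H(\cE_{t_0}^d) \leq N$ forces $d \ll N^{1/6}$, and summing over the finitely many $t_0$ yields~\eqref{eq:exceptGj}.

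For the final claim, let $\cS' \coloneqq \{E \in \cS : j(E) \neq 0, 1728\}$. By the first two parts, $|\cS(N) \setminus \cS'(N)|$ is $O(1)$ whenever $\cS \subseteq \cF$, covering cases~\ref{Anormal},~\ref{Ah1},~\ref{Ah0} (since $\cF_0 \subseteq \cF$), and is $O(N^{1/6})$ in case~\ref{Atwists}. In each admissible case, the comparison size $|\cF(N)|$, $|\cG(N)|$, or $|\cF_0(N)|$ grows as a positive power of $N$ whose exponent strictly exceeds that of the removed contribution: $1/(6m)$ or $(\tau+1)/(12m)$ versus $N^0$ in the $\cF$-cases, and $(\tau+1)/6$ versus $N^{1/6}$ in the $\cG$-case (using $\tau \geq 1$). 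Combining with admissibility of $\cS$ gives $|\cS'(N)| = (1 + O(N^{-\xi'})) |\cF(N)|$ (or the relevant analogue) for some $\xi' > 0$, so $\cS'$ is admissible. The only mildly delicate step is the uniform-in-$d$ bound on $m(0, d^3 g(t_0))$ in~\eqref{eq:exceptGj}; everything else is routine bookkeeping.
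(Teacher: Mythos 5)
Your proof is correct and follows essentially the same route as the paper: translate $j\in\{0,1728\}$ into $f(t)g(t)=0$, note finitely many such $t$ for $\cF$, and for $\cG$ bound the number of twists of each fixed $\cE_{t_0}$ with height at most $N$ by $O(N^{1/6})$, then use $\tau\geq 1$ (and positive exponent growth in the $\cF$-cases) to preserve admissibility. The only difference is that the paper simply cites the $O(N^{1/6})$ twist-count as a standard fact, whereas you derive it directly from the definition of $m(A,B)$; that derivation is sound and closes the one step the paper leaves implicit.
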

\begin{proof}
 If $j(\cE_t)=0$ or $1728$, then $t$ must satisfy either $f(t)=0$ or $g(t)=0$.
 Since $f(t)$ and $g(t)$ are both non-zero polynomials, there are only finitely many $t$ that is a root of either $f$ or $g$.
 Therefore~\eqref{eq:exceptFj} is immediate.
As for~\eqref{eq:exceptGj}, note that given $\cE_t$, the number of quadratic twists up to height $N$ is $O(N^{\frac{1}{6}})$. This shows~\eqref{eq:exceptGj}.

If $\cS$ is admissible of type~\ref{Anormal},~\ref{Ah0}, or~\ref{Ah1}, then the order of magnitude of $|\cS(N)|$ is a positive power of $N$, so~\eqref{eq:exceptFj} shows that $\{E\in \cS: j(E)\neq 0,1728\}$ is also
admissible.
If $\cS$ is admissible of type~\ref{Atwists}, then since $|\cS(N)|\asymp N^{\frac{\tau+1}{6}}$, the bound~\eqref{eq:exceptGj} is $O(N^{-\frac{\tau}{6}})|\cS(N)|$, so again $\{E\in \cS: j(E)\neq 0,1728\}$ is
admissible.
\end{proof}

\subsection{Torsion subgroups with exponent at least $4$}
By Mazur's Theorem~\cite[Theorem~8]{mazur}, the torsion subgroup $E_{\tors}(\Q)$ of $E(\Q)$ is isomorphic to one of $\Z/n\Z$, where $n=1,\dots,10,12$, and $\Z/2\Z\times \Z/2n\Z$, where $n=1,2,3,4$.

\begin{lemma}\label{lemma:controlfibretorsion}
Let $T=\Z/m\Z\times \Z/mn\Z$, where \[(m,n)\in\{(1,4),(1,5),\dots, (1,10),(1,12),(2,2),(2,3),(2,4)\}.\] Set $f(t)=-27 \alpha_T(t,1)$ and $g(t)=-54\beta_T(t,1)$, where $\alpha_T$ and $\beta_T$ are as given in~\cite[Tables 4 and 5]{barrios2022minimal}.
Other than finitely many $E\in \cF$, the size of the fibre of any $E\in \cF$ under the map
 \[\cF\rightarrow \cF/\cong_{\Q}\]
 equals to the number of embeddings $T\hookrightarrow E_{\tors}(\Q)$.
\end{lemma}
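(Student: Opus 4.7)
The plan is to unpack the moduli content of Barrios's parametrisation and to set up an explicit bijection between parameters $t$ and embeddings of $T$ into the generic fibre.

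First, one pins down the canonical $T$-structure carried by each $\cE_t$. The polynomials $\alpha_T,\beta_T$ from~\cite[Tables 4 and 5]{barrios2022minimal} arise by starting from a Tate-type normal form in which generators of $T$ sit at prescribed Weierstrass coordinates (e.g.\ $(0,0)$ in the cyclic case), and then converting to the short Weierstrass model $y^2=x^3+f(t)x+g(t)$. Tracking this change of variables yields a canonical embedding $\iota_t\colon T\hookrightarrow\cE_t(\Q)_{\tors}$ for each $t$. The assignment $t\mapsto(\cE_t,\iota_t)$ identifies $\Q$ (outside a finite set) with $\Q$-isomorphism classes of pairs consisting of a Weierstrass model $y^2=x^3+fx+g$ together with an embedding of $T$. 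This rigidity is the uniqueness of the Tate normal form when $T$ is cyclic of order at least $4$, and has an analogue for the non-cyclic cases $T=\Z/2\Z\times\Z/2n\Z$.

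Next, the exceptional set is identified as follows. By Mazur's theorem only finitely many abelian groups $T'$ properly contain $T$, and for each such $T'$ the locus $\{t\in\Q : T'\hookrightarrow\cE_t(\Q)\}$ is the image of a non-constant map $Y_1(T')\to Y_1(T)\simeq\PP^1$, hence is finite. Adjoining the (finite) zero set of $\Disc(\cE_t)$ and the (finite) fibre above $j\in\{0,1728\}$ (where $\cE_t$ can acquire extra geometric automorphisms) yields a finite exceptional set $S_0\subset\Q$.

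For $t\notin S_0$ one has $\cE_{t,\tors}(\Q)=T$ exactly and $\Aut(\cE_t)=\{\pm 1\}$. Since $T$ has exponent at least $4$, the involution $-1$ sends any embedding $\iota\colon T\hookrightarrow\cE_t(\Q)$ to the distinct embedding $-\iota$, so $\Aut(\cE_t,\iota_t)=\{1\}$. For any $\Q$-isomorphism $\phi\colon\cE_t\to E$ the composition $\phi\circ\iota_t$ is an embedding $T\hookrightarrow E(\Q)$, and running Barrios's algorithm backwards shows that every embedding $T\hookrightarrow E(\Q)$ arises from a unique $t$ in this way. Hence the fibre $\{t:\cE_t\cong_{\Q}E\}$ is in bijection with the set of embeddings $T\hookrightarrow E(\Q)$, proving the claim.

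The main obstacle will be verifying the rigidity claim of the first step uniformly across all cases in the list, especially for the non-cyclic groups $T=\Z/2\Z\times\Z/2n\Z$ where one must simultaneously prescribe two independent torsion points in the normal form. This is implicit in the case-by-case construction of~\cite[Tables 4 and 5]{barrios2022minimal} but requires inspection to confirm trivial generic stabilisers.
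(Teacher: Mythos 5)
Your proof follows the paper's moduli-theoretic approach, but there is a genuine gap in the construction of the exceptional set $S_0$. The claim that $\{t\in\Q : T'\hookrightarrow\cE_t(\Q)\}$ is finite because it is ``the image of a non-constant map $Y_1(T')\to Y_1(T)\simeq\PP^1$'' fails precisely when $T'$ lies in Mazur's list --- which is the only case that matters. For example, with $T=\Z/4\Z$ and $T'=\Z/8\Z$, the curve $X_1(T')$ has genus $0$ with infinitely many rational points, so the image of $Y_1(T')(\Q)$ in $Y_1(T)(\Q)$ is infinite. Hence your $S_0$ is infinite and the reduction to the case $\cE_{t,\tors}(\Q)=T$ does not prove ``for all but finitely many $E$''. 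The paper avoids this by not imposing the exact-torsion condition at all: it identifies the fibre for every non-cuspidal $t$ directly with the set of $\Q$-isomorphism classes of level structures on $E$, and the conclusion then holds with the varying quantity $\#\{T\hookrightarrow E_{\tors}(\Q)\}$ on the right-hand side. Your final bijection step only needs $j(\cE_t)\neq 0,1728$ (a genuinely finite exclusion), so the fix is simply to drop the extra-torsion discussion rather than try to bound it.

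Separately, you should be aware that both your closing bijection and the paper's phrase ``the number of allowable $P,Q$'' conflate the set of embeddings $T\hookrightarrow E_{\tors}(\Q)$ with the set of $\Q$-isomorphism classes of pairs $(E,\iota)$: since $\iota$ and $-\iota$ give isomorphic pairs via $-1\in\Aut(E)$ and $T$ has exponent $\geq 4$ here, the fibre is in bijection with embeddings modulo $\pm 1$, i.e.\ has size $\tfrac12\#\{\text{embeddings}\}$. For instance, when $T=\Z/4\Z$ the $j$-invariant $j(1)=33^3/17$ has exactly two rational preimages under $j(t)=(16t^2+16t+1)^3/(t^4(16t+1))$, namely $t=1$ and $t=-1/17$, but $\cE_{-1/17}$ is the quadratic twist of $\cE_1$ by $17$, so the fibre over $[\cE_1]$ has size $1$ rather than $\phi(4)=2$. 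This factor of two is harmless for the paper's applications (only uniform boundedness of fibre sizes is used), but the literal equality in the lemma does not hold, and your argument reproduces that imprecision.
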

\begin{proof}
The non-cuspidal rational points of the modular curve $X_{1}(m,mn)$ parametrise the $\Q$-isomorphism classes of triples $(E,P,Q)$, where $E$ is an elliptic curve, and $P,Q\in E(\Q)$ are independent points of orders $m$ and $mn$. See for example~\cite[Section C.13]{arithmetic}. 

We exclude finitely many $E\in \cF$ corresponding to the cuspidal points of $X_{1}(m,mn)$. As described in~\cite[Section 2.3]{barrios2022minimal} (also~\cite[Table~3]{kubert1976universal}), the modular parametrisation results in a single parameter $t\in\PP^1(\Q)$, with the universal elliptic curve given by the equation $y^2=x^3+f(t)x+g(t)$ (\cite[Lemma~2.9]{barrios2022minimal}). 

Note that the set $\cF$ does not contain any degenerations as we have imposed the discriminant of the elliptic curve to be non-zero. Given any $\Q$-isomorphism class of elliptic curves in $\cF$, the number of allowable $P,Q$ is exactly the number of embeddings $T\hookrightarrow E_{\tors}(\Q)$. 
\end{proof}

We describe the general strategy for a family $\cS$ of all elliptic curves with given torsion subgroup $T$, when $T$ has exponent at least $4$. 
Take $f,g$ as in Lemma~\ref{lemma:controlfibretorsion}. Take $\ell$ to be a prime that divides the order of $T$, so $\cE(\Q(t))$ has a point of order $\ell$. Fix such a point, then take $\phi:\cE\rightarrow \cE'$ to be the isogeny with kernel generated by this point. 
We can readily check that $f,g$ satisfy the conditions in~\ref{Anormal}.
By~\cite[Theorem 1.2]{harron2017counting}, the number of elliptic curves in $\cS(N)$ with a torsion subgroup strictly containing $T$ is bounded by $O(N^{-\xi})$ for some $\xi>0$, so $\cS$ is admissible with respect to $\phi$ under~\ref{Anormal}.
Lemma~\ref{lemma:controlfibretorsion} shows that the same result holds when we look at $\Q$-isomorphism classes.

We first use Velu's formula~\cite{velu1971isogenies} to compute a model for $\cE'$, as well as the isogeny $\phi$. This information then allows us to compute the constants $\mu$, $\sigma$, and $\rho(1)$ as needed. The constants obtained are tabulated in Table~\ref{table:oddprime}, Table~\ref{table:twounique}, and Table~\ref{table:twochoice}. 

\subsubsection{Torsion subgroup $\Z/5\Z$}
As an example, we lay out the computations for $T=\Z/5\Z$ here. We obtain from~\cite[Tables 4 and 5]{barrios2022minimal} 
\begin{align*}
 f(t)&=-27 \alpha_T(t,1)=-27t^4 - 324t^3 - 378t^2 + 324t - 27,\\
 g(t)&=-54\beta_T(t,1)=54(t^4 + 18t^3 + 74t^2 - 18t + 1)(t^2 + 1). 
\end{align*}
In this case $(m,\tau)=(1,1)$. We obtain from~\eqref{eq:defAB}, 
 \begin{align*}
 A(a,b)&=-27 (a^4 + 12 a^3 b + 14 a^2 b^2 - 12 a b^3 + b^4),\\
 B(a,b)&=54(a^4 + 18a^3b + 74a^2b^2 - 18ab^3 + b^4)(a^2 + b^2). 
\end{align*}
From which we can compute 
 \[
 \Delta(a,b)=4A(x,y)^3+27B(x,y)^2=2^8\cdot 3^{12}(a^2 + 11ab - a)a^5b^5.
 \]
 The four $5$-torsion points of $E_{a,b}$ have coordinates $(3(a^2+6ab+b^2),\pm 108ab^2)$ and $(3(a^2-6ab+b^2),\pm 108a^2b)$. Applying Velu's formula, $E_{a,b}$ is $5$-isogenous to an elliptic curve $E'_{a,b}$ with
 \begin{align*}
 A'(a,b)&=-27 (a^4 - 228 a^3 b + 494 a^2 b^2 + 228 a b^3 + b^4),\\
 B'(a,b)&=54(a^4 + 522a^3b - 10006a^2b^2 - 522ab^3 + b^4)(a^2 + b^2),\\
 \Delta'(a,b)&=2^8\cdot 3^{12}(a^2 + 11ab - b^2)^5ab.
\end{align*}
 Therefore we can take $D_+(a,b)=a^2 + 11ab - b^2$ and $D_-(a,b)=ab$, which gives $u_+=1$ and $u_-=2$. We have $6B(1,0)=6B(0,1)=324=18^2$, so $v_-=2$. The roots of $D_+(t,1)$ are $t_{\pm}=(-11\pm5\sqrt{5})/2$ and \[
 6g(t_\pm)=2^2\cdot 3^4\cdot 5^7(\pm 682 \sqrt{5} - 1525)
 \]
 is not a square in $\Q(\sqrt{5})$, thus $v_+=1/2$.
 This gives $\mu=-3/2$ and $\sigma^2=5/2$.
 
\subsubsection{Torsion subgroup $\Z/4\Z$}
The family of elliptic curves with torsion subgroup $\Z/4\Z$ satisfies~\ref{Anormal} with 
 \[
 f(t)=-27(16t^2+16t+1)
 \quad \text{ and }\quad
 g(t)=-54(8t+1)(8t^2-16t-1).
 \]
 Let $(m,\tau)=(1,2)$, so
 \[
 A(a,b)=-27(16a^2+16ab^2+b^4)\quad \text{ and }\quad
 B(a,b)=-54(8a+b^2)(8a^2-16ab^2-b^4).
 \]
 See also~\cite[Section~3.1]{harron2017counting}. 
 We have
 \[
 \Delta(a,b)=-2^8\cdot 3^{12}a^4b^2(16a+b^2),
 \quad \text{ and }\quad
 \Delta'(a,b)=-2^8\cdot 3^{12}a^2b^4(16a+b^2)^2,
 \]
 so $u_+^{(1)}=1$, $u_+^{(2)}=1$, $u_-^{(1)}=0$, $u_-^{(2)}=1$, $v_+^{(2)}=1/2$, $v_-^{(2)}=1$. Then $\mu=1/2$ and $\sigma^2=5/2$.

\subsection{Torsion subgroups with exponent less than $4$}
\subsubsection{Torsion subgroup $\Z/2\Z$}
We will show that the family of elliptic curves with torsion subgroup $\Z/2\Z$ is of type~\ref{Atwists}.
Take
 \[
 f(t)=t \quad \text{ and }\quad
 g(t)=t+1.
 \]
 Clearly $\cE:y^2=x^3+tx+(t+1)$ has a $2$-torsion point given by $(-1,0)$.
Take $\tau=2$, so
 \[
 A(a,b)=a \quad \text{ and }\quad B(a,b)=b^3+ab.
 \]
 By~\cite[Lemma~5.1]{harron2017counting}, there is a one-to-one correspondence between $\{(E/\Q, \Z/2\Z\hookrightarrow E^{\tors}(\Q))\}/\cong_\Q$ and curves $\{(E_{a,b} : (a,b)\in\cT_{2,2},\ \Delta(a,b)\neq 0\}$. By~\cite[Theorem 1.2]{harron2017counting}, the number of elliptic curves with a torsion subgroup strictly containing $\Z/2\Z$ up to height $N$ is bounded by $O(N^{-\xi})$ for some $\xi>0$, so the family of elliptic curves with torsion subgroup $\Z/2\Z$ is admissible of type~\ref{Atwists} with respect to the $2$-isogeny with kernel generated by the $2$-torsion point.

 We have
 \[
 \Delta(a,b)=(a + 3 b^2)^2 (4 a + 3 b^2),
 \]
 and, by Velu's formula (notice $(-b,0)$ is a $2$-torsion point), $E_{a,b}$ is $2$-isogenous to an elliptic curve $E'_{a,b}$ with
 \[
 \Delta'(a,b)=-16(a + 3 b^2) (4 a + 3 b^2)^2.
 \]
 We may take $D_+(a,b)=4a + 3 b^2$ and $D_-(a,b)=a + 3b^2$, so $u_+^{(1)}=u_-^{(1)}=1$ and $u_+^{(2)}=u_-^{(2)}=0$, and hence $\mu=0$ and $\sigma^2=2$.

\subsubsection{Torsion subgroup $\Z/2\Z\times\Z/2\Z$}
To show that the family of elliptic curves with torsion subgroup $\Z/2\Z\times\Z/2\Z$ is of type~\ref{Atwists}, take
 \[
 f(t)=-27(t^2-t+1) \quad \text{ and }\quad
 g(t)=-27(2t^3-3t^2-3t+2).
 \]
 \begin{lemma}\label{lemma:full}
Take the above choice of $f$ and $g$.
If $E$ is an elliptic curve such that $\Z/2\Z\times\Z/2\Z\hookrightarrow E_{\tors}(\Q)$ and $j(E)\neq 0,1728$, then $E$ is $\Q$-isomorphic to an element in $\cG$.
The size of the fibre of any $E\in\cG$ with $j(E)\neq 0,1728$ under the map $\cG\to\cG/\cong_{\Q}$ 
is $6$.
\end{lemma}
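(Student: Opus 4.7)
The plan is to make the parametrization explicit by factoring $x^3 + f(t)x + g(t)$, invert it given any $E$ with full rational $2$-torsion, and then count the fibre by enumerating the six orderings of the $2$-torsion roots.

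First I would verify that $g(t) = -27(t+1)(2t-1)(t-2)$ and, by expansion, the factorization $x^3 + f(t)x + g(t) = (x + 3(t+1))(x - 3(2t-1))(x + 3(t-2))$ in $\Q(t)[x]$. Thus the three rational $2$-torsion $x$-coordinates of $\cE_t$ are $-3(t+1)$, $3(2t-1)$, $-3(t-2)$, summing to zero. Writing the quadratic twist $\cE_t^d$ in short Weierstrass form $y^2 = x^3 + d^2 f(t) x + d^3 g(t)$, its $2$-torsion $x$-coordinates become $-3d(t+1)$, $3d(2t-1)$, $-3d(t-2)$.

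Next I would prove surjectivity. Any such $E/\Q$ admits a model $y^2 = (x-e_1)(x-e_2)(x-e_3)$ with distinct $e_i \in \Q$ and $e_1 + e_2 + e_3 = 0$; the assumption $j(E) \neq 1728$ forces each $e_i \neq 0$ (since the short Weierstrass $B$-coefficient equals $-e_1 e_2 e_3$). Fixing the ordering $(e_1, e_2, e_3)$, I would set $t \coloneqq (2e_1+e_3)/(e_1-e_3) \in \Q$ and $d_0 \coloneqq -(e_1-e_3)/9 \in \Q^\times$; using $e_2 = -e_1 - e_3$, a direct calculation yields $(e_1, e_2, e_3) = (-3d_0(t+1), 3d_0(2t-1), -3d_0(t-2))$. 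Writing $d_0 = u^2 d$ with $d \in \Z$ squarefree and $u \in \Q^\times$, the substitution $(x, y) \mapsto (u^{-2}x, u^{-3}y)$ exhibits a $\Q$-isomorphism $E \cong \cE_t^d$.

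For the fibre count, each ordering $\sigma \in S_3$ of $\{e_1, e_2, e_3\}$ produces a pair $(t_\sigma, d_\sigma) \in \Q \times \Z_{\mathrm{sqf}}$ by the same formulas, after reducing the raw $d_0$ to its squarefree kernel. Since $t_\sigma = (2e_{\sigma(1)} + e_{\sigma(3)})/(e_{\sigma(1)} - e_{\sigma(3)})$ depends only on the ordered pair $(e_{\sigma(1)}, e_{\sigma(3)})$, the relation $t_\sigma = t_{\sigma'}$ simplifies to $e_{\sigma(1)} e_{\sigma'(3)} = e_{\sigma(3)} e_{\sigma'(1)}$. Running through the five non-identity permutations relative to a fixed reference ordering, each resulting identity forces one of three excluded outcomes: a coincidence $e_i = e_j$ (ruled out by non-singularity); a quadratic $e_i^2 + e_i e_j + e_j^2 = 0$ with no nonzero real solutions; or a vanishing $e_i = 0$ (ruled out by $j(E) \neq 1728$). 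Hence the six orderings produce six distinct values of $t_\sigma$, and therefore six distinct pairs in the fibre. The main obstacle will be this final case analysis: verifying exhaustiveness and matching each degenerate identification cleanly to one of the excluded $j$-values rather than to some unrelated constraint. (The hypothesis $j(E) \neq 0$ is in fact automatic for curves with full rational $2$-torsion, since $e_1+e_2+e_3 = 0$ together with $e_1 e_2 + e_2 e_3 + e_1 e_3 = 0$ forces $e_1 = e_2 = e_3 = 0$ over $\R$.)
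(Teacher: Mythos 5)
Your proof is correct in all its verified claims and takes a genuinely different route from the paper's. The paper passes through the $j$-invariant: it cites Silverman's result that the Legendre-style map $t\mapsto j(\cE_t)=2^8(t^2-t+1)^3/(t^2(t-1)^2)$ is six-to-one away from $j\in\{0,1728\}$, and that the six preimages $\{t, 1/t, 1-t, 1/(1-t), (t-1)/t, t/(t-1)\}$ are all quadratic twists of one another, then finishes by noting the squarefree twist parameter $d$ is determined once $t$ is fixed. You instead factor $x^3+f(t)x+g(t)=(x+3(t+1))(x-3(2t-1))(x+3(t-2))$, invert the parametrisation explicitly via $t=(2e_1+e_3)/(e_1-e_3)$, $d_0=-(e_1-e_3)/9$, and count the fibre by enumerating the six orderings of the roots; your case analysis (the five non-identity permutations leading to $e_i=e_j$, to $e_1^2+e_1e_3+e_3^2=0$, or to some $e_i=0$) is correct, and the $j\neq 0$ redundancy observation is also correct since $\sum e_i = \sum_{i<j} e_ie_j = 0$ forces $\sum e_i^2 = 0$. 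Your version is more self-contained — no appeal to the cited proposition — at the cost of the computation; the paper's version is shorter but leans on a classical result.

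One small gap: your enumeration establishes that the six orderings give six \emph{distinct} pairs $(t_\sigma,d_\sigma)$ in the fibre, hence fibre size $\geq 6$, but the lemma asserts equality and you never show that every pair $(t,d)$ with $\cE_t^d\cong E$ is of the form $(t_\sigma,d_\sigma)$. This direction does follow from your own setup: a short-Weierstrass $\Q$-isomorphism $\cE_t^d\cong E$ is $(x,y)\mapsto(u^2x,u^3y)$ for some $u\in\Q^\times$, so the triple $\left(-3du^2(t+1),\,3du^2(2t-1),\,-3du^2(t-2)\right)$ must equal $(e_{\sigma(1)},e_{\sigma(2)},e_{\sigma(3)})$ for some $\sigma\in S_3$; by the uniqueness of your inversion, this forces $t=t_\sigma$ and $du^2=d_{0,\sigma}$, so $d=d_\sigma$ after reducing to the squarefree kernel. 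You should state this explicitly to close the argument.
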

\begin{proof}
If $E$ is a rational elliptic curve with $E(\Q)[2]\cong\Z/2\Z\times \Z/2\Z$, then it has a Weierstrass model of the form $y^2=x(x-a)(x-b)$ for $a,b\in \Z$. This is a quadratic twist by $B$ of the curve $y^2=x(x-t)(x-1)$, where $t=a/b$. Employing the change of variables $(x,y)\mapsto ((x+3t+3)/9,y/27)$, we obtain $\cE_t:y^2=x^3+f(t) x+g(t)$, which is an element of $\cG$.
It is straightforward to check that
\begin{equation}\label{eq:Elambda}
 \cE_\lambda,\qquad
\lambda\in\left\{t,\ \frac{1}{t},\ 1-t,\ \frac{1}{1-t},\ \frac{t-1}{t},\ \frac{t}{t-1}\right\}
\end{equation}
are all quadratic twists of $\cE_t$.
By~\cite[Proposition~III.1.7]{arithmetic}, in particular the proof of part (3), whenever $j\neq 0$ and $1728$, the six possible $\lambda$ in~\eqref{eq:Elambda} are distinct, and the map $t\mapsto j(\cE_{t})=2^8(t^2-t+1)^3/(t^2(t-1)^2)$ is six-to-one over $\overline{\Q}$. Therefore~\eqref{eq:Elambda} gives precisely the set of $\lambda$ such that $\cE_{\lambda}$ and $\cE_t$ are $\overline{\Q}$-isomorphic. By definition of $\cG$, after fixing $t$, each twist $\cE_t^d$ is counted exactly once since $d$ runs over all squarefree integers.
\end{proof}
 Then $\cE:y^2=x^3+ f(t)x+g(t)$ has three $2$-torsion points $(3(2t-1),0)$, $(3(2-t),0)$ and $(-3(t+1),0)$.

 Take $\tau=1$, so
 \[
 A(a,b)=-27(a^2-ab+b^2) \quad \text{ and }\quad
 B(a,b)=-27(2a^3-3a^2b-3ab^2+2b^3).
 \] We have
 \[
 \Delta(a,b)=-a^2b^2(a-b)^2
 \]
 and $(-3(a+b),0)$ is a $2$-torsion point of $E_{a,b}$ coming from $(-3(t+1),0)$ on $\cE$. By Velu's formula, the isogenous curve $E'_{a,b}$ gives
 \[
 \Delta'(a,b)=2^8ab(a-b)^4.
 \]
 Thus, $D_+^{(1)}(a,b)=1$, $D_+^{(2)}=a-b$, $D_-^{(1)}(a,b)=ab$, and $D_-^{(2)}=1$. So, $u_+^{(1)}=0$, $u_-^{(1)}=2$, $v_+^{(2)}=1$, and $v_-^{(2)}=0$, with $\mu=-1$ and $\sigma^2=3$.

 We could instead pick the $2$-isogeny with kernel coming from the other two $2$-torsion points of $E_{a,b}$. We collect the results in Table~\ref{table:twochoice}.
 
\subsubsection{Torsion subgroup $\Z/3\Z$}
The family of elliptic curves with torsion subgroup $\Z/3\Z$ falls under~\ref{Anormal}.
Take \[
 f(t)=3(2t+9) \quad \text{ and }\quad
 g(t)=t^2-27.
 \]
 Set $(m,\tau)=(1,3)$, so
 \[
 A(a,b)=3(2ab+9b^4) \quad \text{ and }\quad
 B(a,b)=a^2-27b^6.
 \]
 By~\cite[Lemma~5.1]{harron2017counting}, a rational elliptic curve admits a rational $3$-torsion point if and only it is $\Q$-isomorphic to $E_{a,b}$ for some $(a,b)\in \cT_{1,3}$. 
 From~\cite[Section~3]{harron2017counting}, we know that other than the exceptional curves, which form a subset of size $O(N^{-\frac{1}{12}})$ in $\cF(N)$, each $\Q$-isomorphism class appears exactly once in the parametrisation.
 We have
 \[
 \Delta(a,b)=27 (a + 5 b^3) (a + 9 b^3)^3
 \]
 and by Velu's formula $E_{a,b}$ is $3$-isogenous to an elliptic curve $E'_{a,b}$ with
 \[
 \Delta'(a,b)=3^9 (a + 5 b^3)^3 (a + 9 b^3).
 \]
 Therefore we may take $D_+(a,b)=a + 5 b^3$ and $D_-(a,b)=a + 9 b^3$, thus $u_+=u_-=1$. Notice that $6g(-9)$ is a rational square and $6g(-5)$ is not, so $v_+=1/2$ and $v_-=1$. This shows that $\mu=-1/2$ and $\sigma^2=3/2$.

 \begin{table}[!ht]
\centering
\begin{tabular}{cccccccccc}
\toprule
$T$ &$\ell$& $\deg \Delta$ & $u_+$ & $u_-$ & $v_+$ & $v_-$&$\mu$ &$\sigma^2$ & $\rho(1)$ \\
\midrule
$\Z/3\Z$ & $3$& $12$ & $1$ & $1$ & $\frac 12$ & $1$ &$-\frac 12$ & $\frac 32$& $\frac{1}{3}$\\
$\Z/5\Z$& $5$ & $ 12 $ & $ 1 $ & $ 2 $ & $ \frac12 $ & $ 2 $ & $ -\frac 32 $ & $ \frac52 $& $\frac25$ \\
$\Z/6\Z$ & $3$ & $ 12 $ & $ 2 $ & $ 2 $ & $ 1 $ & $ \frac32 $ & $ -\frac12 $ & $ \frac52 $ &$1$\\
$\Z/7\Z$ & $7$ & $ 24 $ & $ 1 $ & $ 3 $ & $ \frac12 $ & $ 3 $ & $ -\frac52 $ & $ \frac72 $& $\frac37$\\
$\Z/9\Z$ & $3$ & $ 36 $ & $ 2 $ & $ 3 $ & $ \frac32 $ & $ 3 $ & $ -\frac{3}2 $ & $ \frac92 $ &$1$\\
$\Z/10\Z$ & $5$ & $ 36 $ & $ 2 $ & $ 4 $ & $ 1 $ & $ 4 $ & $ -3 $ & $ 5 $ &$\frac45$\\
$\Z/2\Z\times \Z/6\Z$ & $3$ & $ 24 $ & $ 3 $ & $ 3 $ & $ \frac32 $ & $ 3 $ & $ -\frac32 $ & $ \frac92 $ &$1$\\
$\Z/12\Z$ & $3$ & $ 48 $ & $ 3 $ & $ 4 $ & $ \frac{5}{2} $ & $ 4 $ & $ -\frac32 $ & $ \frac{13}2 $ &$\frac73$ \\
\bottomrule\\
\end{tabular}
\caption{Families of elliptic curves with torsion subgroup $T$ with an odd prime degree $\ell$ isogeny.}
\label{table:oddprime}
\end{table}

 \begin{table}[!ht]
\centering
\begin{tabular}{ccccccccccc}
\toprule
$T$ & $\deg \Delta$ & $u_+^{(1)}$ & $u_+^{(2)}$ & $u_-^{(1)}$ & $u_-^{(2)}$&$v_+^{(2)}$ & $v_-^{(2)}$ & $\mu$ & $\sigma^2$ &$\rho(1)$\\
\midrule
$\Z/2\Z$ &$6$ & $1$ & $0$ & $1$ & $0$& $0$ & $0$ & $0$ & $2$ &$\frac{1}{2}$ \\
$\Z/4\Z$ & $12$ & $1$ & $1$ & $0$ & $1$& $\frac 12$ & $1$ & $\frac 12$ & $\frac 52$ &$1$\\
$\Z/6\Z$ & $ 12 $ & $ 2 $ & $ 0 $ & $ 2 $ & $ 0 $ & $ 0 $ & $ 0 $ & $ 0 $ & $ 4 $ &$1$\\
$\Z/8\Z$ & $ 24 $ & $ 1 $ & $ 2 $ & $ 0 $ & $ 2 $ & $ \frac32 $ & $ \frac32 $ & $ 1 $ & $ 4 $ &$\frac{7}{4}$\\
$\Z/10\Z$ & $ 36 $ & $ 3 $ & $ 0 $ & $ 3 $ & $ 0 $ & $ 0 $ & $ 0 $ & $ 0 $ & $ 6 $ &$\frac{3}{2}$\\
$\Z/12\Z$ & $ 48 $ & $ 2 $ & $ 2 $ & $ 0 $ & $ 3 $ & $ 2 $ & $ \frac52 $ & $ \frac32 $ & $ \frac{13}{2} $ &$\frac{11}{4}$\\
\bottomrule\\
\end{tabular}
\caption{Families of elliptic curves with torsion subgroup $T$ with a unique $2$-isogeny.}\label{table:twounique}
\end{table}

\begin{table}[!ht]
\centering
\resizebox{\linewidth}{!}{%
\begin{tabular}{ccccccccccccc}
\toprule
$T$ &$\substack{ x\text{-coordinate of the}\\ \text{generator of }\ker\phi}$& $\deg \Delta$ & $u_+^{(1)}$ & $u_+^{(2)}$ & $u_-^{(1)}$ & $u_-^{(2)}$&$v_+^{(2)}$ & $v_-^{(2)}$ & $\mu$ & $\sigma^2$ &$\rho(1)$&$\rho(2)$\\
\midrule
$\Z/2\Z\times \Z/2\Z$ & $-3(t+1)$& $ 6 $ & $ 0 $ & $ 1 $ & $ 2 $ & $ 0 $ & $ 1 $ & $ 0 $ & $ -1 $ & $ 3 $& $0$ &$\frac{3}{2}$\\
$\Z/2\Z\times \Z/2\Z$ & $3(2t-1)$& $ 6 $ & $ 0 $ & $ 1 $ & $ 2 $ & $ 0 $ & $ \frac12 $ & $ 0 $ & $ -\frac32 $ & $ \frac 52 $ & $-\frac{1}{2}$ &$0$\\
$\Z/2\Z\times \Z/2\Z$ & $-3(t-2)$& $ 6 $ & $ 0 $ & $ 1 $ & $ 2 $ & $ 0 $ & $ \frac12 $ & $ 0 $ & $ -\frac32 $ & $ \frac52 $ & $-\frac{1}{2}$&$0$\\
$\Z/2\Z \times \Z/4\Z$ & $-6 (t^2 + 8 t + 8)$& $ 12 $ & $ 0 $ & $ 1 $ & $ 2 $ & $ 1 $ & $ \frac12 $ & $ 1 $ & $ -\frac52 $ & $ \frac72 $ & $-1$&$-\frac{3}{4}$\\
$\Z/2\Z \times \Z/4\Z$ & $3 (t^2 + 8 t - 16)$ & $ 12 $ & $ 0 $ & $ 1 $ & $ 2 $ & $ 1 $ & $ 1 $ & $ \frac 12 $ & $ -\frac 32 $ & $ \frac 72 $ & $-\frac{1}{4}$&$\frac{9}{8}$\\
$\Z/2\Z \times \Z/4\Z$ &$3 (t^2 + 8 t + 32)$ & $ 12 $ & $ 0 $ & $ 2 $ & $ 0 $ & $ 2 $ & $ 1 $ & $ \frac 32 $ & $ -\frac{1}2 $ & $ \frac52 $ & $\frac{1}{4}$&$\frac{15}{8}$\\
$\Z/2\Z \times \Z/6\Z$& $\substack{-3 (759 t^4 - 228 t^3 - 30 t^2 + 12 t - 1)}$&$ 24 $ & $ 0 $ & $ 2 $ & $ 4 $ & $ 0 $ & $ \frac32 $ & $ 0 $ & $ -\frac52 $ & $ \frac{11}{2} $ & $-\frac{1}{2}$&$\frac{3}{2}$\\
$\Z/2\Z \times \Z/6\Z$& $\substack{6 (183 t^4 - 36 t^3 - 30 t^2 + 12 t - 1)}$ &$ 24 $ & $ 0 $ & $ 2 $ & $ 4 $ & $ 0 $ & $ \frac32 $ & $ 0 $ & $ -\frac52 $ & $ \frac{11}{2} $ & $-\frac{1}{2}$&$\frac{3}{2}$\\
$\Z/2\Z \times \Z/6\Z$& $\substack{3 (393 t^4 - 156 t^3 + 30 t^2 - 12 t + 1)}$& $ 24 $ & $ 0 $ & $ 2 $ & $ 4 $ & $ 0 $ & $ \frac32 $ & $ 0 $ & $ -\frac52 $ & $ \frac{11}{2} $ & $-\frac{1}{2}$&$\frac{3}{2}$\\
$\Z/2\Z \times \Z/8\Z$& $\substack{-6 (t^8 + 16 t^7 + 96 t^6 + 256 t^5 + 128 t^4 \\- 1024 t^3 - 3072 t^2 - 4096 t - 2048)}$ & $ 48 $ & $ 0 $ & $ 2 $ & $ 2 $ & $ 3 $ & $ 2 $ & $ \frac52 $ & $ -\frac52 $ & $ \frac{13}2 $& $-\frac{1}{4}$ &$\frac{21}{8}$\\
$\Z/2\Z \times \Z/8\Z$&$\substack{3 (t^8 + 16 t^7 + 96 t^6 + 256 t^5 - 256 t^4 \\- 4096 t^3 - 12288 t^2 - 16384 t - 8192)}$&$ 48 $ & $ 0 $ & $ 2 $ & $ 2 $ & $ 3 $ & $ 2 $ & $ \frac 52 $ & $ -\frac 52 $ & $ \frac {13}2 $ & $-\frac{1}{4}$&$\frac{21}{8}$\\
$\Z/2\Z \times \Z/8\Z$ &$\substack{3 (t^8 + 16 t^7 + 96 t^6 + 256 t^5 + 512 t^4 \\+ 2048 t^3 + 6144 t^2 + 8192 t + 4096)}$ &$ 48 $ & $ 0 $ & $ 3 $ & $ 0 $ & $ 4 $ & $ 2 $ & $ 4 $ & $ -2 $ & $ 6 $ & $0$&$3$\\
\bottomrule\\
\end{tabular}}
\caption{Families of elliptic curves with torsion subgroup $T$ with a choice of $2$-isogeny.}\label{table:twochoice}
\end{table}

\subsection{Cyclic $4$-isogeny}
We will show that the family of all elliptic curves with a cyclic $4$-isogeny is admissible under~\ref{Atwists}.
Take 
 \[
 f(t)=t^2-3
 \quad\text{ and }\quad
 g(t)=-t^2+2,
 \]
 so $\tau=1$. Then
 \[
 A(a,b)=a^2-3b^2\quad\text{ and }\quad
 B(a,b)=b(-a^2+2b^2).
 \]
 From which we can compute
 \[
 \Delta(a,b)=-a^4 (2 a - 3 b) (2 a + 3 b).
 \]
 A rational elliptic curve admits a rational cyclic isogeny of degree $4$ if and only is isomorphic over $\Q$ to $E_{a,b}$ for $(a,b)\in \cT_{2,1}$, see~\cite[Section~4]{pomerance2021elliptic}. By~\cite[Proposition~4]{pomerance2021elliptic}, up to removing a density $O(N^{-\xi})$ subfamily, there is a one-to-one correspondence between $\cG$ and the family of rational elliptic curves that admit a degree $4$ cyclic rational isogeny.
 
 The curve $E_{a,b}$ has a rational $2$-torsion point with coordinates $(b,0)$ and so, applying Velu's formula, $E_{a,b}$ is isogenous to $E_{a,b}'$ with 
 \[
 A'(a,b)=-4a^2 - 3b^2\quad\text{ and }\quad
 B'(a,b)=-2b(2a + b)(2a - b).
\]
 Then we find that
\[
 \Delta'(a,b)=64a^2(2a + 3b)^2(2a - 3b)^2,
 \]
 so we can take $D_+=(2a + b)(2a - b)$ and $D_-=a^2$. The constants from the computations are listed in Table~\ref{table:4cyclic}.

 \begin{table}[!ht]
\centering
\begin{tabular}{ccccccccccc}
\toprule
$\ell$ & $\deg \Delta$ & $u_+^{(1)}$ & $u_+^{(2)}$ & $u_-^{(1)}$ & $u_-^{(2)}$&$v_+^{(2)}$ & $v_-^{(2)}$ & $\mu$ & $\sigma^2$& $\rho(1)$ \\
\midrule
$2$ & $6$ & $2$ & $0$ & $0$ & $1$& $0$ & $\frac 12$ & $\frac32$ & $\frac 52$ & $\frac{7}{4}$\\
\bottomrule\\
\end{tabular}
\caption{Family of elliptic curves with a cyclic degree $4$ isogeny.}\label{table:4cyclic}
\end{table}

\subsection{A family with a $7$-isogeny}\label{subsec:7}
We will work with a family of elliptic curves with a $7$-isogeny and show that it is admissible under~\ref{Ah1}.
Let 
 \[
 f(t)=-3(t^{2} + 13t + 49)(t^{2} + 5t + 1)\quad\text{ and }\quad
 g(t)=2(t^{2} + 13t + 49)(t^{4} + 14t^{3} + 63t^{2} + 70t- 7)
 \]
 so $m=1$.
 In the notation of~\ref{Ah1}, $s(t)=(t^{2} + 5t + 1)^2$, so $\deg s<(24m)/(m+2)$. 
 The $j$-invariant of $\cE$ is $(t^2+13t+49)(t^2+5t+1)^3/t$. By~\cite[Table~4]{Maier}, for each $t\in \Q_{\neq 0}$ the rational elliptic curve $\cE_t$ admits a rational $7$-isogeny. Moreover, every rational elliptic curve with a rational $7$-isogeny is isomorphic over $\overline{\Q}$ to $\cE_t$ for exactly one $t\in \Q$. The fact that the $t\in \Q$ is unique follows from the fact that no rational elliptic curve admits two different rational $7$-isogenies~\cite[Proposition~2.2.6]{MolnarVoight}. 
 Then we find that
 \begin{gather*}
 A(a,b)=-3(a^{2} + 13ab + 49b^2)(a^{2} + 5ab + b),\\
 B(a,b)=2(a^{2} + 13ab + 49b^2)(a^{4} + 14a^{3}b + 63a^{2}b^2 + 70ab^3- 7b^4),
 \end{gather*}
 with
 \[
 \Delta(a,b)=-2^8\cdot 3^6 ab^7 (a^2 + 13 ab + 49ab)^2.
 \]
 By~\cite[Table~7]{Maier}, $E_{a,b}$ is $7$-isogenous to the elliptic curve $E_{a,b}'$ obtained via the involution $t\mapsto 49/t$ with
 \[
 \Delta'(a,b)=-2^8\cdot 3^6\cdot 7^6 a^7b (a^2 + 13 ab + 49ab)^2.
 \]
 We can take
 $D_+(a,b)=a$ and $D_-(a,b)=b$, and thus $u_+=u_-=1$. Moreover, $6B(1,0)=12$ and $6B(0,1)=-4116$, which are not squares in $\Q$, the splitting field of $D_+$ and $D_-$. We find that $v(D_+)=v(D_-)=1/2$, $\mu=0$, and $\sigma^2=1$.
\subsection{A family with a $13$-isogeny}\label{subsec:13}
We start with defining a curve that would be $13$-isogeneous to the family we are interested in.
Let $\cE':y^2=x^3+f'(t)x+g'(t)$ with
 \[
 f'(t)=-27(t^2 + 5t + 13)(t^2 + 6t + 13)(t^4 + 7t^3 + 20t^2 + 19t + 1),
 \]
 \[
 g'(t)=54(t^2 + 5t + 13)(t^2 + 6t + 13)^2(t^6 + 10t^5 + 46t^4 + 108t^3 + 122t^2 + 38t - 1).
 \]
 The parametrization is taken from~\cite[Table~4]{Maier}. For each $t\in \Q_{\neq 0}$, the rational elliptic curve $\cE'$ admits a $13$-isogeny $\phi':\cE'\rightarrow \cE$ over $\Q(t)$. 
 We consider the isogeny $\phi:\cE\rightarrow\cE'$ dual to $\phi'$, then $\cE$ is admissible with respect to $\phi$ under type~\ref{Ah0} with $m=2$. Recall $H_0\leq H$, so this is a family of elliptic curves with a rational $13$-isogeny and each curve in $\cF_0(N)$ has height bounded by $N$. 
 
 Since no rational elliptic curve admits two different rational $13$-isogenies, distinct $t\in\Q$ give rise to $\cE_t$ in different $\Q$-isomorphism classes as long as $\Disc(\cE_t)\neq 0$.
 Then \[\Delta(a,b)=-2^8\cdot 3^{12}ab^{13}(a^2 + 5ab + b^2)^2(a^2 + 6ab + 13b^2)^3.\]
 By~\cite[Table~7]{Maier}, each $E_{a,b}$ is $13$-isogenous to an elliptic curve with 
 \[
 \Delta'(a,b)=-2^{8}\cdot 3^{12}\cdot 13^{6} a^{13}b(a^2 + 5ab + b^2)^2(a^2 + 6ab + 13b^2)^3.
 \]
 Whence, $D_+=a$ and $D_-=b$. So, $u_+=u_-=1$, and $v_+=1/2$, $v_-=1$. We can apply Theorem~\ref{theorem:mainhom} with $\mu=1/2$ and $\sigma^2=3/2$. 

 \begin{table}[!ht]
\centering

\begin{tabular}{ccccccccc}
\toprule
$\ell$ & $\deg \Delta$ & $u_+$ & $u_-$ & $v_+$ & $v_-$&$\mu$ &$\sigma^2$&$\rho(1)$ \\
\midrule
$7$ & $12$ & $1$ & $1$ & $\frac 12$ & $\frac 12$ &$0$& $1$&$\frac{18}{7}$\\
$13$ & $24$ & $1$ & $1$ & $\frac 12$ & $1$ &$\frac 12$& $\frac 32$&$\frac{66}{13}$\\
\bottomrule\\
\end{tabular}
\caption{The subfamilies we considered of elliptic curves with a degree $\ell$ isogeny.}\label{todd}
\end{table}

\bibliographystyle{abbrv}
\bibliography{biblio}
\end{document}